\documentclass[12 pt, a4paper, oneside]{amsart}

%%%%%%%%%%%%%%%%%% Pacotes: Básicos %%%%%%%%%%%%%%%
\usepackage[utf8]{inputenc}
\usepackage{cmap}
\usepackage[T1]{fontenc}
\usepackage[top=3cm,bottom=2cm,right=2cm,left=3cm]{geometry}
\usepackage{color}
\usepackage{indentfirst}
\usepackage{lmodern} \normalfont
\DeclareFontShape{T1}{lmr}{bx}{sc} { <-> ssub * cmr/bx/sc }{}
\usepackage{amsmath}
\usepackage{amsfonts}
\usepackage{amssymb}
\usepackage{amsrefs}
%\addbibresource{preprint.bib}

%%%%%%%%%%%%%%%%% Pacotes: Math %%%%%%%%%%%%%%%%%%

\usepackage{amsmath}
\usepackage{amsthm,amsfonts,mathrsfs,amsfonts}
\usepackage{slashed}
\usepackage{breqn}
\usepackage{pb-diagram}
\usepackage{bbold}
\usepackage{bbm}
\usepackage[all]{xy}
\usepackage{times}
\usepackage{amsaddr}
\usepackage{microtype}
\usepackage{enumerate}
\usepackage[super]{nth}
\usepackage{multicol}
\usepackage{tikz-cd}

%%%%%%%%%%%%%%%%%%%%%%%%%%%%% Configurações: links %%%%%%%%%%%%%%%%%%%%%%%%%%%%%
\usepackage{url}

%\setup{linkbordercolor=orange}

%%%%%% Pacotes que estão dando problema para Hq
% \usepackage{hyperref}

% \usepackage{breakurl}

% \hypersetup{
% hypertexnames=false,
% }
% \usepackage{tikz-cd}
% \usetikzlibrary{arrows}

%%%%%%%%%%%%%%%%%%%%%%%%%%%% Configurações: amsthm %%%%%%%%%%%%%%%%%%%%%%%%%%%%

\newtheorem{theorem}{Theorem}[section]
\newtheorem{corollary}[theorem]{Corollary}
\newtheorem{definition}[theorem]{Definition}
\newtheorem{example}[theorem]{Example}
\newtheorem{lemma}[theorem]{Lemma}
\newtheorem{remark}[theorem]{Remark}
\newtheorem*{remark*}{Remark}
\newtheorem{prop}[theorem]{Proposition}

%%%%%%%%%%%%%%%%%%%%%%%%%%%% Configurações: Math Operator %%%%%%%%%%%%%%%%%%%%%

\DeclareMathOperator{\vol}{vol}
\DeclareMathOperator{\Cl}{Cl}
\DeclareMathOperator{\tad}{\widetilde{Ad}}
\DeclareMathOperator{\Gl}{Gl}
\DeclareMathOperator{\SO}{SO}
\DeclareMathOperator{\spin}{Spin}
\DeclareMathOperator{\sptc}{Sp}
\DeclareMathOperator{\im}{Im}
\DeclareMathOperator{\real}{Re}
\DeclareMathOperator{\SU}{SU}
\DeclareMathOperator{\End}{End}
\DeclareMathOperator{\n}{\nabla}
\DeclareMathOperator{\nperp}{\nabla^\perp}
\DeclareMathOperator{\Hom}{Hom}
\DeclareMathOperator{\dg}{deg}

\DeclareMathOperator{\imag}{Im}
\DeclareMathOperator{\ad}{ad}
\DeclareMathOperator{\Span}{Span}
\DeclareMathOperator{\tr}{tr}
\DeclareMathOperator{\Sym}{Sym}
\DeclareMathOperator{\fueter}{\slashed{D}_A}
\DeclareMathOperator{\dirac}{\slashed{D}}
\DeclareMathOperator{\Stab}{Stab}
\DeclareMathOperator{\Aut}{Aut}

\DeclareMathOperator{\Lie}{Lie}
\DeclareMathOperator{\Diff}{Diff}
\DeclareMathOperator{\ident}{Id}

\DeclareMathOperator{\dv}{div}

\newcommand{\R}{\mathbb{R}}
\newcommand{\Langle}{\langle\langle}
\newcommand{\Rangle}{\rangle\rangle}

\newcommand{\gt}{\mathrm{G}_2}

%%%% Comandos %%%%
\newcommand{\qforq}{\quad \text{for} \quad}
\newcommand{\qandq}{\quad \text{and} \quad}
\newcommand{\qwithq}{\quad \text{with} \quad}
\newcommand{\qwhereq}{\quad \text{where} \quad}

%%%%%%%%%%%%%%%%%%%%%%%%%%%% Toc setting %%%%%%%%%%%%%%%%%%%%%%%%%%%%%%%%%

%\let\oldtocsection=\tocsection
 
%\let\oldtocsubsection=\tocsubsection
%\renewcommand{\tocsection}[2]{\hspace{0em}\oldtocsection{#1}{#2}}
%\renewcommand{\tocsubsection}[2]{\hspace{2em}\oldtocsubsection{#1}{#2}}

\makeatletter
\def\@tocline#1#2#3#4#5#6#7{\relax
  \ifnum #1>\c@tocdepth % then omit
  \else
    \par \addpenalty\@secpenalty\addvspace{#2}%
    \begingroup \hyphenpenalty\@M
    \@ifempty{#4}{%
      \@tempdima\csname r@tocindent\number#1\endcsname\relax
    }{%
      \@tempdima#4\relax
    }%
    \parindent\z@ \leftskip#3\relax \advance\leftskip\@tempdima\relax
    \rightskip\@pnumwidth plus4em \parfillskip-\@pnumwidth
    #5\leavevmode\hskip-\@tempdima
      \ifcase #1
       \or\or \hskip 2em \or \hskip 3em \else \hskip 4em \fi%
      #6\nobreak\relax
    \dotfill\hbox to\@pnumwidth{\@tocpagenum{#7}}\par
    \nobreak
    \endgroup
  \fi}
\makeatother

%%%%%%%%%%%%%%%%%%%%%%%%%%%%% Begin Preprint %%%%%%%%%%%%%%%%%%%%%%%%%%%%%
\begin{document}

\title{The Weitzenböck Formula for the Fueter-Dirac Operator}
\author{Andrés J. Moreno \quad \& \quad Henrique N. S\'a Earp}
\address{University of Campinas (Unicamp)}
\date{\today}

\begin{abstract}
We find a Weitzenböck formula for the Fueter-Dirac operator which controls infinitesimal deformations of an associative submanifold in a $7$--manifold
with a $\gt$--structure. We establish a vanishing theorem to conclude rigidity under some
positivity assumptions on curvature, which are particularly mild in the nearly parallel case. As applications, we find a different proof of rigidity for one of Lotay's associatives in the round $7$-sphere from those given by Kawai \cites{kawai2013submanifolds,kawai2014deformations}. We also provide simpler proofs of previous results by Gayet for the Bryant-Salamon metric \cite{gayet2014smooth}. Finally, we obtain an original example  of a rigid associative in a compact manifold with locally conformal calibrated $\gt$-structure obtained by Fernandez-Fino-Raffero \cite{fernandez2016locally}.   
\end{abstract}

\maketitle

%\addtocontents{toc}{\protect\begin{multicols}{2}}
%

\tableofcontents

\section*{Introduction}
The theories of Riemannian holonomy and calibrated geometry are related by the fact each Riemannian manifold with  reduced holonomy  is equipped with a calibration.  In particular, a
reduction to the  exceptional holonomy group $\gt$ can only occur in real
dimension $7$, in which case the relevant calibrations are a $3$--form $\varphi$ and its Hodge dual $4$--form $\psi:=\ast\varphi$, and
their calibrated submanifolds  are called \emph{associative} and \emph{coassociative},
respectively (cf. Definition \ref{def: associative submanifold}). In this article, we propose a computational tool to study the deformation theory of associative submanifolds, in  favourable cases of interest.
 
Let $(M^7,\varphi)$ be a smooth manifold with $\gt$--structure. In \cite{mclean1996deformations}, McLean proved that a class in the moduli space of associative deformations corresponds to a harmonic spinor of a twisted Dirac operator, under  the torsion-free hypothesis $\nabla\varphi=0$. Then, Akbulut and Salur \cite{akbulut2008calibrated}, \cite{akbulut2008deformations} generalized McLean's theorem for a general $\gt$-structure, identifying the tangent space at an associative submanifold $Y^3$ in $(M^7,\varphi)$ with the kernel of  
\begin{equation}\label{dirac_operator}
\fueter: \Omega^0(Y,NY)\rightarrow \Omega^0(Y,NY)
\end{equation}
where $A=A_0+a$, for $A_0$  the induced connection on $NY$ and some $a\in \Omega^1(Y,\ad(NY))$.
We obtain a Weitzenböck formula for the operator \eqref{dirac_operator},
that is,  a relation between the second-order elliptic square $\fueter^2$ and the trace Laplacian $\nabla^\ast\nabla$ of the induced Levi-Civita connection
on $NY$. Under suitable positivity assumptions on curvature,  this implies
\emph{rigidity}, i.e., that $Y$\ has ``essentially'' no infinitesimal associative deformations, in the following sense.  Denote by $G:=\Stab(\varphi)\subset\Aut(M)$ the group of global automorphisms preserving $\varphi$. The infinitesimal associative deformations of $Y$ consist of: 
%\newpage
\begin{enumerate}[(i)]
        \item  \emph{trivial} deformations given by the action of $G$ on $Y$ (see  \cite{kawai2014deformations} and \cite{moriyama2013deformations});
        \item  \emph{non-trivial} deformations, which depend intrinsically on the geometry of the associative submanifold.  
\end{enumerate}

For instance, in \cite{kawai2014deformations}, an associative submanifold is considered rigid if all infinitesimal associative deformations are trivial; in the  particular case of the homogeneous space $M=S^7$, the symmetry group of $\varphi$ is $G=\spin(7)$. On the other hand, Gayet \cite{gayet2014smooth} and McLean \cite{mclean1996deformations} consider a generic $\gt$--structure, i.e., without symmetries. So, $G$ is $0$--dimensional and  $Y$ is rigid if the space of nontrivial infinitesimal deformation vanishes. %In the present paper, we will be interested exclusively in rigidity of type (ii), i.e.,  \emph{vis-\`a-vis} nontrivial deformations.  

%In \cite{gayet2014smooth} D. Gayet gives a Weitzenböck form for the deformation operator using the Ricci identity. Instead, we follow the remarks made on \cite{corti1207g2} (section 5) and \cite{walpuski2012g2} (section 2.2).\\
The exposition is organised as follows. Section $1$ is a proactive background review. We apply results from $4$-dimensional spin geometry to obtain the explicit identification 
$$
  NY\otimes_{\mathbb{R}}\mathbb{C}\cong S^+\otimes_{\mathbb{C}} S^-,
$$  
between the normal bundle of $Y$ and {a spinor bundle} $S=S^+\oplus S^-\to Y$, in order to describe  the Fueter-Dirac operator in detail. We then  deduce some useful identities in $\rm G_2$--geometry, following Karigiannis  \cite{karigiannis2009flows}.

In Section \ref{sec: General Fueter-Dirac Weitzenbock}, we calculate the general Weitzenböck formula for the operator \eqref{dirac_operator}:
\begin{align}\label{fueterformula}
        \fueter^2(\sigma)
        &=\nabla^\ast\nabla\sigma+\frac{1}{4}k\cdot\sigma+\overline{\rho}(F^-)\sigma
        +P_1(\sigma)+P_2(\sigma)+P_3(\sigma)
\end{align}
where $P_1$, $P_2$ and $P_3$ are first order differential operators on $NY$, involving the torsion of the $\gt$--structure, and $\nabla^\ast\nabla$ is the connection Laplacian 
$$
\nabla^\ast\nabla n=-\sum \n_i^\perp\n_i^\perp n -\n_{\nabla_ie_i}^\perp n
$$
in a global frame  $\{e_i\}$ on the associative submanifold $Y$. The scalar curvature of $Y$ is denoted by $k$, and the bundle map $$\overline{\rho}: \Omega^2(Y, \End(S^-))\rightarrow \End(S^+\otimes S^-)$$is defined by 
\begin{equation}\label{eq: rho barra}
        \overline{\rho}(F^-)
        :=
        \overline{\rho}(\sum (e_i\wedge e_j)\otimes F^-_{ij})=\sum \Gamma_0(e_i)\Gamma_0(e_j)\otimes F^-_{ij},
\end{equation}
where $F^-\in \Omega^2(Y,\End(S^-))$ is the curvature of a connection on $S^-$ and $\Gamma_0: TY\rightarrow \End(S^+)$ is the Spin structure on $Y$.\\
In Section \ref{sec: nearly parallel case}, we specialise to the \emph{nearly parallel} case, in which $d\varphi$ and $\psi$ are collinear and the formula \eqref{fueterformula} simplifies significantly. For a generic nearly parallel $\gt$--structure, we obtain a vanishing theorem (Theorem \ref{rigidity_theorem}) to conclude rigidity under suitable intrinsic geometric conditions on $Y$. As immediate applications,  we propose alternative proofs of rigidity for the known cases of an associative $\SU(2)$-orbit $3$-sphere for Lotay's cocalibrated $\gt$-structure on $S^7$ studied by Kawai \cites{kawai2014deformations,kawai2013submanifolds,lotay2012associative} and 
  the associatives $S^3\times\{0\}$ of the Bryant-Salamon metric studied by Gayet \cite{gayet2014smooth}.

Finally, we obtain a hitherto unstudied rigid associative submanifold (Corollary \ref{cor: rigid assoc Fino-Raffero}) in a compact manifold $S$ with locally conformal calibrated $\gt$-structure obtained from the $3$-dimensional complex Heisenberg group by Fernández-Fino-Raffero  \cite{fernandez2016locally}. In view of the systematic nature of their construction, our method lends itself to the production of many more such examples. 

\bigskip
\noindent\textbf{Acknowledgements.} We are grateful to Selman Akbulut, Jason Lotay and Kotaro Kawai for their constructive input to the preprint version. HSE would like to thank Dietmar Salamon for introducing him to the idea of a Weitzenb\"ock formula in this context, and the Simons Center for Geometry and Physics in Stony Brook, for hosting that conversation in 2014. We thank the anonymous referee for several insightful contributions. 

AM was supported by Coordenação de Aperfeiçoamento de Pessoal de Nível Superior – Brasil (CAPES) – Finance Code 001, grant 1547442, and by Brazilian National Research Council
(CNPq), grant 140689/2017-6. HSE
was supported by S\~ao Paulo Research Foundation (Fapesp), grant 2014/24727-0, and by CNPq Productivity grant 312390/2014-9.

%\newpage
\section{Spin geometry, $\gt$--structures and the Fueter-Dirac operator}\label{G2section}

\subsection{Spin geometry of four dimensional space}

We begin by recalling some background and fixing notation, so the reader
familiar with e.g. \cite[Chapter 2]{salamon2000spin} and  \cite[Chapter 3]{donaldson1990geometry} may just skim through upon a first read. 

On an inner product space  $(V^n,\langle \cdot,\cdot \rangle)$, the Clifford algebra $\Cl(V)$ is a $2^n$-dimensional  associative algebra with unit $1$, generated by the elements of some orthonormal basis $e_1,...,e_n$
of $V$ with relations
$$
e_i^2=-1, \quad e_ie_j=-e_je_i
\qforq
i\neq j.
$$ 
A basis for $\Cl(V)$ is given by $$e_0=1,\quad e_I=e_{i_1}\cdots e_{i_k}$$ where $I=\{i_1,...,i_k\}\subset \{1,...,n\}$ for $i_1<\cdots <i_k$, and
$\Cl(V)$ admits
a natural involution
$$
\alpha: \Cl(V)\rightarrow \Cl(V)
$$
defined by $\alpha(x)=\widetilde{x}:=\sum_I \epsilon_Ix_Ie_I$, where $\epsilon_I:=(-1)^{k(k+1)/2}$ and  $x_I\in \mathbb{R}$ are the components of $x$ in the basis $\{e_I\}$. 
Denote by $\dg(e_I):=|I|$ the degree of an element $e_I\in\Cl(V)$, by $\Cl_k(V)$  the subset of elements of degree $k$, and by $\Cl^0(V)$ and $\Cl^1(V)$ the subspaces of elements of even and odd degree, respectively. 

\begin{example} 
On $V=\mathbb{R}^4$ with the Euclidean inner product, we have $\Cl(V)=M_2(\mathbb{H})$, the $2\times 2$ matrices with entries in the quaternions $\mathbb{H}=\langle i,j,k\rangle$. The elements of $\Cl(V)$ are $1$, $e_i$, $\{e_ie_j\}_{i<j}$, $\{e_ie_je_k\}_{i<j<k}$ and $e_1e_2e_3e_4$, with $i,j,k=1,2,3,4$, with generators

$$e_1=\begin{pmatrix}
0 & 1 \\ -1 & 0
\end{pmatrix}, \, e_2=\begin{pmatrix}
0 & i \\ i & 0
\end{pmatrix}, \, e_3=\begin{pmatrix}
0 & j \\ j & 0
\end{pmatrix} \qandq e_4=\begin{pmatrix}
0 & k \\ k & 0
\end{pmatrix}$$
and the involution $\alpha(A)=A^\ast$ is the transpose conjugation. 
\end{example}
Denote the set of units of $\Cl(V)$ by $\Cl^\times(V)$. Considering the twisted adjoint representation  $\tad:\Cl^\times(V)\rightarrow \Gl(\Cl(V))$ given by 
$$
\tad(x)y=((x)^{0}-(x)^{1})y\widetilde{x},
$$
where $(x)^0\in \Cl^0(V)$ and $(x)^1\in \Cl^1(V)$ are the even and odd parts of $x$, respectively. We define the \emph{Spin group} of $V$:
$$\spin(V):=\{x\in \Cl^{0}(V)| \, \tad(x)V=V, \, x\widetilde{x}=1\}.$$
For $\dim V\geq 3$, $\spin(V)$ is a compact, connected and simply connected Lie group, fitting in a short exact sequence 
\cite[Lemma 4.25]{salamon2000spin} 
$$0\rightarrow \mathbb{Z}_2 \rightarrow \spin(V)\rightarrow \SO(V)\rightarrow 1.$$ 
In particular, the following results hold in  dimensions $3$ and $4$:

\begin{lemma}\cite[Lemma 4.4]{salamon2000spin}
For every $x\in \sptc(1)$, there is a unique orthogonal matrix $\xi_0(x)\in \SO(3)$, such that $\xi_0(x)y=xy\widetilde{x}$, for all $y\in \im(\mathbb{H})\cong \mathbb{R}^3$, and the map $\xi_0: \sptc(1)\rightarrow  \SO(3)$ is a surjective homomorphism with kernel $\{\pm 1\}$, hence 
$$
\SO(3)\cong \sptc(1)/\mathbb{Z}_2 
\qandq 
\spin(3)\cong \sptc(1).
$$  
\end{lemma}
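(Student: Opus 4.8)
The plan is to build the homomorphism $\xi_0$ directly out of quaternionic conjugation, compute its kernel by hand, establish surjectivity by exhibiting enough rotations, and finally translate the resulting isomorphism into the Clifford-algebraic language defining $\spin(3)$.

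For $x\in\sptc(1)$, so that $x\widetilde{x}=|x|^2=1$, consider the $\R$-linear map $c_x\colon\mathbb{H}\to\mathbb{H}$, $c_x(y)=xy\widetilde{x}$. It is an isometry since $|xy\widetilde{x}|=|x|\,|y|\,|\widetilde{x}|=|y|$, and it fixes $1$ because $x\widetilde{x}=1$; hence it preserves the orthogonal complement $\im(\mathbb{H})\cong\R^3$ of $\R\cdot 1$, where it restricts to an orthogonal transformation $\xi_0(x)\in\Or(3)$, manifestly the unique one with $\xi_0(x)y=xy\widetilde{x}$. Since $\sptc(1)\cong S^3$ is connected and $x\mapsto\det\xi_0(x)\in\{\pm1\}$ is continuous with value $1$ at $x=1$, we get $\xi_0(x)\in\SO(3)$ throughout. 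The homomorphism property is the identity $\xi_0(xx')y=(xx')y\,\widetilde{(xx')}=x(x'y\widetilde{x'})\widetilde{x}=\xi_0(x)\xi_0(x')y$.

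Next, $\xi_0(x)=\ident$ forces $xy=yx$ for all $y\in\im(\mathbb{H})$, hence for all $y\in\mathbb{H}=\R\oplus\im(\mathbb{H})$, so $x$ lies in the centre $\R$ of $\mathbb{H}$; with $|x|=1$ this gives $x=\pm1$, so $\ker\xi_0=\{\pm1\}$. For surjectivity I would note that for a unit $u\in\im(\mathbb{H})$ and $\theta\in\R$ the element $x=\cos(\tfrac{\theta}{2})+\sin(\tfrac{\theta}{2})u$ satisfies $\xi_0(x)u=u$ and, for $v$ a unit imaginary quaternion orthogonal to $u$, $\xi_0(x)v=\cos\theta\,v+\sin\theta\,(uv)$ by a short computation using $u^2=-1$ and $uv=-vu$; thus $\xi_0(x)$ is the rotation by $\theta$ about the axis $\R u$, and as $u$ and $\theta$ vary these exhaust $\SO(3)$. (Alternatively: $\xi_0$ has discrete kernel, so it is a local diffeomorphism with open image, and its image is compact hence closed, so it is all of the connected group $\SO(3)$.) The first isomorphism theorem now gives $\SO(3)\cong\sptc(1)/\mathbb{Z}_2$.

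It remains to recognise $\sptc(1)$ as $\spin(3)$. For this I would fix the algebra isomorphism $\Cl^0(\R^3)\xrightarrow{\ \sim\ }\mathbb{H}$ sending $1\mapsto1$, $e_2e_3\mapsto i$, $e_3e_1\mapsto j$, $e_1e_2\mapsto k$, and check that under it the twisted adjoint action $\tad(x)$ on $V=\R^3\cong\im(\mathbb{H})$ becomes exactly $c_x$ while the constraint $x\widetilde{x}=1$ becomes $|x|=1$; since $\spin(3)=\{x\in\Cl^0(\R^3): \tad(x)V=V,\ x\widetilde{x}=1\}$ and, by the above, $\tad(x)V=V$ is automatic once $|x|=1$, this identifies $\spin(3)$ with $\sptc(1)$. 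The only place demanding care is precisely this dictionary: matching the signs in the definitions of $\alpha$ and of $\tad$ against quaternion conjugation so that ``$x\widetilde{x}=1$'' in $\Cl^0(\R^3)$ translates cleanly to ``$|x|=1$'' in $\mathbb{H}$; once the conventions are pinned down, the remainder is the routine bookkeeping sketched here.
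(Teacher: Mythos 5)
Your argument is correct and is the standard one; the paper itself gives no proof of this lemma, merely citing \cite{salamon2000spin}, and your sketch (conjugation preserves $\im(\mathbb{H})$, connectedness forces determinant $1$, the centre computation for the kernel, the axis--angle formula for surjectivity, and the dictionary $\Cl^0(\R^3)\cong\mathbb{H}$ identifying $\tad$ with $c_x$ and $x\widetilde{x}=1$ with $|x|=1$) is exactly the textbook route the citation points to. The one point you rightly flag as delicate — that $\tad(x)V=V$ is automatic — does check out, since $V=\im(\mathbb{H})\cdot\omega$ for the central volume element $\omega=e_1e_2e_3$, so $xy\widetilde{x}=(xv\bar{x})\omega$ stays in $V$.
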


\begin{lemma}\cite[Lemma 4.6]{salamon2000spin}
For every $x,y\in \sptc(1)$, there is a unique orthogonal matrix $\eta_0(x,y)\in \SO(4)$, such that $\eta_0(x,y)z=xz\widetilde{y}$, for all $z\in \mathbb{R}^4\cong \mathbb{H}$, and the map $\eta_0: \sptc(1)\times \sptc(1) \rightarrow  \SO(4)$ is a surjective homomorphism with kernel $\{\pm (1,1)\}$, hence $$\SO(4)\cong \sptc(1)\times \sptc(1)/\mathbb{Z}_2 \ \ \ \text{and} \ \ \ \spin(4)\cong \sptc(1)\times \sptc(1)$$ 
\end{lemma}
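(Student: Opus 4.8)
The plan is to realise $\eta_0$ concretely as combined left and right quaternionic multiplication, and then invoke the standard ``surjective Lie homomorphism with discrete kernel'' argument. Recall that for $y\in\sptc(1)$ the involution $\widetilde{\ \cdot\ }$ restricts to quaternionic conjugation, so $\widetilde y=\overline y=y^{-1}$. For $x,y\in\sptc(1)$ I would first define the $\R$-linear map $L_{x,y}\colon\mathbb{H}\to\mathbb{H}$, $z\mapsto xz\widetilde y$, which is the unique endomorphism with the prescribed action since a linear map is determined on a basis. As $|xz\widetilde y|=|x|\,|z|\,|y|=|z|$, it lies in $\Or(4)$. To upgrade this to $\SO(4)$, note that $\sptc(1)\times\sptc(1)$ is connected and $(x,y)\mapsto L_{x,y}$ is continuous with $L_{1,1}=\ident$, so its image lies in the identity component $\SO(4)$; this also justifies writing $\eta_0(x,y):=L_{x,y}$.

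Next I would check that $\eta_0$ is a homomorphism by the one-line computation $L_{x_1,y_1}\bigl(L_{x_2,y_2}(z)\bigr)=x_1(x_2z\widetilde{y_2})\widetilde{y_1}=(x_1x_2)z\,\widetilde{y_1y_2}$, using associativity of $\mathbb{H}$ together with $\widetilde{y_2}\,\widetilde{y_1}=\widetilde{y_1y_2}$. For the kernel, $\eta_0(x,y)=\ident$ forces $x\widetilde y=L_{x,y}(1)=1$, hence $y=x$; then $xzx^{-1}=z$ for every $z\in\mathbb{H}$ says $x$ is central in $\mathbb{H}$, i.e. $x\in\R\cap\sptc(1)=\{\pm1\}$. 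Therefore $\ker\eta_0=\{\pm(1,1)\}$.

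The main obstacle is surjectivity, which I would obtain by a dimension count: $\dim\bigl(\sptc(1)\times\sptc(1)\bigr)=6=\dim\SO(4)$ and $\ker\eta_0$ is finite, so $(d\eta_0)_{(1,1)}$ has trivial kernel (being the Lie algebra of $\ker\eta_0$), hence is an isomorphism; thus $\eta_0$ is a local diffeomorphism near $(1,1)$ and its image contains a neighbourhood of $\ident$ in $\SO(4)$. An open subgroup of the connected group $\SO(4)$ is the whole group, so $\eta_0$ is onto. One can also argue without smoothness, using the previous lemma: given $R\in\SO(4)$, composing with a suitable $\eta_0(a,b)$ reduces to the case $R(1)=1$, so that $R$ restricts to an element of $\SO(3)$ on $\im\mathbb{H}$; by the previous lemma this restriction is $z\mapsto xz\widetilde x$ for some $x\in\sptc(1)$, and $z\mapsto xz\widetilde x$ also fixes $1$, whence $R\in\operatorname{im}\eta_0$ after undoing the composition.

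Finally, the first isomorphism theorem yields $\SO(4)\cong\bigl(\sptc(1)\times\sptc(1)\bigr)/\{\pm(1,1)\}$. Since $\sptc(1)\times\sptc(1)\cong S^3\times S^3$ is simply connected, the $2{:}1$ surjective homomorphism $\eta_0$ is a universal covering of $\SO(4)$; comparing with the defining sequence $0\to\mathbb{Z}_2\to\spin(4)\to\SO(4)\to1$ and the fact that $\spin(4)$ is connected and simply connected (as $\dim\geq 3$), I conclude $\spin(4)\cong\sptc(1)\times\sptc(1)$. Alternatively, staying inside the Clifford algebra, one writes the element of $\spin(4)\subset\Cl^0(\R^4)\cong\mathbb{H}\oplus\mathbb{H}$ whose image under $\tad$ is $L_{x,y}$ and checks that these exhaust $\spin(4)$, recovering the same identification.
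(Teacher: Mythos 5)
Your proof is correct and complete: the verification that $L_{x,y}$ is an isometry, the homomorphism property via $\widetilde{y_2}\,\widetilde{y_1}=\widetilde{y_1y_2}$, the kernel computation using centrality of $x$ in $\mathbb{H}$, the surjectivity argument (either by the dimension count or, more in the spirit of the surrounding text, by reducing to the $\SO(3)$ lemma after normalising $R(1)=1$), and the identification of $\spin(4)$ via uniqueness of the universal cover are all sound. The paper itself states this lemma as a citation to Salamon's notes and offers no proof, so there is nothing to compare against; your argument is the standard one and fills that gap correctly.
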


The last lemma provides two natural surjective homomorphisms $\rho^\pm: \SO(4)\rightarrow \SO(3)$ and, therefore,  two exact sequences
$$
1\rightarrow \sptc(1) \xrightarrow{\iota^\pm}\SO(4)\xrightarrow{\rho^\pm} \SO(3)\rightarrow 1
$$
where $\iota^+(v)=\eta_0([v,1])$ and $\iota^-(v)=\eta_0([1,v])$, interpreting $\eta_0$ as the induced homomorphism on the quotient $\sptc(1)\times_{\mathbb{Z}_2}\sptc(1)$. Those sequences are related to the $\SO(4)$-action on the spaces of self-dual and anti-self-dual $2$-forms of a $4$-dimensional inner-product space.

An element $q\in \mathbb{H}$ in the canonical basis $q= t+xi+yj+zk=(t+xi)+(y+zi)j$ can be identified with the $2\times 2$ complex matrix 
$$
A=
\begin{pmatrix}
t+xi & -y+zi \\ y+zi & t-xi
\end{pmatrix},
$$
%In particular, the standard basis for $\mathbb{H}$ has the form 
%$$
%1=\begin{pmatrix}
%1 & 0 \\ 0 & 1
%\end{pmatrix}, 
%\quad
%e_1=\begin{pmatrix}
%i & 0 \\ 0 & -i 
%\end{pmatrix}, 
%\quad 
%e_2=\begin{pmatrix}
%0 & -1 \\ 1 & 0
%\end{pmatrix} 
%\qandq 
%e_3=\begin{pmatrix}
%0 & i \\ i & 0
%\end{pmatrix}
%$$
with 
$$
\det A=t^2+x^2+y^2+z^2=|q|^2.
$$
Since $A^\ast A=(\det A)I_2$, every $q\in \sptc(1)\cong S^3$ is identified with a unitary matrix with determinant $1$, that is, $\SU(2)\cong \sptc(1)$.

\begin{definition}
\label{def: spin structure}
Let $V$ be a real inner product space of dimension $2n\equiv 2,4 \mod 8$ or $2n+1\equiv 3 \mod 8$. A \emph{Spin structure} on $V$ is a quadruple $(S,I,J,\Gamma)$, where $S$ is a $2^{n+1}$-dimensional real inner product space, $I$ and $J$ are two anti-commuting orthogonal complex structure 
$$
I^{-1}=I^\ast=-I, 
\quad J^{-1}=J^\ast=-J, 
\quad IJ=-JI,
$$ 
and $\Gamma: V \rightarrow \End(S)$ is a real linear map with the following properties: 
$$
\Gamma(v)^\ast +\Gamma(v)=0, 
\quad \Gamma(v)^\ast\Gamma(v)=|v|^2\mathbb{1}, 
\quad \Gamma(v)I=I\Gamma(v), 
\quad \Gamma(v)J=J\Gamma(v),
\quad\forall v\in V.
$$
\end{definition}

\begin{example}
For a vector space $V$ of real dimension $4$, using the identification $V\cong \mathbb{H}$ and defining $S=\mathbb{H}\oplus \mathbb{H}$, we have the maps $\Gamma: \mathbb{H}\rightarrow \End(\mathbb{H}\oplus \mathbb{H})$, $I,J:\mathbb{H}\oplus \mathbb{H}\rightarrow \mathbb{H}\oplus \mathbb{H}$ defined for  $v,x,y \in \mathbb{H}$ by
$$
\Gamma(v)(x,y)=(vy,-\bar{v}x), 
\quad I(x,y)=(xi,yi), 
\quad J(x,y)=(xj,yj).
$$
It is interesting to note that
$$
 \Gamma(v)=\begin{pmatrix}
            0 & \gamma(v) \\ -\gamma(v)^\ast & 0
           \end{pmatrix},
$$
where $\gamma: \mathbb{H} \rightarrow \End(\mathbb{H})$ also satisfies
$$
\gamma(v)^\ast +\gamma(v)=0, 
\quad \gamma(v)^\ast\gamma(v)=|v|^2\mathbb{1}, 
\quad\forall v\in \mathbb{H}.
$$ 
\end{example}

Given a Spin structure on a $4$-dimensional space $V$, consider $S=S^+\oplus S^-$, where $S^+$ and $S^-$ are copies of $\mathbb{C}^2$ with standard Hermitian metric $\langle \cdot , \cdot \rangle$. The associated symplectic form compatible with the almost complex structure $I: S^\pm \rightarrow S^\pm$ is defined by $\omega(x,y):=\langle x, Iy\rangle.$
Now, consider the  (real) $4$-dimensional space $\Hom_I(S^+,S^-)=\real (\Hom(S^+,S^-))$ of linear maps over the quaternions, where $\Hom(S^+,S^-)$ are complex linear maps. Unitary elements of $\Hom_I(S^+,S^-)$ preserve the Hermitian and symplectic structures,
and  $\gamma: V\rightarrow \Hom_I(S^+,S^-)$ defined above acts on
the standard basis by $$\gamma(e_1)=\begin{pmatrix}
1 & 0 \\ 0 & 1
\end{pmatrix}, \ \ \gamma(e_2)=\begin{pmatrix}
i & 0 \\ 0 & -i 
\end{pmatrix}, \ \ \gamma(e_3)=\begin{pmatrix}
0 & -1 \\ 1 & 0
\end{pmatrix} \ \ \gamma(e_4)=\begin{pmatrix}
0 & i \\ i & 0
\end{pmatrix}.$$
Up to isomorphism, the above generate $\SU(2)\cong \spin(3)$, since the symmetry group  $\SU(2)^+\times\SU(2)^-$
of  $(S^+,S^-)$ is connected. Thus $\gamma$ fixes the orientation of $V$ and, using the sympletic form to identify $S^+$ with its dual, we have 
\begin{equation}\label{iso4}
V\otimes_{\mathbb{R}} \mathbb{C}\cong S^+\otimes_{\mathbb{C}} S^-.
\end{equation}
Moreover, given $v\in V$, consider the Hermitian adjoint $\gamma(v)^\ast : S^-\rightarrow S^+$  of the map $\gamma(v): S^+\rightarrow S^-$. Then, for orthonormal vectors $v,v'\in V$, the map $\gamma(v)^\ast\gamma(v')$ defines an endomorphism of $S^+$ which satisfies
\begin{equation*}
\gamma(v)^\ast \gamma(v)=1
\qandq 
\gamma(v)^\ast\gamma(v')+\gamma^\ast(v')\gamma(v)=0.
\end{equation*}
In particular, we have a natural action $\rho$ of $\Lambda^2(V)$ on $S^+$ defined by 
$$
\rho(v\wedge v')s:=-\gamma(v)^\ast\gamma(v')s 
\qforq 
s\in S^+.
$$ 

Now, with respect to the Euclidean metric, the $2$--forms split as $\Lambda^2(V)=\Lambda^2_+(V)\oplus \Lambda^2_-(V)$, where  $\Lambda^2_+(V)$ and $\Lambda^2_-(V)$ denote the self-dual and anti-self-dual forms, respectively: $$\Lambda^2_\pm(V):=\{\beta \in \Lambda^2(V) \ | \ \ast\beta=\pm\beta\}.$$ We observe that $\Lambda^2_-(V)$ acts trivially on $S^+$, by direct inspection on basis elements: 
$$
\Lambda^2_-(V)= \Span \{e_1\wedge e_2 - e_3\wedge e_4 , e_1\wedge e_4-e_2\wedge e_3, e_1\wedge e_3-e_4\wedge e_2\}
$$ 
\begin{align*}
\rho(e_1\wedge e_2-e_3\wedge e_4) &=-\gamma(e_1)^\ast\gamma(e_2)+\gamma(e_3)^\ast\gamma(e_4)\\
&=\begin{pmatrix}
-1 & 0 \\ 0 & -1 
\end{pmatrix} \begin{pmatrix}
i & 0 \\ 0 & -i
\end{pmatrix}+ \begin{pmatrix}
0 & 1 \\ -1 & 0
\end{pmatrix}
\begin{pmatrix}
0 & i \\ i & 0
\end{pmatrix}= 0,
\end{align*}

\begin{align*}
\rho(e_1\wedge e_4-e_2\wedge e_3) &=-\gamma(e_1)^\ast\gamma(e_4)+\gamma(e_2)^\ast\gamma(e_3)\\
&=\begin{pmatrix}
-1 & 0 \\ 0 & -1 
\end{pmatrix} \begin{pmatrix}
0 & i \\ i & 0
\end{pmatrix}+ \begin{pmatrix}
-i & 0 \\ 0 & i
\end{pmatrix}
\begin{pmatrix}
0 & -1 \\ 1 & 0
\end{pmatrix}= 0,
\end{align*}

\begin{align*}
\rho(e_1\wedge e_3-e_4\wedge e_2) &=-\gamma(e_1)^\ast\gamma(e_3)+\gamma(e_4)^\ast\gamma(e_2)\\
&=\begin{pmatrix}
-1 & 0 \\ 0 & -1 
\end{pmatrix} \begin{pmatrix}
0 & -1 \\ 1 & 0
\end{pmatrix}+ \begin{pmatrix}
0 & -i \\ -i & 0
\end{pmatrix}
\begin{pmatrix}
i & 0 \\ 0 & -i
\end{pmatrix}= 0.
\end{align*}
Thus we get the isomorphisms $\Lambda^2_+(V)\rightarrow \mathfrak{su}(S^+)$ and $\Lambda^2_-(V)\rightarrow \mathfrak{su}(S^-)$.

\subsection{$\gt$--manifolds and associative submanifolds}
We first present some algebraic and geometric proprieties of manifolds with $\gt$--structures which can be found e.g. in \cite{harvey1982calibrated,corti1207g2}. 

The octonions $\mathbb{O}=\mathbb{H}\oplus \mathbb{H}\cong \mathbb{R}^8$ are an $8$-dimensional, non-associative division algebra. On the imaginary part $\imag(\mathbb{O})=\mathbb{R}^7$, the cross product 
$$
\begin{array}{rcccl}
        \times &:& \mathbb{R}^7\times \mathbb{R}^7 &\rightarrow& \mathbb{R}^7\\
        &&(u,v)&\mapsto&\imag(uv)
\end{array}
$$         
corresponds to a $3$-form $\varphi_0\in \Omega^3(\mathbb{R}^7)$, defined by $\varphi_0(u,v,w)=\langle u\times v, w\rangle$ with the Euclidean inner
product. In coordinates $(x_1,...,x_7)\in \mathbb{R}^7$, we fix the convention
\begin{equation}\label{usual_G2-structure}
\varphi_0=e^{123}+e^{145}+e^{167}+e^{246}-e^{257}-e^{347}-e^{356}
\end{equation}
and accordingly its dual $4$--form
\begin{equation*}
\psi_0:=\ast\varphi_0 =e^{4567}+e^{2367}+e^{2345}+e^{1357}-e^{1346}-e^{1256}-e^{1247}.
\end{equation*}
The Lie group $\gt$ can be defined as the stabiliser of  $\varphi_0$ in $\Gl(7,\mathbb{R})$.

\begin{definition}
Let $M$ be a smooth oriented $7$--manifold. A $\gt$--structure is a 3--form $\varphi\in \Omega^3(M)$ such that, around every  $p\in M$, there exists a local section $f$ of the oriented frame bundle P$_{\SO}(M)$ such that $$\varphi_p =(f_p)^\ast\varphi_0.$$
\end{definition}

The $\gt$--structure $\varphi$ determines a Riemannian metric and a volume form by the relation (cf. \cite{karigiannis2009flows})
\begin{equation}\label{G2-metricrelation}
        (u\lrcorner\varphi)\wedge (v\lrcorner\varphi)\wedge\varphi=6g_{\varphi}(u,v)\vol_{\varphi}.
\end{equation}
Consequently, $\varphi$ induces a Hodge star operator $\ast_\varphi$ and the Levi-Civita connection $\nabla^\varphi$, though for simplicity  we omit henceforth the subscripts in $g:=g_\varphi$, $\ast:=\ast_\varphi$ and $\nabla:=\nabla^\varphi$.  Moreover, the model cross-product on $\R^7$ induces the bilinear map on vector fields
\begin{equation} \label{eq: cross-product}
\begin{array}{rcl}
        P:\Omega^0(TM)\times \Omega^0(TM) &\rightarrow& \Omega^0(TM)\\
        (u,v)&\mapsto& P(u,v)=u\times v.
\end{array} 
\end{equation}
The $\gt$--structure $\varphi$ is called \emph{torsion-free} if $\nabla \varphi=0$, in
which case we say that $(M,\varphi)$ is a\emph{ $\gt$--manifold}. This condition is equivalent to $\nabla P=0$.

\begin{remark}\label{sign-convention2}
Regarding orientation conventions, some authors adopt the model $3$-form to be 
$$
  \phi_0=e^{567}+e^{125}+e^{136}+e^{246}+e^{147}-e^{345}-e^{237},
$$
(cf. \cite[Chapters 4 and 5]{mclean1996deformations}), which relates to \eqref{usual_G2-structure} by the orientation-reversing automorphism of $\mathbb{R}^7$
$$\begin{pmatrix}
 & & & & I_3\\
1&0&0&0&    \\
0&1&0&0&    \\
0&0&1&0&    \\
0&0&0&-1&   
\end{pmatrix}.
$$
In this case, relation \eqref{G2-metricrelation} becomes
$$
(u\lrcorner\phi_0)\wedge (v\lrcorner\phi_0)\wedge\phi_0=-6g_0(u,v)\vol_{g_0}.
$$
Unless otherwise stated, we adopt throughout the convention \eqref{usual_G2-structure}.
\end{remark}

\begin{definition}
\label{def: associative submanifold}
Let $(M,\varphi)$ be a $7$--manifold with  $\gt$--structure. A $3$--dimensional submanifold $Y\subset M$ is called \emph{associative} if $\varphi|_Y\equiv \vol(Y)$.
\end{definition}

The etymology of Definition \ref{def: associative submanifold} stems from
the  \emph{associator} $\chi\in \Omega^3(M,TM)$, defined by 
\begin{equation}\label{psi_associator}
\psi(u,v,w,z)=\ast\varphi(u,v,w,z)=\langle \chi(u,v,w),z\rangle.
\end{equation}
In a local orthonormal frame   $\{e_i\}_{i=1\dots 7}$ of $TM$, one has $\chi=-\sum_{i=1}^7(e_i\lrcorner \psi)\otimes e_i$. Expressing $\chi$ in terms of the cross product (c.f. \cite{harvey1982calibrated}),
\begin{equation}\label{def_associator}
\chi(u,v,w)=-u\times (v\times w)-\langle u,v\rangle w+\langle u,w\rangle v,
\end{equation}
and using the relation (c.f.\cite{harvey1982calibrated})
\begin{equation*}
\varphi(u,v,w)+\frac{1}{4}|\chi(u,v,w)|^2=|u\wedge v\wedge w|^2,
\end{equation*}we see that the associative condition is equivalent to $\chi|_Y \equiv 0$.

\begin{remark}
In  the sign convention of Remark \ref{sign-convention2}, the associator is written as
\begin{equation*}
\chi(u,v,w)=u\times (v\times w)+\langle u,v\rangle w-\langle u,w\rangle v
\end{equation*}
\end{remark}

\begin{lemma}   \label{lemma: TY=Lambda+}
If $Y$ is an associative submanifold, then there is a natural identification $TY\cong \Lambda^2_+(NY)$.
\end{lemma}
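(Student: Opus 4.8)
The plan is to produce the isomorphism $TY \cong \Lambda^2_+(NY)$ from purely pointwise algebra, exploiting the rigidity of the $G_2$ cross product on an associative $3$-plane. First I would fix a point $p \in Y$ and an oriented orthonormal frame adapted to the associative splitting, i.e. $e_1, e_2, e_3$ spanning $T_pY$ and $e_4, \dots, e_7$ spanning $N_pY$, chosen so that $\varphi_p$ agrees with the model form \eqref{usual_G2-structure} under this identification; this is possible exactly because $Y$ is associative and $G_2$ acts transitively on associative $3$-planes with their induced orientation. The associative condition $\chi|_Y \equiv 0$, together with the explicit expression $\chi(u,v,w) = -u\times(v\times w) - \langle u,v\rangle w + \langle u,w\rangle v$, forces $e_i \times e_j \in T_pY$ for $i,j \in \{1,2,3\}$ and, crucially, shows that the cross product maps $T_pY \times N_pY \to N_pY$. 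Thus for each $u \in T_pY$ the operator $v \mapsto u \times v$ restricts to a skew-symmetric endomorphism $\mathcal{C}_u$ of $N_pY$, which I interpret as an element of $\Lambda^2(N_pY)$ via the metric.

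Next I would identify the image of the linear map $u \mapsto \mathcal{C}_u$ with $\Lambda^2_+(N_pY)$. The cleanest route is to use the spin-geometric picture already set up in the excerpt: under the isomorphism $NY \cong S^+ \otimes_{\mathbb H} S^-$ and \eqref{iso4}, the self-dual forms $\Lambda^2_+(N_pY)$ correspond to $\mathfrak{su}(S^+)$, a $3$-dimensional space, matching $\dim T_pY = 3$. So it suffices to check (i) $\mathcal{C}_u$ is self-dual on $N_pY$ for every $u \in T_pY$, and (ii) $u \mapsto \mathcal{C}_u$ is injective. For (i), self-duality of $e_i \wedge e_j + e_k \wedge e_l$ type combinations can be read directly off the model $\varphi_0$ and $\psi_0$: the components of $e_1\times (\cdot)$ on $N_p Y$ are precisely $e_4\wedge e_5 + e_6 \wedge e_7$ up to the conventions in \eqref{usual_G2-structure}, and similarly for $e_2, e_3$; all of these are self-dual with respect to the induced orientation and metric on $N_pY$. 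For (ii), injectivity is immediate since $\mathcal{C}_u(e_4) $ recovers the $N_pY$-components determined by $u$ through $\varphi$, and $\varphi|_Y$ is the volume form, so no nonzero $u$ can act trivially. A dimension count then upgrades the injection to an isomorphism $T_pY \xrightarrow{\sim} \Lambda^2_+(N_pY)$.

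Finally I would argue that this pointwise isomorphism is natural, hence globalizes to a bundle isomorphism. Naturality follows because the construction used only the cross product $P$ of \eqref{eq: cross-product}, which is canonically determined by $\varphi$, and the orthogonal splitting $TM|_Y = TY \oplus NY$, which is canonical once $Y$ is fixed; the map $u \mapsto (v \mapsto P(u,v)|_{NY})$ is evidently smooth in $u$ and $C^\infty(Y)$-linear, so it defines a smooth bundle map $TY \to \Lambda^2_+(NY)$ which is a fibrewise isomorphism by the above, and therefore a bundle isomorphism.

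The main obstacle I expect is purely bookkeeping: verifying from the chosen sign convention \eqref{usual_G2-structure} that the relevant $2$-forms $e_i \times (\cdot)|_{NY}$ land in the $+1$ eigenspace of $\ast$ on $N_pY$ rather than the $-1$ eigenspace — this is the content of Remark \ref{sign-convention2}, and getting the orientation of $NY$ compatible with that of $TY$ and $TM$ is the one place an error could slip in. Everything else is transitivity of $G_2$ on associative planes plus a $3 = 3$ dimension count.
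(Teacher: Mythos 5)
Your proposal is correct and follows essentially the same route as the paper: since $\langle u\times v,w\rangle=\varphi(u,v,w)$, your map $u\mapsto\mathcal{C}_u$ is exactly the paper's $e_j\mapsto(e_j\lrcorner\varphi)|_{N_pY}$ under the metric identification of skew endomorphisms with $2$-forms, and both arguments verify self-duality by writing $\varphi$ in a frame adapted to the splitting $T_pM=T_pY\oplus N_pY$. The extra checks you supply (that $\times$ maps $T_pY\times N_pY$ to $N_pY$, injectivity, and smoothness in $p$) are correct and merely make explicit what the paper leaves implicit.
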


\begin{proof}
Fix local orthonormal frames $e_1,e_2,e_3$ and $\eta_4,\eta_5,\eta_6,\eta_7$ of  $TY$ and $NY$, respectively, about a point $p\in Y$: 
\begin{equation}\label{varphi_2}
\varphi_p=e^{123}+e^1(\eta^{45}+\eta^{67})+e^2(\eta^{46}+\eta^{75})-e^3(\eta^{47}+\eta^{56})
\end{equation}
and 
\begin{align*}
e_1\lrcorner \varphi &= e^{23}+\eta^{45}+\eta^{67},\\
e_2\lrcorner \varphi &= e^{31}+\eta^{46}+\eta^{75},\\
e_3\lrcorner \varphi &= e^{12}-\eta^{47}-\eta^{56}.
\end{align*}
Denote $\omega_1=(e_1\lrcorner \varphi)|_{N_pY}$, $\omega_2=(e_2\lrcorner \varphi)|_{N_pY}$, $\omega_3=-(e_3\lrcorner \varphi)|_{N_pY}$ and define on each fibre the isomorphism  $e_j\in T_pY\mapsto \omega_j\in \Lambda_+^2(N_pY)$,
which obviously varies smoothly with $p$. 
\end{proof}

\subsection{The twisted Dirac operator}
The oriented orthonormal frame of $TY$ has the form $\{e_1,e_2,e_3=e_1\times e_2\}$. So, with respect to the splitting $TM|_Y=TY\oplus NY$, the cross product induces maps
$$
\Omega^0(TY)\times \Omega^0(TY)\rightarrow \Omega^0(TY), \quad \Omega^0(TY)\times \Omega^0(NY)\rightarrow \Omega^0(NY)$$
$$\Omega^0(NY)\times \Omega^0(NY)\rightarrow \Omega^0(TY).$$
In particular, the map $\gamma:  \Omega^0(TY)\times \Omega^0(NY)\rightarrow \Omega^0(NY)$ endows $NY$ with a Clifford bundle structure.\\
Since the Levi-Civita connection of $(M,\varphi)$ induces metric connections on the bundles $TY$ and $NY$,  the composition 
\begin{equation}        \label{diraccomposition}
        \Omega^0(NY)\xrightarrow{\nabla_{A_0}} \Omega^0(TY)\otimes\Omega^0(NY)\xrightarrow{\gamma}\Omega^0(NY)
\end{equation}
defines a natural \emph{Fueter-Dirac operator} $\dirac_{A_0}(\sigma):=\gamma(\nabla_{A_0}(\sigma))$, where $A_0\in\Omega^1(Y,\mathfrak{so}(4))$ denotes the connection induced on $NY$ by the Levi-Civita connection $\nabla^\varphi$ of the $\gt$-metric of $(M,\varphi)$. To simplify the notation, the twisted Dirac operator induced by the normal connection $A_0$ will be denoted just by $\dirac$.\\ 
        
The normal bundle $NY$ of an associative submanifold is trivial  \cite[Lemma
5.1, arXiv version: 1207.4470v3]{corti1207g2}. In particular, the second Stiefel-Whitney class $w_2(NY)$ vanishes, so there exists a spin structure on $NY$ \cite[Theorem 1.7]{lawson1989spin}. This is equivalent to the existence of a map $\Gamma: NY\rightarrow \End(S)$ such that 
$$\Gamma(\sigma)+ \Gamma(\sigma)^\ast=0 \ \ \ \Gamma(\sigma)^\ast\Gamma(\sigma)=\langle\sigma,\sigma\rangle\mathbb{1} \ \ \ \sigma\in  \Omega^0(Y,NY),$$
where $S$ is a vector bundle of (real) rank $8$ and it splits into $\Gamma$--eigenbundles $S^+$ and $S^-$ of rank $4$. We saw in the last Section that the Spin structure induces an isomorphism $$\rho_\pm: \Lambda^2_\pm (NY)\rightarrow \mathfrak{su}(S^\pm),$$so, by Lemma \ref{lemma: TY=Lambda+}, the Spin structure $\Gamma_0: TY\rightarrow \End(S^+)$ on $TY$ coincides with the Spin structure on $NY$ via the projection $\spin(4)=\spin(3)\times\spin(3)$. Defining the Clifford multiplication 
$$
\tau :=\Gamma_0\otimes \mathbb{1}_{S^-} 
        \colon 
        TY  \rightarrow  \End(S^+\otimes S^-)
$$
and using the Spin connection $\nabla$ on $S^+\otimes S^-$, 
$$
\nabla(\sigma\otimes \varepsilon)=\nabla^+\sigma \otimes \varepsilon +\sigma\otimes \nabla^-\varepsilon,
$$
we form the Dirac operator $D: \Omega^0(Y,S^+ \otimes S^-)\rightarrow \Omega^0(Y,S^+\otimes S^-)$ by
$$
D(\sigma\otimes\varepsilon):=\sum_{i=1}^3 \tau(e_i)\nabla_i(\sigma\otimes \varepsilon).
$$

\begin{prop} \label{prop: NY=S+ x S-}
Under the isomorphism \eqref{iso4}, we have $NY\otimes_{\mathbb{R}} \mathbb{C}\cong S^+\otimes_{\mathbb{C}} S^-$, the Spin connection $\nabla$ and the Clifford
multiplication $\tau$ agree with the induced connection $\nperp$  on  $NY$ and  $\gamma$, respectively. 
\begin{proof}
In fact, each section $\sigma\otimes \varepsilon$ of $S^+\otimes_{\mathbb{C}}S^-$ induces a section $\nu=\sigma^\ast\otimes \varepsilon$ on $\Hom(S^+,S^-)\cong (S^+)^\ast \otimes S^-$ such that $\nu(\sigma)=\sigma^\ast(\sigma)\otimes\varepsilon=\varepsilon$, then
\begin{align*}
        \nabla \nu 
        &= \nabla(\sigma^\ast \otimes \varepsilon)\\
        &= (\nabla^+)^\ast \sigma^\ast \otimes \varepsilon+\sigma^\ast \otimes \nabla^-  \varepsilon,
\end{align*}
where $\nabla \nu$ is a section on $T^\ast Y\otimes \Hom(S^+,S^-)$, so, for each $\sigma$ section on $S^+$
\begin{align*}
        (\nabla \nu)(\sigma) 
        &=(\nabla^+)^\ast \sigma^\ast (\sigma) \otimes \varepsilon+\sigma^\ast (\sigma) \otimes \nabla^-  \varepsilon\\
        &=[d\sigma^\ast (\sigma)-\sigma^\ast(\nabla^+ \sigma)]\otimes \varepsilon+\sigma^\ast(\sigma)\otimes \nabla^- \varepsilon\\
        &= -\nu(\nabla^+\sigma)+\nabla^-(\nu(\sigma)).
\end{align*}
On the other hand, the Spin connection $\nabla$ is compatible with the induced connection $\nperp$, that is,
$$
\nabla^-(\Gamma(n)\sigma)=\Gamma(\nperp n)\sigma+\Gamma(n)\nabla^+\sigma,
$$
where $\Gamma: NY\rightarrow \Hom_J(S^+,S^-)$ is the isomorphism induced by \eqref{iso4}, then for each section $n$ of $NY$ and $\sigma$ of $S^+$,
$$
\Gamma(\nperp n)=-\Gamma(n)\nabla^+ \sigma+\nabla^-(\Gamma(n)\sigma).
$$
Therefore, $\nperp$ agrees with the Spin connection $\nabla$ via the isomorphism $\Gamma$.
Finally, with respect to the Clifford multiplications we have 

\begin{center}
\begin{tikzcd}
TY \arrow{r}{\Gamma_0} \arrow{d}{\gamma} & \End(S^+)\arrow[hook]{rr}{\otimes \mathbb{1}_{\End(S^-)}} & &\End(S^+\otimes_{\mathbb{C}} S^-) \\
\End(NY\otimes_{\mathbb{R}}\mathbb{C})\arrow[swap]{urrr}{\cong}&&&
\end{tikzcd}
\end{center}
and by Schur's lemma $\gamma$ and $\tau$ are the same.
\end{proof}

\end{prop}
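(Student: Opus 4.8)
The plan is to reduce the three assertions to the fibrewise linear algebra of Section~\ref{G2section} together with the defining compatibility of the induced connections with Clifford multiplication; because the normal bundle is trivial, hence carries a \emph{global} spin structure $\Gamma\colon NY\to\End(S)$ with $S=S^+\oplus S^-$, every pointwise identity produced below globalises verbatim. For the first claim I would apply the isomorphism \eqref{iso4} fibrewise to $V=N_pY$, reformulated through $\Hom$-bundles: a section $\sigma\otimes\varepsilon$ of $S^+\otimes_{\mathbb{C}}S^-$ determines a section $\nu=\sigma^\ast\otimes\varepsilon$ of $\Hom(S^+,S^-)\cong(S^+)^\ast\otimes S^-$, using the symplectic form $\omega=\langle\cdot,I\cdot\rangle$ to identify $S^+$ with $(S^+)^\ast$, and the off-diagonal part $\Gamma\colon NY\to\Hom_I(S^+,S^-)$ of the spin module is a fibrewise real-linear isomorphism onto the four-dimensional space of $I$-linear homomorphisms; complexifying yields $NY\otimes_{\mathbb{R}}\mathbb{C}\xrightarrow{\ \cong\ }\Hom(S^+,S^-)\cong S^+\otimes_{\mathbb{C}}S^-$.

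For the compatibility of connections I would evaluate both descriptions on a split section. The tensor-product spin connection on $S^+\otimes S^-$, transported to $\Hom(S^+,S^-)$ and using that the dual connection on $(S^+)^\ast$ is $(\nabla^+)^\ast$, acts for $\nu=\sigma^\ast\otimes\varepsilon$ and an arbitrary section $\sigma'$ of $S^+$ by
\[
(\nabla\nu)(\sigma')=\bigl[d\,\sigma^\ast(\sigma')-\sigma^\ast(\nabla^+\sigma')\bigr]\otimes\varepsilon+\sigma^\ast(\sigma')\otimes\nabla^-\varepsilon=-\nu(\nabla^+\sigma')+\nabla^-\!\bigl(\nu(\sigma')\bigr).
\]
On the other hand, the spin connection on $S$ is, by construction, the metric connection satisfying the Clifford--Leibniz rule $\nabla^-(\Gamma(n)\sigma')=\Gamma(\nperp n)\sigma'+\Gamma(n)\nabla^+\sigma'$, i.e. $\Gamma(\nperp n)(\sigma')=-\Gamma(n)(\nabla^+\sigma')+\nabla^-(\Gamma(n)\sigma')$. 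These two identities have the same shape, so $\Gamma$ is a parallel bundle isomorphism and therefore intertwines $\nperp$ on $NY$ with the spin connection $\nabla$ on $S^+\otimes S^-$. Along the way I would record the auxiliary facts that make this legitimate: $\omega$ is $\nabla^+$-parallel (since $\langle\cdot,\cdot\rangle$ and $I$ are), so the $\Hom$-picture genuinely carries the tensor connection; and $\nabla^\pm$ commute with $I$, so the tensor connection preserves the subbundle $\Hom_I(S^+,S^-)$.

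For the Clifford multiplication, Lemma~\ref{lemma: TY=Lambda+} together with the isomorphism $\rho_+$ gives $\Gamma_0\colon TY\to\mathfrak{su}(S^+)\subset\End(S^+)$, so $\tau=\Gamma_0\otimes\mathbb{1}_{S^-}$ acts only on the first tensor factor of $S^+\otimes_{\mathbb{C}}S^-\cong NY\otimes_{\mathbb{R}}\mathbb{C}$, i.e. it corresponds to a Clifford action on $NY$. Both $\gamma$ (the cross-product action of $TY$ on $NY$) and $\Gamma_0$ exhibit the relevant rank-four bundle as an irreducible module over $\Cl(TY)\cong\Cl_3$ compatible with the orientation fixed by $\varphi$; since such a module is unique up to isomorphism, Schur's lemma identifies the two actions, which is the last claim.

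The conceptual content is light; I expect the real work to be bookkeeping. One must keep the real and complexified pictures apart, track which $\spin(3)$ factor of $\spin(4)=\spin(3)\times\spin(3)$ acts on $S^+$ and which on $S^-$ (so that the cross-product action of $TY$ lands in $\mathfrak{su}(S^+)$ rather than $\mathfrak{su}(S^-)$ --- already foreshadowed by the computation that $\Lambda^2_-$ acts trivially on $S^+$), and verify each ``parallel'' claim ($\omega$, the chirality grading, $I$) so that no zeroth-order terms survive the comparison of connections. Any sign slip in these short verifications would break the intertwining, so this is where I would be most careful.
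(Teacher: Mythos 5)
Your proposal follows the paper's own argument essentially step for step: the same reformulation of $S^+\otimes_{\mathbb{C}}S^-$ through $\Hom(S^+,S^-)\cong(S^+)^\ast\otimes S^-$, the same evaluation of the tensor connection on a split section $\nu=\sigma^\ast\otimes\varepsilon$ yielding $(\nabla\nu)(\sigma')=-\nu(\nabla^+\sigma')+\nabla^-(\nu(\sigma'))$, the same comparison with the Clifford--Leibniz identity for $\Gamma$, and the same Schur's lemma identification of $\gamma$ with $\Gamma_0$. The only additions are your explicit verifications that $\omega$ and $I$ are parallel, which the paper leaves implicit; the argument is correct.
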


 In conclusion,  \eqref{diraccomposition} defines a twisted Dirac operator.

\subsection{Torsion tensor and local description of $\varphi$}
\label{sec: torsion tensor}

We will briefly review the intrinsic torsion forms of a $\gt$--structure and define the full torsion tensor $T_{ij}$, using local coordinates, following  \cite{karigiannis2009flows}.  Our goal is to derive Lemma \ref{Leibniz_rule}, a set of `Leibniz rules' for the covariant derivative and curvature operators with respect to the vector cross-product, which will be instrumental in Section \ref{sec: General Fueter-Dirac Weitzenbock}.

As before, let $(M,\varphi)$ be a smooth $7$--manifold with $\gt$--structure. In a local coordinate system $(x_1,...,x_7)$, a differential $k$--form $\alpha$ on $M$ will be written as 
$$
  \alpha=\frac{1}{k!}\alpha_{i_1\cdots i_k}dx^{i_1}\wedge\cdots\wedge dx^{i_k}
$$ 
where the sum is taken over all ordered subsets $\{i_1\cdots i_k\}\subset \{1,...,7\}$ and $\alpha_{i_1\cdots i_k}$ is skew-symmetric in all indices, i.e. $\alpha_{i_1\cdots i_k}=\alpha(e_{i_1},...,e_{i_k})$. A Riemannian metric $g$ on $M$ induces on $\Omega^k:=\Omega^k(M)$ the metric  $g(dx^i,dx^j):=g^{ij}$, where $(g^{ij})$ denotes the inverse of the matrix $(g_{ij})$.

A $\gt$--structure $\varphi$ splits $\Omega^\bullet$ into orthogonal irreducible $\gt$ representations, with respect to its $\gt$--metric $g$. In particular, 
$$
\Omega^2= \Omega^2_7\oplus \Omega^2_{14}
\qandq
\Omega^3 =\Omega^3_1\oplus\Omega^3_7\oplus\Omega^3_{27},
$$  
where $\Omega_l^k \subset\Omega^k$ denotes (fibrewise) an irreducible $\gt$--submodule of dimension $l$, with an explicit  description:
\begin{align}\label{decomposition_spaces}
\begin{split}
\Omega^2_7 &=\{X\lrcorner \varphi; X\in \Omega^0(TM)\}\\
\Omega^2_{14} &=\{\beta\in \Omega^2; \beta\wedge\psi=0\}\\
\Omega^3_1 &=\{f\varphi; f\in C^\infty(M)\}\\
\Omega^3_7 &=\{X\lrcorner \psi; X\in\Omega^0(TM)\}\\
\Omega^3_{27} &=\{h_{ij}g^{jl}dx^i\wedge\biggl(\frac{\partial}{\partial x_l}\biggl)\lrcorner \varphi; h_{ij}=h_{ji}, \tr_g(h_{ij})=g^{ij}h_{ij}=0\}
\end{split}
\end{align}

The analogous decompositions of $\Omega^4$ and $\Omega^5$ are obtained from
the above by the Hodge isomorphism $\ast_\varphi:\Omega^k\to\Omega^{7-k}$.
Decomposing $d\varphi\in \Omega^4$ and $d\psi\in \Omega^5$, we introduce
the four \emph{torsion forms}  (cf. \cite{bryant2003some})
\begin{equation*}
        \tau_0\in\Omega^0_1,
        \quad \tau_1\in \Omega^1_7,
        \quad \tau_2\in \Omega^2_{14},
        \quad\tau_3\in \Omega^3_{27},
\end{equation*} 
defined by
\begin{equation*}
        d\varphi=\tau_0\psi+3\tau_1\wedge\varphi+\ast\tau_3
        \qandq d\psi=4\tau_1\wedge\psi+\ast\tau_2.
\end{equation*}
Naturally, these forms arise from the theorem of Fernandez and Gray \cite{fernandez1982riemannian}, asserting that a $\gt$--structure is torsion-free ($\nabla\varphi=0$) if, and only if, $\varphi$ is closed  and co-closed. So, if either condition fails,  the torsions $\nabla\varphi\in \Omega^1\otimes\Omega^3$ and $\nabla\psi\in \Omega^1\otimes\Omega^4$ can
be expressed in terms of the four torsion forms.

\begin{lemma}[\cite{karigiannis2009flows}, Lemma 2.24]\label{Omega3}
For any vector field, the $3$-form $\nabla_X\varphi$ lies in the subspace $\Omega^3_7$ of $\Omega^3$.
\end{lemma}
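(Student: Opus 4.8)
The plan is to observe that $\nabla_X\varphi$ is, pointwise, an infinitesimal rotation of $\varphi$, so it lies in the image of the natural $\mathfrak{so}(7)$--action on $\varphi$; the subalgebra $\mathfrak{g}_2$ annihilates $\varphi$, while its orthogonal complement in $\mathfrak{so}(7)$ --- which, as a $G_2$--module, is the irreducible standard representation $\R^7$ --- contributes precisely the $7$--dimensional summand $\Omega^3_7$ of \eqref{decomposition_spaces}.

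Concretely, I would fix $p\in M$ and choose a local section $f$ of the principal $G_2$--bundle $P_{G_2}(M)\subset P_{\SO}(M)$ over a contractible neighbourhood of $p$; such a section exists because a $G_2$--structure is a reduction of the frame bundle and a principal bundle over a contractible base is trivial. In the orthonormal coframe $\{f^1,\dots,f^7\}$ dual to $f$, the form $\varphi$ has the \emph{constant} coefficients of $\varphi_0$ in \eqref{usual_G2-structure}. Writing $\omega=(\omega^i{}_j)$ for the Levi-Civita connection $1$--form in this coframe, metric compatibility gives $\omega^i{}_j=-\omega^j{}_i$, so $\omega(X)\in\mathfrak{so}(7)\cong\Lambda^2\R^7$ for every vector field $X$. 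Expanding by the Leibniz rule and using that the coefficients of $\varphi$ are constant in the coframe,
\begin{equation*}
\nabla_X\varphi=\omega(X)\cdot\varphi,
\end{equation*}
where $\cdot$ is the derivation action of $\mathfrak{so}(7)$ on $\Lambda^3\R^7$, transported to $T^*_pM$ via $f$ (the sign convention being immaterial below).

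Next I would split $\omega(X)=A+B$ along $\mathfrak{so}(7)=\mathfrak{g}_2\oplus\mathfrak{g}_2^\perp$, where $\mathfrak{g}_2=\{A\in\mathfrak{so}(7):A\cdot\varphi_0=0\}$ by definition of $G_2$ and $\mathfrak{g}_2^\perp\cong\R^7$ as a $G_2$--module. Then $A\cdot\varphi=0$, hence $\nabla_X\varphi=B\cdot\varphi$. The linear map $\Phi\colon\mathfrak{g}_2^\perp\to\Lambda^3$, $B\mapsto B\cdot\varphi$, is $G_2$--equivariant; its kernel is a submodule of the irreducible $\mathfrak{g}_2^\perp\cong\R^7$, and it is proper since $\dim\mathfrak{g}_2=14<21=\dim\mathfrak{so}(7)$ shows $\mathfrak{so}(7)$ does not fix $\varphi_0$. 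Therefore $\Phi$ is injective, and its image is an irreducible $G_2$--submodule of $\Lambda^3$ isomorphic to $\R^7$. Since $\Omega^3=\Omega^3_1\oplus\Omega^3_7\oplus\Omega^3_{27}$ contains the summand $\Omega^3_7\cong\R^7$ with multiplicity one, this image must be $\Omega^3_7$, whence $\nabla_X\varphi=\Phi(B)\in\Omega^3_7$.

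I expect the only real subtlety to be this last step --- matching the copy of $\R^7$ produced by the $\mathfrak{so}(7)$--action with exactly the $\Omega^3_7$ of \eqref{decomposition_spaces}, rather than with some other realisation of $\R^7$ --- which multiplicity one settles. A coordinate alternative, closer to Karigiannis' formalism, avoids representation theory: the $\Omega^3_1$--component of $\nabla_X\varphi$ equals $\tfrac17\langle\nabla_X\varphi,\varphi\rangle\varphi=\tfrac1{14}(\nabla_X|\varphi|^2)\varphi=0$ since $|\varphi|^2\equiv7$; and differentiating the pointwise identity $\varphi_{ikl}\varphi_j{}^{kl}=6g_{ij}$ in the direction $X$ gives $(\nabla_X\varphi)_{ikl}\varphi_j{}^{kl}+(\nabla_X\varphi)_{jkl}\varphi_i{}^{kl}=0$, i.e. the $2$--tensor $h_{ij}:=(\nabla_X\varphi)_{ikl}\varphi_j{}^{kl}$ is skew-symmetric; since by \eqref{decomposition_spaces} the elements of $\Omega^3_{27}$ are parametrised by symmetric traceless $2$--tensors, and pair with $\nabla_X\varphi$ --- up to a nonzero constant --- through the contraction of such a tensor with $h$, the $\Omega^3_{27}$--component of $\nabla_X\varphi$ also vanishes. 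Either way, $\nabla_X\varphi\in\Omega^3_7$.
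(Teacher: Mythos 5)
Your proposal is correct, and your primary argument takes a genuinely different route from the paper's. The paper (following Karigiannis) simply checks orthogonality: it reduces to $X=e_l$ and verifies $g(\nabla_l\varphi,\eta)=0$ for all $\eta\in\Omega^3_1\oplus\Omega^3_{27}$ --- which is exactly your ``coordinate alternative'' at the end ($|\varphi|^2\equiv 7$ kills the $\Omega^3_1$ part, and skew-symmetry of $h_{ij}=(\nabla_X\varphi)_{ikl}\varphi_j{}^{kl}$, obtained by differentiating $\varphi_{ikl}\varphi_j{}^{kl}=6g_{ij}$, kills the pairing with the symmetric tensors parametrising $\Omega^3_{27}$). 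Your main argument instead trivialises $\varphi$ in a local $G_2$--frame, writes $\nabla_X\varphi$ as the infinitesimal $\mathfrak{so}(7)$--action $\omega(X)\cdot\varphi$, and uses $\mathfrak{so}(7)=\mathfrak{g}_2\oplus\mathfrak{g}_2^\perp$ with $\mathfrak{g}_2^\perp\cong\mathbb{R}^7$ irreducible, Schur's lemma, and the multiplicity--one occurrence of $\mathbb{R}^7$ in $\Lambda^3$. Each step checks out: the injectivity of $B\mapsto B\cdot\varphi$ on $\mathfrak{g}_2^\perp$ follows from irreducibility plus $\dim\mathfrak{g}_2<\dim\mathfrak{so}(7)$, and the multiplicity--one observation correctly rules out the image landing anywhere but $\Omega^3_7$. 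What the representation-theoretic route buys is conceptual clarity and generality (it shows at once that $\nabla\psi\in\Omega^1\otimes\Omega^4_7$, cf.\ Lemma \ref{lemma: covariant_psi2}, and explains \emph{why} the torsion is a $2$--tensor); what the paper's contraction argument buys is that it produces, with no extra work, the explicit identities \eqref{first_covariant_psi}--\eqref{second_covariant_psi} and the formula $\nabla_l\varphi_{abc}=T_{lm}g^{mn}\psi_{nabc}$ that the rest of Section \ref{sec: torsion tensor} actually uses.
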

\begin{proof}
It suffices to consider a coordinate vector $X=e_l$ and check that $g(\nabla_l\varphi,\eta)=0$ for an arbitrary $\eta\in \Omega^3_1\oplus\Omega^3_{27}$.
\end{proof}

In a local frame  $\{e_1,\dots,e_7\}$, denoting  $\nabla_l\varphi:=\frac{1}{6} \nabla_l\varphi_{abc}dx^a\wedge dx^b\wedge dx^c$ and identifying $\Omega^3_7\cong \Omega^1$, we see from \eqref{decomposition_spaces} that $\nabla_l\varphi$ is spanned by interior products $e_n\lrcorner\psi$, which defines a $2$--tensor $T_{lm}$ by 
$$
\nabla_l\varphi_{abc}=:T_{lm}g^{mn}\psi_{nabc}
$$ 
called the \emph{full torsion tensor}.

\begin{prop}[\cite{karigiannis2009flows}, Theorem 2.27]
\label{prop: full torsion tensor}
The full torsion tensor $T_{lm}$ is 
\begin{equation}
T_{lm}=\frac{\tau_0}{4}g_{lm}-(\tau_3)_{lm}+(\tau_1)_{lm}-\frac{1}{2}(\tau_2)_{lm},
\end{equation}
where $\tau_0$ is a function, $g_{lm}=g(e_l,e_m)$, $\tau_1=(\tau_1)_ldx^l$ is $\Omega^1_7$-form which can be written as a $\Omega^2_7$-form $\tau_1=\frac{1}{2}(\tau_1)_{ab}dx^a\wedge dx^b$ with $(\tau_1)_{ab}=(\tau_1)_lg^{lk}\varphi_{kab}$, $\tau_2=\frac{1}{2}(\tau_2)_{ab}dx^a\wedge dx^b$ and $\tau_3=\frac{1}{2}(\tau_3)_{im}g^{ml}\varphi_{ljk}dx^i\wedge dx^j\wedge dx^k$ is a $\Omega^3_{27}$-form. 
\end{prop}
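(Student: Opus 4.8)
The plan is to express both exterior derivatives $d\varphi$ and $d\psi$ in terms of the full torsion tensor $T$ using the identity $d=\sum_l e^l\wedge\nabla_l$ (valid because the Levi--Civita connection is torsion-free), and then to match $G_2$-irreducible components on the two sides. Concretely, I would first view $T_{lm}$ as a section of $T^\ast M\otimes T^\ast M$ and decompose it into $G_2$-irreducibles,
$$T=\lambda\,g+T^{(7)}+T^{(14)}+T^{(27)},$$
where $\lambda g$ is the trace part, $T^{(27)}\in\Sym^2_0(T^\ast M)\cong\Omega^3_{27}$ is the traceless symmetric part, and $T^{(7)}\in\Omega^2_7$, $T^{(14)}\in\Omega^2_{14}$ are the two pieces of the skew part (so $1+7+14+27=49=(\dim M)^2$). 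The content of the proposition is precisely that these four pieces equal $\tfrac{\tau_0}{4}g$, $\tau_1$, $-\tfrac12\tau_2$, $-\tau_3$ respectively, under the identifications used in the statement to regard each $\tau_i$ as a $2$-tensor (notably $(\tau_1)_{ab}=(\tau_1)_l g^{lk}\varphi_{kab}$).

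Next I would compute $d\psi$. Since $\ast$ is parallel, $\nabla_l\psi=\ast(\nabla_l\varphi)$, and $\nabla_l\varphi_{abc}=T_{lm}g^{mn}\psi_{nabc}$ together with the identity $\ast(X\lrcorner\psi)=-X^\flat\wedge\varphi$ (up to the ambient Hodge/orientation convention) gives $\nabla_l\psi=-T_{lm}g^{mn}\,e^n\wedge\varphi$. Hence $d\psi=-\big(\sum_l T_{lm}g^{mn}\,e^l\wedge e^n\big)\wedge\varphi$, so the symmetric part of $T$ drops out and only the skew part $2(T^{(7)}+T^{(14)})$ contributes. Using the standard facts $\beta\wedge\varphi=2\ast\beta$ for $\beta\in\Omega^2_7$ and $\beta\wedge\varphi=-\ast\beta$ for $\beta\in\Omega^2_{14}$, I would compare with the defining relation $d\psi=4\tau_1\wedge\psi+\ast\tau_2$: the $\Omega^5_7$-component identifies $T^{(7)}$ with a fixed multiple of $\tau_1$, and the $\Omega^5_{14}$-component yields $T^{(14)}=-\tfrac12\tau_2$.

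Then I would compute $d\varphi=\sum_l e^l\wedge\nabla_l\varphi=\sum_l T_{lm}g^{mn}\,e^l\wedge(e_n\lrcorner\psi)$. The trace part contributes $\lambda\sum_l e^l\wedge(e_l\lrcorner\psi)=4\lambda\,\psi\in\Omega^4_1$, which against $d\varphi=\tau_0\psi+3\tau_1\wedge\varphi+\ast\tau_3$ gives $\lambda=\tau_0/4$; the $\Omega^2_7$-part of $T$ produces the $\Omega^4_7$-component $3\tau_1\wedge\varphi$ (consistent with, and fixing the normalisation found in, the $d\psi$ computation, via Schur); the $\Omega^2_{14}$-part contributes nothing (the $G_2$-equivariant map $\Omega^2\to\Omega^4$ at hand must kill $\Omega^2_{14}$ since there is no $\Omega^4_{14}$); and the traceless symmetric part $T^{(27)}$ produces the $\Omega^4_{27}$-component $\ast\tau_3$, giving $T^{(27)}=-\tau_3$. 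Reassembling the four pieces yields the stated formula.

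The main obstacle is purely computational: all of the constants and signs ($\tfrac14$, $3$ versus $4$, $-\tfrac12$, $-1$) must be extracted from the $G_2$ contraction identities between $\varphi$ and $\psi$ (the ``$\varphi_{abc}\varphi_{dbc}$'', ``$\varphi_{abc}\psi_{dbce}$'', ``$\psi_{abcd}\psi_{ebcd}$'' formulas of \cite{karigiannis2009flows}), while also tracking the book-keeping between the $\Omega^1_7$- and $\Omega^2_7$-presentations of $\tau_1$ and the chosen Hodge/orientation conventions. A clean way to pin down each constant is to evaluate both sides of each component identity on the flat model $(\mathbb{R}^7,\varphi_0)$ against a single well-chosen test tensor in the relevant irreducible summand, reducing it to one scalar equation.
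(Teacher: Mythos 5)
Your outline is correct and is essentially the standard argument: the paper itself states this proposition without proof, citing Karigiannis (Theorem 2.27), and your strategy --- decompose $T$ into its $1+7+14+27$ irreducible pieces, write $d\varphi=\sum_l e^l\wedge\nabla_l\varphi$ and $d\psi=\sum_l e^l\wedge\nabla_l\psi$ using $\nabla_l\varphi_{abc}=T_{lm}g^{mn}\psi_{nabc}$ and $\ast(X\lrcorner\psi)=\varphi\wedge X^{\flat}$, and match $G_2$-components via Schur's lemma --- is precisely the proof given there. The only outstanding work is the constant/sign bookkeeping you already identify, which is fixed by the contraction identities exactly as you propose.
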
 

In \cite[Lemma A.14]{karigiannis2009flows}, Karigiannis   compiles several useful identities among the tensors $g$, $\varphi$ and $\psi$:
\begin{align}
        \psi_{rstu}\psi_{abcd}g^{ra}g^{sb}g^{tc}g^{ud}=&168
        \label{contraction_psi}\\
        \psi_{rstu}\psi_{abcd}g^{sb}g^{tc}g^{ud}=&24g_{ra}
        \label{contraction_psi2}
%         \psi_{rstu}\psi_{abcd}g^{tc}g^{ud}=&4g_{ra}g_{sb}-4g_{rb}g_{sa}-2\psi_{rsab}
%         \nonumber\\
% \begin{split}
%         \psi_{rstu}\psi_{abcd}g^{ud}=&-\varphi_{ast}\varphi_{rbc}-\varphi_{rat}\varphi_{sbc}-\varphi_{rsa}\varphi_{tbc}\\
%         &+g_{ra}g_{sb}g_{tc}+g_{rb}g_{sc}g_{ta}+g_{rc}g_{sa}g_{tb}\\
%         &-g_{ra}g_{sc}g_{tb}-g_{rb}g_{sa}g_{tc}-g_{rc}g_{sb}g_{ta}\\
%         &-g_{ra}\psi_{stbc}-g_{sa}\psi_{trbc}-g_{ta}\psi_{rsbc}\\
%         &+g_{ab}\psi_{rstc}-g_{ac}\psi_{rstb.}
%         \nonumber
% \end{split}
\end{align} 
Differentiating \eqref{contraction_psi} and \eqref{contraction_psi2}, one obtains 
\begin{equation}\label{first_covariant_psi}
        \nabla_l\psi_{rstu}\psi_{abcd}g^{ra}g^{sb}g^{tc}g^{ud}=0, 
\end{equation}
\begin{equation}\label{second_covariant_psi}
        \nabla_l\psi_{rstu}\psi_{abcd}g^{sb}g^{tc}g^{ud}=-\psi_{rstu}\nabla_l\psi_{abcd}g^{sb}g^{tc}g^{ud}.
\end{equation} 

\begin{lemma}\label{lemma: covariant_psi2}
For any vector field $X$, the $4$-form $\nabla_X\psi$ lies in the subspace $\Omega^4_7$ of $\Omega^4$.
\end{lemma}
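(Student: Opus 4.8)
The plan is to mimic exactly the argument just used for Lemma~\ref{Omega3}, now one degree up. We want to show that for any vector field $X$, the $4$-form $\nabla_X\psi$ lies in $\Omega^4_7$. Since $\Omega^4 = \Omega^4_1 \oplus \Omega^4_7 \oplus \Omega^4_{27}$ orthogonally, it suffices to fix a coordinate vector $X = e_l$ and verify that $g(\nabla_l\psi,\eta)=0$ for all $\eta \in \Omega^4_1 \oplus \Omega^4_{27}$. By the Hodge isomorphism $\ast_\varphi$, the space $\Omega^4_1$ is spanned by $\psi$ itself and $\Omega^4_{27} = \ast_\varphi(\Omega^3_{27})$ consists of $4$-forms whose contraction structure mirrors the symmetric traceless tensors in \eqref{decomposition_spaces}.

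First I would handle the $\Omega^4_1$ component. Since $\psi$ has pointwise constant norm $|\psi|^2 = 7$ (equivalently, using \eqref{contraction_psi}, $\psi_{rstu}\psi_{abcd}g^{ra}g^{sb}g^{tc}g^{ud}=168 = 7 \cdot 4!$), differentiating gives $g(\nabla_l\psi,\psi)=\tfrac12\nabla_l|\psi|^2=0$; this is precisely the content of \eqref{first_covariant_psi}. Hence $\nabla_l\psi$ has no $\Omega^4_1$ component. Next, for the $\Omega^4_{27}$ component, I would pair $\nabla_l\psi$ against an arbitrary element of $\Omega^4_{27}$. Elements of $\Omega^4_{27}$ can be written (via $\ast_\varphi$) in terms of a symmetric traceless tensor $h_{ij}$ contracted against $\psi$; testing $g(\nabla_l\psi,\eta)$ against such $\eta$ reduces, after expanding both sides in coordinates and contracting indices, to an expression of the form $h^{ij}(\text{something symmetric traceless in } i,j)$. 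The key point is that the ``something'' turns out to be proportional to $g_{ij}$, using the identity \eqref{second_covariant_psi} together with the fact that $\nabla_l\psi_{rstu}$ is itself governed by the full torsion tensor $T_{lm}$ (Proposition~\ref{prop: full torsion tensor}); since $h$ is traceless, the pairing vanishes. Alternatively, and perhaps more cleanly, one can exploit the metric compatibility $\ast_\varphi\nabla_l\varphi = \nabla_l(\ast_\varphi\varphi) = \nabla_l\psi$ (the Hodge star commutes with $\nabla$ because both the metric and orientation are parallel), so $\nabla_l\psi = \ast_\varphi(\nabla_l\varphi)$, and then Lemma~\ref{Omega3} gives $\nabla_l\varphi \in \Omega^3_7$, whence $\nabla_l\psi \in \ast_\varphi(\Omega^3_7) = \Omega^4_7$ immediately.

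I would actually present the short argument as the main proof and relegate the coordinate computation to a remark or omit it: since $\ast_\varphi$ is parallel with respect to $\nabla$ (as $\nabla g = 0$ and $\nabla \mathrm{vol}_\varphi = 0$), we have $\nabla_X\psi = \nabla_X(\ast_\varphi\varphi) = \ast_\varphi(\nabla_X\varphi)$, and Lemma~\ref{Omega3} tells us $\nabla_X\varphi \in \Omega^3_7$; since the Hodge star restricts to an isomorphism $\Omega^3_7 \xrightarrow{\sim} \Omega^4_7$ (the decomposition into $G_2$-irreducibles is preserved by $\ast_\varphi$), we conclude $\nabla_X\psi \in \Omega^4_7$.

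The only real subtlety — the ``hard part'' — is making sure that $\ast_\varphi$ genuinely commutes with $\nabla$ in this setting: the $G_2$-structure need not be torsion-free, but $\ast_\varphi$ is determined by the metric $g_\varphi$ and the orientation alone (cf. \eqref{G2-metricrelation}), both of which the Levi-Civita connection parallelises, so $\nabla(\ast_\varphi \beta) = \ast_\varphi(\nabla\beta)$ holds for every form $\beta$ regardless of torsion. Once this is granted, the statement is an immediate corollary of Lemma~\ref{Omega3}. For readers who prefer the explicit route, I would note that the coordinate proof proceeds exactly as in Lemma~\ref{Omega3}: check $g(\nabla_l\psi,\eta)=0$ for $\eta$ ranging over a spanning set of $\Omega^4_1 \oplus \Omega^4_{27}$, using \eqref{first_covariant_psi} for the $\Omega^4_1$ part and \eqref{second_covariant_psi} together with the symmetry and tracelessness built into $\Omega^4_{27}$ for the remainder.
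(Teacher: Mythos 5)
Your argument is correct, and your preferred (main) proof takes a genuinely different route from the paper's. The paper literally repeats the coordinate computation of Lemma~\ref{Omega3} one degree up: it kills the $\Omega^4_1$ component with \eqref{first_covariant_psi}, and for the $\Omega^4_{27}$ component it pairs $\nabla_l\psi$ against $\eta=\tfrac13 h_{ij}g^{jl}\psi_{labc}\,dx^i\wedge dx^a\wedge dx^b\wedge dx^c$ and observes that the result is the contraction of the symmetric tensor $h^{lr}$ with $\nabla_l\psi_{rstu}\psi_{labc}g^{sa}g^{tb}g^{uc}$, which is skew-symmetric in $l$ and $r$ by \eqref{second_covariant_psi}; the pairing therefore vanishes for symmetry reasons alone, with no use of tracelessness. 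Your main proof instead derives the lemma from Lemma~\ref{Omega3} via $\nabla_X\psi=\nabla_X(\ast_\varphi\varphi)=\ast_\varphi(\nabla_X\varphi)$, which is legitimate: $\ast_\varphi$ is built pointwise from $g_\varphi$ and $\vol_\varphi$, both parallel for the Levi-Civita connection, so it commutes with $\nabla$ irrespective of the torsion of $\varphi$, and the paper's own convention defines the irreducible summands of $\Omega^4$ as the $\ast_\varphi$-images of those of $\Omega^3$, so $\ast_\varphi(\Omega^3_7)=\Omega^4_7$. Your route is shorter and more conceptual; the paper's route buys the explicit contraction identities that feed directly into Corollary~\ref{covariant_psi}. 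One detail in your sketch of the coordinate alternative is off: the contraction $\nabla_l\psi_{rstu}\psi_{labc}g^{sa}g^{tb}g^{uc}$ is not ``proportional to $g$'' in the free indices (being skew-symmetric in $l,r$, it could only be a multiple of the metric if it vanished identically), so the $\Omega^4_{27}$ pairing dies because a symmetric tensor meets a skew-symmetric one, and the tracelessness of $h$ plays no role in that step.
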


\begin{proof}
The proof is, basically, the same as in Lemma \ref{Omega3}. Considering $X=e_l$
and applying \eqref{first_covariant_psi}, we have
$$g(\nabla_l\psi,\psi)=\frac{1}{24}\nabla_l\psi_{rstu}\psi_{abcd}g^{ra}g^{sb}g^{tc}g^{ud}=0,$$
so $\nabla_l\psi\perp\Omega^4_1$. To see that   $\nabla_l\psi\perp \Omega^4_{27}$,
 consider some $\eta\in \Omega^4_{27}$  in local form, 
$$
\eta=\frac{1}{3!}h_{ij}g^{jl}\psi_{labc}dx^i\wedge dx^a\wedge dx^b\wedge dx^c,
$$
and take the inner product with $\nabla_l\psi$:
$$
g(\nabla_l\psi,\eta)=\frac{1}{3!}\nabla_l\psi_{rstu}h_i^l\psi_{labc}g^{ri}g^{sa}g^{tb}g^{uc}
=\frac{1}{3!}h^{lr}\nabla_l\psi_{rstu}\psi_{labc}g^{sa}g^{tb}g^{uc}
=0,
$$
using that, $h^{lr}=g^{ri}h_{ij}g^{jl}$ is a symmetric $(0,2)$-tensor (since $h_{ij}$ is a symmetric $(2,0)$-tensor), while    $\nabla_l\psi_{rstu}\psi_{labc}g^{sa}g^{tb}g^{uc}$ is skew-symmetric in $r$ and $l$, by \eqref{second_covariant_psi}. 
\end{proof}
Using Lemma \ref{lemma: covariant_psi2} above and the identity $\ast(X\lrcorner \psi)=\varphi\wedge X^\flat$ ( $X\in \Omega^0(M)$ ), where $X^\flat$ is the $1$--form defined by $X^\flat(Y)=g(X,Y)$, one has: 

\begin{corollary}\label{covariant_psi}
With the above notation,
        $$\nabla_l\psi_{rstu}=-T_{lr}\varphi_{stu}+T_{ls}\varphi_{rtu}-T_{lt}\varphi_{rsu}+T_{lu}\varphi_{rst}.$$

\end{corollary}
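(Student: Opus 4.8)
The plan is to derive the formula for $\nabla_l\psi_{rstu}$ from the general principle established in Lemma \ref{lemma: covariant_psi2}, namely that $\nabla_l\psi$ lies in $\Omega^4_7$, together with the companion description of $\nabla_l\varphi$ via the full torsion tensor $T_{lm}$. Since $\nabla_X\psi\in\Omega^4_7=\{X\lrcorner\varphi;\ X\in\Omega^0(TM)\}$, there is some vector field $V_l$ (depending linearly on $l$, hence a $2$-tensor) such that $\nabla_l\psi = V_l\lrcorner\varphi$, i.e. $\nabla_l\psi_{rstu}$ is, up to a scalar normalisation, the full contraction of some $2$-tensor with $\varphi$; the right-hand side of the claimed identity is precisely the skew-symmetrisation of $T_{l\bullet}\varphi_{\bullet\bullet\bullet}$ over $\{r,s,t,u\}$, so the content of the corollary is the identification of that vector field with (a multiple of) $T_{l\bullet}$ and the verification of the combinatorial sign pattern.

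First I would differentiate the defining relation $\psi=\ast\varphi$ in the form $\varphi\wedge X^\flat=\ast(X\lrcorner\psi)$, or equivalently use the pointwise identity $\ast(X\lrcorner\psi)=\varphi\wedge X^\flat$ cited just before the statement, applied to $X=e_n$: this expresses $e_n\lrcorner\psi$ as $\ast(\varphi\wedge e^n)$ and lets me pass freely between the $\Omega^3_7$ description of $\nabla_l\varphi$ and the $\Omega^4_7$ description of $\nabla_l\psi$. Second, I would recall the definition $\nabla_l\varphi_{abc}=T_{lm}g^{mn}\psi_{nabc}$ and apply $\nabla_l$ to the compatibility identity relating $\varphi$ and $\psi$ — concretely, to a contraction identity such as $\varphi_{abc}\psi_{nabc}=$ const (schematically $\varphi\cdot\psi\sim 0$ in the appropriate index pattern), or better yet differentiate the pointwise Hodge relation $\psi_{rstu}=\frac{1}{3!}\varepsilon_{rstu}{}^{abc}\varphi_{abc}$ in the $G_2$ frame. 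Since the volume form is parallel, $\nabla_l$ commutes with $\ast$, so $\nabla_l\psi=\ast(\nabla_l\varphi)$; substituting $\nabla_l\varphi = (T_l{}^n\, e_n)\lrcorner\psi$ and using $\ast(X\lrcorner\psi)=\varphi\wedge X^\flat$ converts this to $\nabla_l\psi = \varphi\wedge (T_l{}^n e^n)$, whose components are exactly $T_{lr}\varphi_{stu}$ antisymmetrised in $rstu$ with the alternating signs $+,-,+,-$ appearing in the statement.

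The main obstacle is purely bookkeeping: getting the numerical constant and all four signs correct. One must track (i) the normalisation constant $\frac{1}{3!}$ versus $\frac{1}{4!}$ in the component conventions fixed in Section \ref{sec: torsion tensor}, (ii) the sign produced by $\ast\ast$ on a $3$-form in dimension $7$ (which is $+1$), and (iii) the sign in $\ast(X\lrcorner\psi)=\varphi\wedge X^\flat$ against the chosen orientation \eqref{usual_G2-structure}. The cleanest way to pin these down is to evaluate both sides on the flat model $(\R^7,\varphi_0)$ at a single well-chosen index quadruple — for instance $(r,s,t,u)=(4,5,6,7)$ with $l=1$, where $\psi_0$ contributes $e^{4567}$ and the right-hand side reduces to a single term $-T_{14}\varphi_{567}+T_{15}\varphi_{467}-T_{16}\varphi_{457}+T_{17}\varphi_{456}$ — and then invoke $G_2$-equivariance to conclude the identity holds for all indices. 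Alternatively one may simply cite the analogous computation in \cite{karigiannis2009flows}. I would present the Hodge-star derivation as the main line and relegate the sign check to a one-line remark.
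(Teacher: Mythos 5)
Your argument is correct and is essentially the paper's own: the corollary is obtained exactly from Lemma \ref{lemma: covariant_psi2} together with the identity $\ast(X\lrcorner\psi)=\varphi\wedge X^\flat$, with the coefficient vector identified as $T_l{}^{n}e_n$ via $\nabla_l\psi=\ast(\nabla_l\varphi)$ and the defining relation $\nabla_l\varphi_{abc}=T_{lm}g^{mn}\psi_{nabc}$. The component expansion of $\varphi\wedge(T_l{}^{n}e^n)$ then yields precisely the stated alternating-sign formula, so your flat-model sign check is a reasonable but optional safeguard.
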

%To prove $ii)$, we know $\nabla_l\varphi_{abc}=T_{lm}g^{mn}\psi_{nabc}$$ %and by \eqref{contraction_psi2} follows
%\begin{align*}
%\nabla_l\varphi_{abc}\psi_{rstu}g^{sa}g^{tb}g^{uc}&=T_{lm}g^{mn}\psi_{nabc}\psi_{rstu}g^{sa}g^{tb}g^{uc}\\
%                                                  &=T_{lm}g^{mn}(24g_{nr})=24T_{lr}
%\end{align*}
For a torsion-free $\gt$--structure, the cross-product (\ref{eq: cross-product}) is parallel, so it satisfies the Leibniz rule    
$$
\nabla (u\times v)=\nabla u\times v+u\times \nabla v, 
\quad\forall u,v\in \Omega^0(TM).
$$ 
In general, the action of $\nabla$ on the cross product can be expressed in terms of the total torsion tensor:

\begin{lemma}\label{Leibniz_rule}
For the vector fields $u,v,w,z \in \Omega^0(TM)$, we have
\begin{enumerate}
        \item[\rm{(i)}]
        $\displaystyle\nabla_z(u\times v)=\nabla_z u\times v+u\times\nabla_z v+\sum_{m=1}^7T(z,e_m)\chi(e_m,u,v)$.

        \item[\rm{(ii)}]
        $R(w,z)(u\times v)=R(w,z)u\times v+u\times R(w,z)v+\mathcal{T}(w,z,u,v)$, where
\begin{align}\label{torsion_four_tensor}
\begin{split}
            \mathcal{T}(w,z,u,v)
            &:=\sum_{m=1}^7T(z,e_m)(\nabla_w\psi)(e_m,u,v,\cdot)^\sharp-T(w,e_m)             (\nabla_z\psi)(e_m,u,v,\cdot)^\sharp\\
            &\qquad+\Bigl((\nabla_wT)(z,e_m)-(\nabla_zT)(w,e_m)\Bigr)\chi(e_m,u,v)
\end{split}            
\end{align}
in an orthonormal local frame $\{e_1,...,e_7\}$  of $TM$.

\end{enumerate}

\begin{proof}
\begin{enumerate}[(i)]
        \item[(i)] 
        The proof goes along the lines of \cite[Lemma A.1]{gayet2014smooth}, using the fact that the torsion $\nabla\varphi$ takes values in $\Omega^3_7$ (c.f. Lemma \ref{Omega3}). Consider normal coordinates $x_1,...,x_7$  about a given $p\in M$ and an orthonormal frame $e_1,...,e_7$. At the point $p$, we have:
\begin{align*}
     \nabla_z(u\times v)&=\sum_{i=1}^7\nabla_z(\langle u\times v,e_i\rangle e_i)=\sum_{i=1}^7\nabla_z(\varphi(u,v,e_i)e_i)\\
                        &=\sum_{i=1}^7 z(\varphi(u,v,e_i))e_i +\varphi(u,v,e_i)\nabla_ze_i\\ &=\sum_{i=1}^7 \Big(\varphi(\nabla_z u,v,e_i)+\varphi(u,\nabla_z v,e_i)+\varphi(u,v,\nabla_ze_i)+(\nabla_z\varphi)(u,v,e_i)\Big)e_i\\
                        &=\sum_{i=1}^7 \biggl(\varphi(\nabla_z u,v,e_i)+\varphi(u,\nabla_z v,e_i)+
     \sum_{m=1}^7T(z,e_m)\psi(e_m,u,v,e_i)\biggr)e_i\\
                        &=\nabla_zu\times v+u\times\nabla_zv+\sum_{m=1}^7T(z,e_m)\chi(e_m,u,v).
\end{align*}
Notice that we used $(\nabla_j e_i)_p=0$ in the third and fourth equalities.
        \item[(ii)]
        Using the first part, we have
\begin{eqnarray*}
     \nabla_w\nabla_z(u\times v)
     &=&\nabla_w\nabla_zu\times v+\nabla_zu\times\nabla_wv+\nabla_wu\times
                                 \nabla_zv+u\times\nabla_w\nabla_zv\\
     &&+\sum_{i,m=1}^7\biggl(T(w,e_m)\Bigl(\psi(e_m,\nabla_zu,v,e_i) 
                                 +\psi(e_m,u,\nabla_zv,e_i)\Bigr)\\
     &&\qquad+\Bigl((\nabla_wT)(z,e_m)+T(\nabla_wz,e_m)\Bigr)\psi(e_m,u,v,e_i)\\
     &&\qquad+T(z,e_m)\Bigl(\psi(e_m,\nabla_wu,v,e_i)+\psi(e_m,u,\nabla_wv,e_i)\\
     &&\qquad+(\nabla_w\psi)(e_m,u,v,e_i)\Bigr)\biggr)e_i.
\end{eqnarray*}
Using the symmetries of the curvature tensor 
$R(w,z)=\nabla_w\nabla_z-\nabla_z\nabla_w-\nabla_{[w,z]}$ and the fact that $\nabla$ is torsion-free, one has $[w,z]=\nabla_wz-\nabla_zw$, and we compute
\begin{align*}
     R(w,z)(u\times v)
     \quad=\quad & R(w,z)u\times v+u\times R(w,z)v\\
     &   +\sum_{i,m=1}^7\biggl(T(z,e_m)(\nabla_w\psi)
                       (e_m,u,v,e_i)\\
     &\qquad+\Bigl((\nabla_wT)(z,e_m)-(\nabla_zT)(w,e_m)\Bigr)\psi(e_m,u,v,e_i)\\
     &\qquad-T(w,e_m)(\nabla_z\psi)(e_m,u,v,e_i)\biggr)e_i \qedhere
\end{align*}
\end{enumerate}
\end{proof}
\end{lemma}

\section{The Fueter-Dirac Weitzenb\"ock formula}
\label{sec: General Fueter-Dirac Weitzenbock}

We now address the general framework proposed by Akbulut and Salur \cite{akbulut2008calibrated,akbulut2008deformations},
in which the role of torsion in the associative deformation theory is captured by a \emph{twisted} Fueter-Dirac operator. 
Given an associative submanifold  $Y^3$ in $ (M,\varphi)$, the $\gt$--structure induces connections on the bundles $NY$ and $TY$. Moreover, Proposition \ref{prop: NY=S+ x S-} gives an identification $NY\cong \real(S^+\otimes_{\mathbb{C}}S^-)$, with the respective reductions $\Lambda^2_\pm(NY)\cong \mathfrak{su}(S^\pm)=\ad(S^\pm)$. We will refer to elements in the kernel  $\ker \dirac$  of the Dirac operator (\ref{diraccomposition}) as harmonic spinors twisted by $S^-$, or simply, \emph{twisted harmonic spinors}.

Denote by $\mathcal{A}(S^\pm)$ the space of connections on each spinor bundle $S^\pm$, and let $A_0\in \Omega^1(Y,\mathfrak{so}(4))$ be the induced connection on  $NY$, so that
the  isomorphism $\mathfrak{so}(4)\cong \mathfrak{so}(3)\oplus \mathfrak{so}(3)$ gives a decomposition $A_0=A_0^+\oplus A_0^-$, with $A_0^\pm \in \mathcal{A}(S^\pm)$.
 Fixing these reference connections,  each $\mathcal{A}(S^\pm)$ is an affine space modelled on $\Omega^1(Y,\ad(S^\pm))$, so a connection $A^\pm \in \mathcal{A}(S^\pm)$ is of the  form 
$$
A^\pm=A_0^\pm+a^\pm 
\qforq
a^\pm \in \Omega^1(Y,\ad(S^\pm)).
$$ 
Thus a connection on $NY$ has the form
$$A=A_0+a=(A_0^++a^+)\oplus (A_0^-+a^-) \qforq a\in \Omega^1(Y,\ad(NY)).
$$ 
%We can encode this dependence by the notation $\mathbb{A}=\mathbb{A}(a^+,a^-)$ and $\mathbb{A}_0=\mathbb{A}(0,0)=A_0^+\oplus A_0^-$.
%[E DA\'I? EU N\~AO VI ISSO SER USADO DEPOIS EM LUGAR NENHUM.]\\
Now, using the Clifford multiplication (indeed the cross-product), we define the \emph{twisted Dirac operator}
\begin{eqnarray*}
        \fueter:=\sum_{j=1}^3 e_i\times \nabla_{e_i}
        &\colon&
         \Omega^0(NY)\rightarrow \Omega^0(NY)  
\end{eqnarray*}
where $\nabla:=\nabla_A$ is given by a connection on $NY$ and the normal
sections in $\ker (\fueter)$ are called \emph{harmonic spinors twisted by $(S^-,A)$}. The following Definition is adopted from \cite{akbulut2008calibrated}:

\begin{definition}
Let $Y$ be an associative submanifold of $(M,\varphi)$. The \emph{Fueter-Dirac operator} associated with $Y$ is
\begin{equation}\label{fueter_dirac_operator}
\fueter\sigma:= \sum_{i=1}^3 e_i\times \n_{e_i}^\perp \sigma -e_i\times a(e_i)(\sigma),
\end{equation}
where $a\in \Omega^1(Y,\ad(NY))$ defined by $a(e_i)(\sigma)=(\nabla_\sigma (e_i))^\perp$ is the normal component of $\nabla_\sigma(e_i)$, and $\nabla$ is the Levi-Civita connection on $M$.
\end{definition}   

We know from \cite[Theorem 6]{akbulut2008calibrated} that the linearisation of the deformation problem for an associative submanifold $Y$ of $(M,\varphi)$ at  $Y$ is identified with $\ker \fueter$, so this space is called the \emph{infinitesimal deformation space} of $Y$.
Our motivation is precisely the expectation that a Weitzenböck formula for \eqref{fueter_dirac_operator}, in favourable cases at least, can give  information about the deformation space $\ker \fueter$.

\subsection{The general squared Fueter-Dirac operator}

\begin{lemma}
Let $\{e_1,e_2,e_3\}$ and $\{\eta_4,...,\eta_7\}$ be orthonormal frames of the vector bundles $TY$ and $NY$, respectively. Then
\begin{equation}\label{D_with_torsion}
        \fueter\sigma
        =
        \sum_{i=1}^3 e_i\times \n_{e_i}^\perp \sigma 
        - 
        \sum_{k=4}^7(\nabla_{\sigma} \psi)(\eta_k,e_1,e_2,e_3)\eta_k
\end{equation}
\end{lemma}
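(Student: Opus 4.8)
The plan is to unwind the definition of the Fueter-Dirac operator \eqref{fueter_dirac_operator} and replace the term $e_i\times a(e_i)(\sigma)$, where $a(e_i)(\sigma)=(\nabla_\sigma e_i)^\perp$, by the torsion expression on the right-hand side of \eqref{D_with_torsion}. Since the first summand $\sum_i e_i\times\n_{e_i}^\perp\sigma$ is common to both sides, the whole statement reduces to the pointwise identity
\begin{equation*}
\sum_{i=1}^3 e_i\times (\nabla_\sigma e_i)^\perp
=
\sum_{k=4}^7 (\bar\nabla_\sigma\psi)(\eta_k,e_1,e_2,e_3)\,\eta_k
\end{equation*}
at the chosen point $p$, evaluated in the frame with property \eqref{geodesic_frame}. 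Here $\bar\nabla$ denotes the ambient Levi-Civita connection $\nabla$ on $M$; I will write out the steps in that notation.

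First I would compute the normal vector $\sum_i e_i\times(\nabla_\sigma e_i)^\perp$ by pairing with an arbitrary normal frame vector $\eta_k$ and using $\varphi(u,v,w)=\langle u\times v,w\rangle$, so that the $\eta_k$-component equals $\sum_{i=1}^3\varphi(e_i,(\nabla_\sigma e_i)^\perp,\eta_k)$. Because $\varphi$ restricted to $TY$ is the volume form and $e_i\in TY$ while $\eta_k\in NY$, only the normal part of $\nabla_\sigma e_i$ contributes to $\varphi(e_i,\cdot,\eta_k)$, so I may drop the projection and write this as $\sum_i\varphi(e_i,\nabla_\sigma e_i,\eta_k)$. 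Next I would differentiate the associative identity $\varphi(e_1,e_2,e_3)\equiv 1$ and, more usefully, the relations encoding $\chi|_Y\equiv 0$ (equivalently $TY\cong\Lambda^2_+(NY)$ from Lemma \ref{lemma: TY=Lambda+}): the key input is that, along $Y$, $\psi(\eta_k,e_1,e_2,e_3)$ and similar mixed components vanish or are controlled, and that $(\bar\nabla_\sigma\psi)$ appears when one differentiates $\psi(\eta_k,e_1,e_2,e_3)=0$ and collects the terms $\psi(\eta_k,\nabla_\sigma e_1,e_2,e_3)+\psi(\eta_k,e_1,\nabla_\sigma e_2,e_3)+\psi(\eta_k,e_1,e_2,\nabla_\sigma e_3)+\psi(\nabla_\sigma\eta_k,e_1,e_2,e_3)$. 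The frame condition \eqref{geodesic_frame} kills the intrinsic parts of the $\nabla_{e_i}e_j$, but here the derivative is in the direction $\sigma\in NY$, so $\nabla_\sigma e_i$ need not vanish; its tangential part drops out of $\psi(\eta_k,\cdot,e_2,e_3)$ because $\psi$ restricted to $TY\oplus\langle\eta_k\rangle$ vanishes (three tangent vectors plus one normal lies in the zero component of $\psi$ by the local form \eqref{varphi_2} of $\varphi$ and its dual). Hence only the normal parts $(\nabla_\sigma e_i)^\perp$ survive, and I can match them against the associator identity $\chi(e_i,e_j,\eta_k)$-type expressions to turn $\psi(\eta_k,(\nabla_\sigma e_i)^\perp,e_j,e_l)$ into $\langle (\nabla_\sigma e_i)^\perp\times e_j, \cdot\rangle$ and ultimately into $\varphi(e_i,(\nabla_\sigma e_i)^\perp,\eta_k)$ after using the $\Lambda^2_+$ identification and the cross-product structure among $\{e_i,\eta_k\}$.

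Concretely, the cleanest route is: (1) start from $0=\bar\nabla_\sigma\bigl(\psi(\eta_k,e_1,e_2,e_3)\bigr)$ since $\psi(\eta_k,e_1,e_2,e_3)$ vanishes identically on $Y$ (all four arguments: one normal, three tangent — zero component of $\psi_0$ in the splitting \eqref{varphi_2}); (2) expand by Leibniz to get $(\bar\nabla_\sigma\psi)(\eta_k,e_1,e_2,e_3)=-\psi(\nabla_\sigma\eta_k,e_1,e_2,e_3)-\sum_{i}\psi(\eta_k,e_1,\dots,\nabla_\sigma e_i,\dots,e_3)$; (3) observe $\psi(\nabla_\sigma\eta_k,e_1,e_2,e_3)=0$ because its tangential part $(\nabla_\sigma\eta_k)^\top$ gives three-plus-one-tangent again and its normal part gives four normal vectors paired with $e_1,e_2,e_3$ — wait, no: $(\nabla_\sigma\eta_k)^\perp$ with $e_1,e_2,e_3$ is again one-normal-three-tangent, still zero; so that term drops entirely; (4) for each remaining $\psi(\eta_k,e_1,\dots,\nabla_\sigma e_i,\dots,e_3)$, split $\nabla_\sigma e_i=(\nabla_\sigma e_i)^\top+(\nabla_\sigma e_i)^\perp$, note the tangential part again yields a vanishing component, so only $(\nabla_\sigma e_i)^\perp$ contributes; (5) use the identity $\psi(\eta_k,(\nabla_\sigma e_i)^\perp,e_2,e_3)=\varphi(e_i,(\nabla_\sigma e_i)^\perp,\eta_k)$ up to the correct sign — this comes from the local forms \eqref{varphi_2}, matching pairs $(e_1\leftrightarrow\eta^{45}+\eta^{67})$, etc., i.e. precisely the content of $TY\cong\Lambda^2_+(NY)$. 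Summing over $i$ and $k$ then yields the claimed formula.

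The main obstacle I anticipate is the bookkeeping of signs and of which mixed components of $\psi$ vanish along $Y$: one must be careful that $\psi(\eta_k,e_i,e_j,\eta_l)$-type components (two tangent, two normal) are exactly the self-dual forms $\omega_i$ appearing in \eqref{varphi_2} and Lemma \ref{lemma: TY=Lambda+}, so that $\psi(\eta_k,(\nabla_\sigma e_i)^\perp,e_j,e_l)$ — which has two tangent ($e_j,e_l$) and two normal ($\eta_k,(\nabla_\sigma e_i)^\perp$) arguments — does contribute, while $\psi$ with one or three tangent arguments vanishes. Getting the index $i$ to land in the tangential slot after differentiation, tracking the cyclic signs $(e_1,e_2,e_3)$ versus $-(e_3,\ldots)$ from \eqref{varphi_2}, and confirming the overall minus sign in \eqref{D_with_torsion} (which matches $a(e_i)(\sigma)=(\nabla_\sigma e_i)^\perp$ entering \eqref{fueter_dirac_operator} with a minus) is the delicate part; everything else is a routine Leibniz expansion exploiting the frame condition \eqref{geodesic_frame} only mildly (just enough to discard intrinsic Christoffel terms in tangential directions, which do not even appear since the active derivative direction $\sigma$ is normal).
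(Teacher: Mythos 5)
Your proposal is correct and is essentially the paper's own argument run in the reverse direction: the paper starts from $\sum_i e_i\times(\nabla_\sigma e_i)^\perp$, rewrites it via the associator $\chi$ and pairs with $\eta_k$ to get $\sum_i\psi(\ldots,\nabla_\sigma e_i,\ldots,\eta_k)$, then applies Leibniz to the vanishing component $\psi(e_1,e_2,e_3,\eta_k)\equiv 0$, whereas you differentiate that same vanishing identity first and then convert the surviving two-normal--two-tangent components of $\psi$ back into cross products. The key ingredients — associativity killing the one-normal--three-tangent components of $\psi$, the Leibniz rule for $\nabla_\sigma\psi$, and the relation $\psi(u,v,w,z)=\langle\chi(u,v,w),z\rangle$ — are identical, and your sign bookkeeping (e.g.\ $-\psi(\eta_k,(\nabla_\sigma e_1)^\perp,e_2,e_3)=\langle e_1\times(\nabla_\sigma e_1)^\perp,\eta_k\rangle$) checks out.
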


\begin{proof}
Since $A_0$ is the connection induced on $NY$ by the Levi-Civita connection on $M$ given by the $\gt$ metric $g_{\varphi}$, we have $\nabla_{A_0}=\nperp$. Now, for each $\sigma\in \Omega^0(NY)$, 
\begin{align*}
        \sum_{i=1}^3e_i\times a(e_i)(\sigma)
        &= e_1\times (\nabla_{\sigma}e_1)^\perp +e_2\times (\nabla_{\sigma}e_2)^\perp +e_3\times (\nabla_{\sigma}e_3)^\perp\\
        &=(e_2\times e_3)\times (\nabla_{\sigma}e_1)^\perp +(e_3\times e_1)\times (\nabla_{\sigma}e_2)^\perp +(e_1\times e_2)\times (\nabla_{\sigma}e_3)^\perp\\
        &=\chi((\nabla_{\sigma}e_1)^\perp,e_2,e_3)+\chi((\nabla_{\sigma}e_2)^\perp,e_3,e_1)+\chi((\nabla_{\sigma}e_3)^\perp,e_1,e_2)\\
        &= (\diamondsuit).
\end{align*}
Since $Y$ is associative exactly when $\chi|_{TY}=0$, this implies 
$$
\chi((\nabla_{\sigma}e_i)^\perp,e_j,e_k)=\chi(\nabla_{\sigma}e_i,e_j,e_k).
$$
Furthermore, the section $\chi(\nabla_{\sigma}(e_i),e_j,e_k)$ lies on the normal component, so
\begin{align*}
(\diamondsuit) 
        &=\sum_{k=4}^7 (\langle \chi(\nabla_{\sigma}(e_1),e_2,e_3),\eta_k\rangle+\langle \chi(e_1,\nabla_{\sigma}(e_2),e_3),\eta_k\rangle+\langle\chi(e_1,e_2,\nabla_{\sigma}(e_3)),\eta_k\rangle )\eta_k \\
        &=\sum_{k=4}^7(-(\nabla_{\sigma}\psi)(e_1,e_2,e_3,\eta_k)+\sigma(\psi(e_1,e_2,e_3,\eta_k))-\psi(e_1,e_2,e_3,\nabla_{\sigma}(\eta_k)))\eta_k\\
        &=\sum_{k=4}^7((\nabla_{\sigma}\psi)(\eta_k,e_1,e_2,e_3)\eta_k. 
\end{align*}
To obtain the second equality we used the covariant derivative of $\psi$:
$$
  (\nabla_{\sigma}\psi)(e_1,e_2,e_3,\eta_k)=\sigma(\psi(e_1,e_2,e_3,\eta_k))-\psi(\nabla_\sigma e_1,e_2,e_3,\eta_k)-\dots -\psi(e_1,e_2,e_3,\nabla_\sigma\eta_k)
$$
and equation \eqref{psi_associator}, and for the last one we used the skew-symmetry of $\nabla_\sigma\psi$ and the associativity condition  $\chi(e_1,e_2,e_3)=0$.
\end{proof}

\begin{corollary}
If $\varphi$ is torsion free (i.e. $\nabla\varphi=0$), then $a=A-A_0=0$. 
\end{corollary}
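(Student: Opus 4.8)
The plan is to read the claim straight off the identity \eqref{D_with_torsion} established in the preceding Lemma. The only ingredient needed is that the torsion-free hypothesis on $\varphi$ is inherited by its Hodge dual: since the Levi-Civita connection of $g=g_\varphi$ is metric and preserves the orientation, the Hodge star is parallel, so $\nabla\varphi=0$ forces $\nabla\psi=\nabla(\ast\varphi)=\ast(\nabla\varphi)=0$. In particular $\bar{\nabla}_{\sigma}\psi=0$ for every normal section $\sigma\in\Omega^0(NY)$.

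First, then, I would feed $\bar{\nabla}_{\sigma}\psi=0$ into \eqref{D_with_torsion}: the torsion summand drops out and
$$\fueter\sigma=\sum_{i=1}^3 e_i\times\n_{e_i}^\perp\sigma\qquad\text{for all }\sigma\in\Omega^0(NY).$$
By the composition \eqref{diraccomposition}, the right-hand side is exactly the Fueter--Dirac operator $\dirac$ attached to the induced normal connection $A_0$ (recall $\nabla_{A_0}=\nperp$). Equivalently, from the proof of the Lemma, the defect term in \eqref{fueter_dirac_operator} is $\sum_{i=1}^3 e_i\times a(e_i)(\sigma)=\sum_{k=4}^7(\bar{\nabla}_{\sigma}\psi)(\eta_k,e_1,e_2,e_3)\eta_k=0$. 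In either reading, $\fueter$ coincides with the twisted Dirac operator of $A_0$, so in the splitting $A=A_0+a$ one gets $a=A-A_0=0$; this recovers McLean's original description of the associative deformation operator on a $G_2$-manifold \cite{mclean1996deformations}.

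I do not expect a real obstacle: the argument is a one-line substitution into \eqref{D_with_torsion}. Two points merit a remark. The first is the parallelism $\nabla(\ast\varphi)=\ast(\nabla\varphi)$, which I would simply cite as standard Riemannian geometry. The second is the interpretation of the conclusion ``$a=0$'': it records that the torsion-induced correction in \eqref{fueter_dirac_operator} --- the only way in which $a$ enters $\fueter$ --- vanishes identically, so that $\fueter$ collapses to the connection Dirac operator $\dirac$ of $A_0$.
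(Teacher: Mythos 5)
Your proposal is correct and is essentially the paper's (implicit) argument: the corollary is stated without proof as an immediate consequence of the preceding Lemma, and your substitution of $\nabla\psi=\ast(\nabla\varphi)=0$ into \eqref{D_with_torsion} to kill the correction term $\sum_i e_i\times a(e_i)(\sigma)$ is exactly the intended reasoning. Your closing remark on how to read ``$a=0$'' (as the vanishing of the torsion-induced correction to $\fueter$, so that $\fueter=\dirac_{A_0}$) is the right interpretation of the statement.
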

The purpose of this Section is to study in detail the expression for the
squared Fueter-Dirac operator obtained from \eqref{D_with_torsion}. Fix $p\in Y$ and choose  local orthonormal frames $\{e_1,e_2,e_3\}$ and $\{\eta_4,\eta_5,\eta_6,\eta_7\}$  of $TY$ and $NY$, respectively, such that 
        \begin{equation}\label{geodesic_frame}
        (\nabla_{e_i}{e_j})_p=(\nabla_{e_i}{\eta_k})_p=(\nabla_{\eta_l}{\eta_k})_p=0
        \end{equation}
        for all $i,j=1,2,3$ and $k,l=4,5,6,7$. Observe that, for any sections $\sigma,\eta\in \Omega^0(TM|_Y)$, one has
        \begin{equation}\label{splitting}
        \nabla_\sigma (\eta)\in \Omega^0(TM|_Y)=\Omega^0(TY)\oplus \Omega^0(NY),
        \end{equation}
        so both  tangent and normal components of \eqref{geodesic_frame} vanish at $p$. % From now on, we  adopt frames with the property \eqref{geodesic_frame}.         with the property \eqref{geodesic_frame}. 
        Then the following holds  at $p$:
\begin{eqnarray*}\label{eq: D_A^2}
        \fueter^2\sigma 
        &=&\sum_{i,j=1}^3 e_i\times\n_i^\perp(e_j\times \n_j^\perp \sigma)-\sum_{j=1}^3\sum_{k=4}^7(\nabla_{e_j\times\n_j^\perp\sigma}\psi)(\eta_k,e_1,e_2,e_3)\eta_k\\
        &&-\sum_{i=1}^3\sum_{l=4}^7 e_i\times\n_i^\perp\{(\nabla_{\sigma}\psi)(\eta_l,e_1,e_2,e_3)\eta_l \} 
        +\sum_{k,l=4}^7(\nabla_{(\nabla_{\sigma}\psi)(\eta_l,e_1,e_2,e_3)\eta_l}\psi)(\eta_k,e_1,e_2,e_3)\eta_k\\
        &=&
        \underbrace{\sum_{i,j=1}^3 e_i\times(e_j\times\n_i^\perp\n_j^\perp\sigma)}_{\rm{(I)}}+\underbrace{\sum_{i,j,l=1}^3\sum_{m=4}^7T(e_i,e_l)\psi(e_l,e_j,\n_j^\perp\sigma,\eta_m)e_i\times \eta_m}_{\rm{(II)}}\\
&&-\underbrace{\sum_{j=1}^3\sum_{k,n=4}^7\varphi(e_j,\n_j^\perp\sigma,\eta_n)(\nabla_{\eta_n}\psi)(\eta_k,e_1,e_2,e_3)\eta_k}_{\rm{(III)}}
-\underbrace{\sum_{i=1}^3\sum_{l=4}^7 e_i(\nabla_{\sigma}\psi(\eta_l,e_1,e_2,e_3)) e_i\times \eta_l}_{\rm{(IV)}}\\
&&+\underbrace{\sum_{k,l=4}^7(\nabla_{\sigma}\psi)(\eta_l,e_1,e_2,e_3)(\nabla_{\eta_l}\psi)(\eta_k,e_1,e_2,e_3)\eta_k}_{\rm{(V)}}.
\end{eqnarray*}
To obtain  (I) and (II) we used Lemma \ref{Leibniz_rule} (i) and the property $(\nabla_ie_j)_p=0$, whereas (IV) follows from the Leibniz rule for $\nabla^\perp$ and $(\nabla_i\eta_k)_p=0$.

\begin{lemma}\label{lem: calculation of (I)}
Denoting by  $\nabla^\ast\nabla$  the Laplacian of the connection $\nperp$,
by $k$ the scalar curvature of $Y$, by $F^-$ the curvature associated to the spin bundle $S^-$, and by $\overline{\rho}$ the natural extension $\rho\otimes \mathbb{1}_{\End(S^-)}$ of $\rho:\Omega^2(Y)\rightarrow \End(S^+)$, one
has
$$
\mathrm{(I)} = \nabla^\ast\nabla\sigma+\frac{k}{4}\sigma + \overline{\rho}(F^-)\sigma.
$$
\end{lemma}
\begin{proof}
 In terms of an orthonormal frame   $\{e_1,e_2,e_3\}$ of $TY$,  
\begin{align*}
\mathrm{(I)} &=\sum_{i=1}^3 e_i\times(e_i\times\n_i^\perp\n_i^\perp\sigma )+\sum_{\substack{i,j=1\\ i\neq j}}^3 e_i\times(e_j\times\n_i^\perp\n_j^\perp\sigma )\\
    &=-\sum_i \n_i^\perp\n_i^\perp\sigma -\sum_{i\neq j} (e_i\times e_j)\times\n_i^\perp\n_j^\perp\sigma\\
    &=-\sum_i \n_i^\perp\n_i^\perp\sigma -\nabla_{\nabla^\top_ie_i}^\perp\sigma-\sum_{i<j} (e_i\times e_j)\times(\n_i^\perp\n_j^\perp-\n_j^\perp\n_i^\perp-\nabla_{[e_i,e_j]}^\perp)\sigma\\
    &=\nabla^\ast\nabla\sigma-\sum_{i<j} (e_i\times e_j)\times R^\perp(e_i,e_j)\sigma.
\end{align*}
Here $R^\perp\in \Omega^0(\Lambda^2 T^\ast Y\otimes \End(NY))$ is the normal curvature of $Y$:
        \begin{equation}\label{normal_curvature}
                  R^\perp(e_i,e_j)\sigma=(\n_i^\perp\n_j^\perp-\n_j^\perp\n_i^\perp-\nabla_{[e_i,e_j]}^\perp)\sigma.
        \end{equation}
        To obtain the second equality, we used  \eqref{def_associator} in each term of the form
\begin{align*}
  e_i\times(e_i\times\n_i^\perp\n_i^\perp\sigma )=&-\chi(e_i,e_i,\n_i^\perp\n_i^\perp\sigma )-\langle e_i,e_i\rangle\n_i^\perp\n_i^\perp\sigma+\langle e_i,\n_i^\perp\n_i^\perp\sigma\rangle e_i\\
  =&-\n_i^\perp\n_i^\perp\sigma.
\end{align*}
Moreover, for $i\neq j$,
\begin{align*}
e_i\times(e_j\times\n_i^\perp\n_j^\perp\sigma )=&-\chi(e_i,e_j,\n_i^\perp\n_j^\perp\sigma )-\langle e_i,e_j\rangle\n_i^\perp\n_j^\perp\sigma+\langle e_i,\n_i^\perp\n_j^\perp\sigma\rangle e_j\\
 =&-\chi(\n_i^\perp\n_j^\perp\sigma, e_i,e_j)=\n_i^\perp\n_j^\perp\sigma\times(e_i\times e_j)\\
 =&-(e_i\times e_j)\times\n_i^\perp\n_j^\perp\sigma.
\end{align*}

Since  $\nabla:=\nabla^{S^+\otimes S^-}=\nabla^+\otimes \mathbb{1}_{S^-} +\mathbb{1}_{S^+}\otimes \nabla^-$ agrees with the induced connection $\nperp$, one has  $R^\perp=F^\nabla =F^+\otimes \mathbb{1}_{S^-} +\mathbb{1}_{S^+}\otimes F^-$, where $F^\pm$ is the curvature of the connection $\nabla^\pm$. Now, using Proposition \ref{prop: NY=S+ x S-}, we identify the normal section $\sigma$ with the section $\kappa\otimes \varepsilon\in\Omega^0(S^+\otimes S^-)$, and recall that the Clifford product of the normal bundle $\gamma(e_i)\sigma=e_i\times\sigma$ coincides with Clifford multiplication
$$
 \tau:=\Gamma_0\otimes \mathbb{1}_{S^-}: TY\rightarrow \End(S^+\otimes S^-), 
$$
where $\Gamma_0$ is the spin structure on $TY$. Defining 
$$
 \mathcal{R}(\kappa\otimes \varepsilon) := \frac{1}{2}\sum_{ij} \tau(e_i)\tau(e_j)F_{ij}^\nabla(\kappa\otimes \varepsilon),
$$
and using \eqref{def_associator}, we have
\begin{align*}
-\sum_{i<j}(e_i\times e_j)\times R^\perp(e_i,e_j)\sigma=&\sum_{i<j}e_i\times( e_j\times R^\perp(e_i,e_j))\sigma\\
                                                 =&\frac{1}{2}\sum_{ij}\gamma(e_i)\gamma(e_j)(R^\perp(e_i,e_j)\sigma) \\
                                                 =&\frac{1}{2}\sum_{ij}\tau(e_i)\tau(e_j)F^\nabla_{ij}(\kappa\otimes \varepsilon)=\mathcal{R}(\kappa\otimes \varepsilon)
\end{align*}
Therefore,
\begin{align*}
\mathcal{R}(\kappa\otimes \varepsilon) &= \frac{1}{2}\sum_{ij} (\Gamma_0 \otimes \mathbb{1}_{S^-})(e_i)(\Gamma_0\otimes \mathbb{1}_{S^-})(e_j)F_{ij}^\nabla(\kappa\otimes \varepsilon) \\ 
                                   &= \frac{1}{2}\sum_{ij} (\Gamma_0\otimes \mathbb{1}_{S^-})(e_i)(\Gamma_0\otimes \mathbb{1}_{S^-})(e_j)(F_{ij}^+\kappa\otimes \varepsilon+\kappa\otimes F^-_{ij}\varepsilon) \\
                                   &= \underbrace{\frac{1}{2}\sum_{ij} (\Gamma_0(e_i)\Gamma_0(e_j)F_{ij}^+\kappa)\otimes \varepsilon}_{\mathrm{(I')}}+\underbrace{\frac{1}{2}\sum_{ij} (\Gamma_0(e_i)\Gamma_0(e_j)\kappa)\otimes F^-_{ij}\varepsilon.}_{\mathrm{(I'')}}                                  
\end{align*}
Each endomorphism $F^+_{ij}: \Omega^0(S^+)\rightarrow \Omega^0(S^+)$ is given by the formula (c.f. \cite[Theorem 4.15.]{lawson1989spin}) 
\begin{equation*}
F^+_{ij}\kappa=\frac{1}{2}\sum_{k<l}\langle R_{ij}(e_k),e_l\rangle \Gamma_0(e_k)\Gamma_0(e_l)\kappa,
\end{equation*}
where $R_{ij}=R(e_i,e_j)$  is the Riemann tensor of the induced connection on $Y$, with components  $R_{ijk}^l=\langle R_{ij}(e_k),e_l\rangle$. Then, for the first term,
\begin{eqnarray*}
\mathrm{(I')}&=& \frac{1}{8}\sum_{ijkl} R_{ijk}^l\Gamma_0(e_i)\Gamma_0(e_j)\Gamma_0(e_k)\Gamma_0(e_l)\kappa \\
    &=& \frac{1}{8}\sum_l \biggl(\sum_{ij,(i=k)} R_{iji}^l \Gamma_0(e_i)\Gamma_0(e_j)\Gamma_0(e_i)+\sum_{ij,(j=k)} R_{ijj}^l    \Gamma_0(e_i)\Gamma_0(e_j)\Gamma_0(e_j)\\  
    &&+\frac{1}{3}\sum_{i\neq j\neq k\neq i} (R_{ijk}^l+R_{jki}^l+R_{kij}^l)\Gamma_0(e_i)\Gamma_0(e_j)\Gamma_0(e_k)
    \biggr)\Gamma_0(e_l)\kappa\\
    &=&\frac{1}{8}\sum_l \biggl( \sum_{ij} R_{iji}^l \Gamma_0(e_j)-\sum_{ij} R_{ijj}^l\Gamma_0(e_i)\biggr)\Gamma_0(e_l)\kappa \\
    &=&\frac{1}{4}\sum_{ijl} R_{iji}^l \Gamma_0(e_j)\Gamma_0(e_l)\kappa =\frac{1}{8}\sum_{ijl} R_{iji}^l (\Gamma_0(e_j)\Gamma_0(e_l)+\Gamma_0(e_l)\Gamma_0(e_j))\kappa \\
    &=&-\frac{1}{4}\sum_{ijl} R_{iji}^l \delta_j^l\kappa=-\frac{1}{4}\sum_{ij} R_{iji}^j\kappa =\frac{1}{4}k\kappa, 
\end{eqnarray*}
where $k$ denotes the scalar curvature of $Y$. For the second term, recall that $\Omega ^2(Y,\End(S^-))\cong \Omega ^2(Y)\otimes \Omega ^0(\End(S^-))$, so 
$$
F^-=\sum_{i<j}(e_i\wedge e_j) \otimes F^-_{ij}.
$$
Moreover, observe that $\Gamma_0$ (also $\gamma$) induces a map 
$\rho: \Omega^2 (Y) \rightarrow \End(S^+)$ defined
by\begin{equation*}
\rho(\sum_{i<j}\eta_{ij}e_i\wedge e_j):=\sum_{i<j} \eta_{ij} \Gamma_0(e_i)\Gamma_0(e_j)
\end{equation*} 
and consider the extension
\begin{equation}\label{representaçao}
\overline{\rho}:=\rho \otimes \mathbb{1}_{\End(S^-)}: \Omega^2(Y,\End(S^-))  \rightarrow \End(S^+\otimes S^-)
\end{equation}
given  by 
$$
\overline{\rho}(\sum_{i<j}(e_i\wedge e_j)\otimes F^-_{ij}):=\sum_{i<j} (\Gamma_0(e_i)\Gamma_0(e_j)\otimes F^-_{ij}).
$$
Then, 
\begin{align*}
\mathrm{(I'')}&=\frac{1}{2}\sum_{ij} (\Gamma_0(e_i)\Gamma_0(e_j)\otimes F_{ij}^-)(\kappa\otimes \varepsilon)\\
   &=\frac{1}{2}\overline{\rho}(\sum_{ij}(e_i\wedge e_j)\otimes F_{ij}^-)(\kappa\otimes \varepsilon)\\
   &=\overline{\rho}(F^-)(\kappa\otimes \varepsilon).\qedhere
\end{align*}
\end{proof}

%%%%%%%%%%%%%%%%%%%%%%%%%%%%%%%%%%%%%%%%%%%%%%%%%%%%%%%%%%%
\subsection{First-order corrections}\label{1st_corrections}

The correction terms (II),...,(V) can be conveniently organised into three \nth{1} order differential operators $P_1,P_2,P_3$ on sections of $NY$. 
\begin{lemma} \label{lem: calculation of (II)}
$$
  \mathrm{(II)}=P_1(\sigma):=\sum_{i,j=1}^3 T_{ii} e_j\times\n_j^\perp\sigma-T_{ji} e_j\times\n_i^\perp\sigma-2\sum_{(i,j,k)\in S_3^0}^3C_{ij}\n_k^\perp\sigma,
$$
where $S_3^0$ are the even permutations in $S_3$,  
$T_{ji}$ is the full torsion tensor and $C_{ij}$ the anti-symmetric part of $T_{ij}$.
\begin{proof}
By Lemma \ref{Omega3}, we have 
$$
\mathrm{(II)}=\sum_{i,j,n=1}^3\sum_{k=4}^7T(e_i,e_n)\psi(e_n,e_j,\n_j^\perp\sigma,\eta_k)e_i\times \eta_k=(\ast).
$$
Since $\chi(e_n,e_j,\n_j^\perp\sigma)\in \Omega^0(NY)$, then using \eqref{def_associator} we have
\begin{align*}
(\ast)=&\sum_{i,j,n=1}^3 T(e_i,e_n)e_i\times\chi(\n_j^\perp\sigma,e_n,e_j)=\sum_{i,j,n=1}^3 -T(e_i,e_n)e_i\times(\n_j^\perp\sigma\times(e_n\times e_j))\\
      =&\sum_{i,j,n=1}^3 T(e_i,e_n)\chi(e_i,\n_j^\perp\sigma, e_n\times e_j)-\langle e_i,e_n\times e_j\rangle\n_j^\perp\sigma\\
      =&\sum_{i,j,n=1}^3 T(e_i,e_n)(\n_j^\perp\sigma\times(e_i\times (e_n\times e_j))-\varphi(e_i,e_n,e_j)\n_j^\perp\sigma)
\end{align*}
Using  relations $e_1\times e_2=e_3$ and $e_i\times(e_n\times e_j)=-\chi(e_i,e_n,e_j)-\langle e_i,e_n\rangle e_j+\langle e_i,e_j\rangle e_n$. The first term of the sum is equal to 
$$
\sum_{i,j=1}^3 T_{ii} e_j\times\n_j^\perp\sigma-T_{ji} e_j\times\n_i^\perp\sigma.
$$
Moreover, since  $\varphi(e_1,e_2,e_3)=1$, the second term becomes
\begin{equation}
-2\sum_{(i,j,k)\in S_3^0}^3C_{ij}\n_k^\perp\sigma. 
\end{equation}
where $2C_{ij}=T_{ij}-T_{ji}$.
\end{proof}
\end{lemma}

\begin{lemma}   \label{torsion_term}
With the above notation 
\begin{equation}
\sum_{k=4}^7 (\nabla_n\psi)(\eta_k,e_1,e_2,e_3)\eta_k=-\sum_{k=4}^7T_{nk}\eta_k.     
\end{equation}
\end{lemma}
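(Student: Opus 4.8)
The claim is an identity between two explicit expressions involving the full torsion tensor $T_{nk}$, and the natural strategy is to expand the left-hand side using Corollary \ref{covariant_psi}, which gives $\nabla_n\psi$ componentwise in terms of $T$ and $\varphi$. Concretely, I would write $(\nabla_n\psi)(\eta_k,e_1,e_2,e_3)$ as a sum of four terms, each of the form $\pm T_{n\,\bullet}\,\varphi_{\bullet\bullet\bullet}$, where in each term one of the slots $\{\eta_k,e_1,e_2,e_3\}$ is "pulled out" into the torsion factor and the remaining three feed into $\varphi$.

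\textbf{Key steps.} First I would apply Corollary \ref{covariant_psi} with the index assignments $r\leftrightarrow \eta_k$, $s\leftrightarrow e_1$, $t\leftrightarrow e_2$, $u\leftrightarrow e_3$, obtaining
$$(\nabla_n\psi)(\eta_k,e_1,e_2,e_3)=-T_{n k}\varphi(e_1,e_2,e_3)+T_{n1}\varphi(\eta_k,e_2,e_3)-T_{n2}\varphi(\eta_k,e_1,e_3)+T_{n3}\varphi(\eta_k,e_1,e_2),$$
where I abbreviate $T_{nk}:=T(\eta_n,\eta_k)$ and $T_{ni}:=T(\eta_n,e_i)$. The next step is to observe that since $Y$ is associative, $\varphi|_Y=\vol(Y)$, so $\varphi(e_1,e_2,e_3)=1$; this disposes of the first term, leaving $-T_{nk}$ plus three correction terms. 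Then I would argue that each of the three remaining terms vanishes: the form $\varphi$ evaluated on one normal vector $\eta_k$ and two tangent vectors $e_i,e_j$ is $g(\eta_k, e_i\times e_j)$, and since $e_i\times e_j=\pm e_l\in TY$ by the cross-product structure on the associative submanifold (the tangent bundle is closed under $\times$, as noted in the discussion preceding \eqref{diraccomposition}), this pairs a normal vector with a tangent vector and hence is zero. Summing over $k,n$ from $4$ to $7$ then yields exactly $-\sum_{k,n=4}^7 T_{nk}\eta_k$, as claimed.

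\textbf{Main obstacle.} The only genuine subtlety is bookkeeping with conventions: one must be careful that the index in the sum on the right-hand side, $T_{nk}$, matches the coefficient produced by Corollary \ref{covariant_psi} with the correct sign, and that the identification of "two tangent, one normal" evaluations of $\varphi$ with pairings of the form $g(\eta_k, e_i\times e_j)$ uses the sign convention \eqref{usual_G2-structure} consistently. I would also double-check that the three cross-terms $\varphi(\eta_k,e_i,e_j)$ really do vanish rather than contributing to a different component — this follows from $\varphi_p$ having no $e^i\wedge e^j\wedge \eta^k$ terms in the normal form \eqref{varphi_2}, which is the local expression underlying Lemma \ref{lemma: TY=Lambda+}. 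Beyond this, the proof is a short direct computation with no analytic content.
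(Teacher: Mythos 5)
Your proof is correct and follows exactly the paper's own (one-line) argument: apply Corollary \ref{covariant_psi} to $\nabla_n\psi_{k123}$ and use associativity of $Y$, i.e.\ $\varphi(e_1,e_2,e_3)=1$ together with the vanishing of the mixed components $\varphi(\eta_k,e_i,e_j)$ visible in \eqref{varphi_2}. The extra bookkeeping you supply (explicitly killing the three cross-terms) is exactly what the paper leaves implicit, and it checks out.
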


\begin{proof}
Since $Y$ is associative, Corollary \ref{covariant_psi} gives $\nabla_n\psi_{k123}=-T_{nk}$.
\end{proof}

Denote the following two operators on $NY$, involving the full torsion tensor
$$
P_2(\sigma)=\sum_{i=1}^3\sum_{l=4}^7((\nabla_iT)(\sigma,\eta_l)+T(\n_i^\perp\sigma, \eta_l))e_i\times \eta_l,
$$
$$
P_3(\sigma)=\sum_{k,l=4}^7\biggl(T(\sigma,\eta_l)+\sum_{i=1}^3\varphi(e_i,\n_i^\perp\sigma,\eta_l)\biggr)T_{lk}\eta_k.
$$
With this notation, we arrive at one of our main theorems:
\begin{theorem}
The Weitzenböck formula for \eqref{fueter_dirac_operator} is
\begin{align}\label{fueter_dirac_formula}
\fueter^2(\sigma)&=\nabla^\ast\nabla\sigma+\frac{1}{4}k\cdot\sigma+\overline{\rho}(F^-)\sigma+P_1(\sigma)+P_2(\sigma)+P_3(\sigma)
\end{align}

\begin{proof}
We examine the five components
of $\fueter^2$ as on page \pageref{eq: D_A^2}. Components (I) and (II) have been studied in Lemmata \ref{lem: calculation of (I)} and \ref{lem: calculation of (II)}. Now, applying Lemma \ref{torsion_term}, we have 
$$\mathrm{(III)}=\sum_{i=1}^3\sum_{k,l=4}^7\varphi(e_i,\n_i^\perp\sigma,\eta_l)T_{lk}\eta_k.$$

As to (IV), for each $i=1,2,3$ and $l=4,5,6,7$, we use Lemma \ref{torsion_term} to find
\begin{align*}
e_i((\nabla_\sigma\psi)(\eta_l,e_1,e_2,e_3))=-e_i(T(\sigma,\eta_l))=-(\nabla_iT)(\sigma,\eta_l)-T(\n_i^\perp\sigma, \eta_l).
\end{align*}
Then, indeed,
$$\mathrm{(IV)}=\sum_{i=1}^3\sum_{l=4}^7((\nabla_iT)(\sigma,\eta_l)+T(\n_i^\perp\sigma, \eta_l))e_i\times \eta_l=P_2(\sigma).$$

Finally, a simple calculation gives
$\displaystyle
\mathrm{(V)}=\sum_{k,l=4}^7T(\sigma,\eta_l)T_{lk}\eta_k$,
and
\begin{equation*}
\mathrm{(V)}+\mathrm{(III)}=\sum_{k,l=4}^7\biggl(T(\sigma,\eta_l)+\sum_{i=1}^3\varphi(e_i,\n_i^\perp\sigma,\eta_l)\biggr)T_{lk}\eta_k=P_3(\sigma)
\qedhere
\end{equation*}
\end{proof}
\end{theorem}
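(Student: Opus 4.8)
The plan is to assemble the Weitzenböck formula \eqref{fueter_dirac_formula} by summing the five terms $(I),\dots,(V)$ into which $\fueter^2\sigma$ was already decomposed on page \pageref{eq: D_A^2}, and then collecting the first-order and zeroth-order contributions according to the groupings $P_1,P_2,P_3$ defined just above the statement. All the substantive analytic work has been front-loaded into the supporting lemmata, so the proof here is essentially a bookkeeping exercise: I would cite Lemma \ref{lem: calculation of (I)} for $(I)=\nabla^\ast\nabla\sigma+\tfrac14 k\sigma+\overline{\rho}(F^-)\sigma$, Lemma \ref{lem: calculation of (II)} for $(II)=P_1(\sigma)$, and then handle $(III),(IV),(V)$ in turn.

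For $(III)$, the first step is to recognise that the inner factor $(\nabla_{\eta_n}\psi)(\eta_k,e_1,e_2,e_3)$ appearing in its definition is exactly the quantity computed in Lemma \ref{torsion_term}, namely $\sum_{k,n}(\nabla_n\psi)(\eta_k,e_1,e_2,e_3)\eta_k=-\sum_{k,n}T_{nk}\eta_k$; substituting this and writing $\varphi(e_j,\n_j^\perp\sigma,\eta_n)$ for the other factor yields $(III)=\sum_{i=1}^3\sum_{k,l=4}^7\varphi(e_i,\n_i^\perp\sigma,\eta_l)T_{lk}\eta_k$. For $(IV)$ the key move is to expand $(\nabla_i\nabla_\sigma\psi)(\eta_l,e_1,e_2,e_3)$ by the Leibniz rule, using the geodesic frame property \eqref{geodesic_frame} to kill every term in which the derivative falls on one of the frame vectors, leaving only $e_i\cdot(\nabla_\sigma\psi)(\eta_l,e_1,e_2,e_3)$; by Lemma \ref{torsion_term} (or directly Corollary \ref{covariant_psi}, which gives $\nabla_\sigma\psi(\eta_l,e_1,e_2,e_3)=-T(\sigma,\eta_l)$) this is $-e_i(T(\sigma,\eta_l))=-(\nabla_iT)(\sigma,\eta_l)-T(\n_i^\perp\sigma,\eta_l)$, whence $(IV)=P_2(\sigma)$ by the very definition of $P_2$. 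For $(V)$ one again applies Lemma \ref{torsion_term} to the factor $(\nabla_{\eta_l}\psi)(\eta_k,e_1,e_2,e_3)$ and Corollary \ref{covariant_psi} to $(\nabla_\sigma\psi)(\eta_l,e_1,e_2,e_3)=-T(\sigma,\eta_l)$, so that $(V)=\sum_{k,l=4}^7 T(\sigma,\eta_l)T_{lk}\eta_k$.

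The final step is to add everything up: $\fueter^2\sigma=(I)+(II)-(III)-(IV)+(V)$ with the signs as in the display on page \pageref{eq: D_A^2}. The terms $(I)$ and $(II)$ contribute $\nabla^\ast\nabla\sigma+\tfrac14 k\sigma+\overline{\rho}(F^-)\sigma+P_1(\sigma)$; the term $-(IV)$ contributes exactly $P_2(\sigma)$ after absorbing the sign into the definition; and the remaining pieces $(V)-(III)$ combine — since $(V)=\sum_{k,l}T(\sigma,\eta_l)T_{lk}\eta_k$ and $(III)=\sum_i\sum_{k,l}\varphi(e_i,\n_i^\perp\sigma,\eta_l)T_{lk}\eta_k$ — into $\sum_{k,l=4}^7\bigl(T(\sigma,\eta_l)+\sum_{i=1}^3\varphi(e_i,\n_i^\perp\sigma,\eta_l)\bigr)T_{lk}\eta_k=P_3(\sigma)$, precisely as $P_3$ was defined. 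This gives \eqref{fueter_dirac_formula}.

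The only place requiring genuine care — rather than pure transcription — is the sign and index discipline in $(IV)$ and in the $(III)+(V)$ combination: one must be consistent about the convention $\nabla_\sigma\psi(\eta_l,e_1,e_2,e_3)=-T(\sigma,\eta_l)$ versus $+T$, about which slot of $\psi$ carries the normal vector, and about the role of the geodesic-frame vanishing \eqref{geodesic_frame} when commuting $\nabla_i$ past $\nabla_\sigma$ (note $\sigma$ is not a frame vector, so its derivative is retained, producing the $T(\n_i^\perp\sigma,\eta_l)$ term that is essential to matching $P_2$). I expect no conceptual obstacle; the verification is routine once the earlier lemmata are in hand.
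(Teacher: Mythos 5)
Your proposal is correct and follows essentially the same route as the paper: it cites the same lemmata for $(I)$ and $(II)$, evaluates $(III)$, $(IV)$, $(V)$ via Lemma \ref{torsion_term} and Corollary \ref{covariant_psi} in the geodesic frame, and regroups the results into $P_1+P_2+P_3$. The only caveat is purely notational: whether the labels $(III)$, $(V)$ denote the underbraced expressions with or without their leading signs --- you (like the paper itself) slip between the two conventions when writing $(V)-(III)$ while assigning $(III)$ its already-signed value $\sum\varphi(e_i,\nabla_i^\perp\sigma,\eta_l)T_{lk}\eta_k$ --- but the underlying cancellation of the two minus signs is tracked correctly and the conclusion stands.
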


\begin{corollary}
Let $(M^7,\varphi)$ be a $\gt$-manifold. Then,
$$
\fueter^2=\dirac^2=\nabla^\ast \nabla +\frac{1}{4}k+\overline{\rho}(F^-)
$$
\end{corollary}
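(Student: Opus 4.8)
The plan is to deduce the corollary directly from the main Weitzenböck formula \eqref{fueter_dirac_formula} by observing that the torsion-free hypothesis $\nabla\varphi=0$ kills all three first-order correction terms $P_1,P_2,P_3$ at once. Recall from Proposition \ref{prop: full torsion tensor} and the surrounding discussion that the full torsion tensor $T_{lm}$ is built entirely from the four torsion forms $\tau_0,\tau_1,\tau_2,\tau_3$, which by the Fern\'andez--Gray theorem all vanish precisely when $(M,\varphi)$ is a $G_2$-manifold. Hence $T\equiv 0$ on $M$, and in particular $T_{ij}=0$ and $(\nabla_i T)(\sigma,\eta_l)=0$ along $Y$.

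With $T\equiv 0$, I would substitute into the explicit formulas for $P_1,P_2,P_3$ recorded just before the theorem. For $P_1$ this is immediate: every summand in
$$
P_1(\sigma)=\sum_{i,j=1}^3 T_{ii}\, e_j\times\n_j^\perp\sigma - T_{ji}\, e_j\times\n_i^\perp\sigma - 2\sum_{(i,j,k)\in S_3^0}C_{ij}\,\n_k^\perp\sigma
$$
carries a factor of $T_{ij}$ (note $C_{ij}$ is the antisymmetric part of $T_{ij}$), so $P_1\equiv 0$. Likewise each term of $P_2(\sigma)=\sum_{i,l}\bigl((\nabla_iT)(\sigma,\eta_l)+T(\n_i^\perp\sigma,\eta_l)\bigr)e_i\times\eta_l$ and of $P_3(\sigma)=\sum_{k,l}\bigl(T(\sigma,\eta_l)+\sum_i\varphi(e_i,\n_i^\perp\sigma,\eta_l)\bigr)T_{lk}\eta_k$ contains a factor $T$ or $\nabla T$, so $P_2\equiv P_3\equiv 0$. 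Substituting these vanishings into \eqref{fueter_dirac_formula} leaves exactly $\fueter^2(\sigma)=\nabla^\ast\nabla\sigma+\tfrac14 k\cdot\sigma+\overline{\rho}(F^-)\sigma$.

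It remains to note that $\fueter=\dirac$ in the torsion-free case, which is precisely the content of the Corollary following \eqref{D_with_torsion}: there $a=A-A_0=0$ because the obstruction to $a$ vanishing is the term $\sum_k(\bar\nabla_\sigma\psi)(\eta_k,e_1,e_2,e_3)\eta_k$, and $\nabla\psi=0$ whenever $\nabla\varphi=0$. Therefore $\fueter=\dirac_{A_0}=\dirac$, and both squared operators coincide with the displayed right-hand side. Since every ingredient is an immediate specialisation of results already proved in the excerpt, there is no real obstacle here; the only point requiring a word of care is the bookkeeping that each correction operator $P_i$ is genuinely proportional to the torsion (as opposed to, say, involving covariant derivatives of the frame that survive torsion-freeness), but this is transparent from the closed-form expressions for $P_1,P_2,P_3$ stated above.
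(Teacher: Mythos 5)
Your proof is correct and is exactly the argument the paper intends (the corollary is stated without proof as an immediate specialisation of the Weitzenböck theorem): torsion-freeness forces $T\equiv 0$, each of $P_1,P_2,P_3$ carries a factor of $T$ or $\nabla T$ and hence vanishes, and $\fueter=\dirac$ because $a=A-A_0=0$. Nothing is missing.
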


%\begin{corollary}\label{resultrigid}
%Let $(M,\varphi)$ be a $\gt$-manifold. Given a compact, associative submanifold $Y$, such that the operator $\overline{\rho}(F^-)$ associated to the curvature of the bundle $S^-$ is lower bounded by $-k/4$. Then $Y$ is rigid.
%\end{corollary}

%\begin{proof}
%Let $\sigma$ be a section of $NY$, 
%\begin{align*}
%\Delta|\sigma|^2 &=\sum_i e_ie_i\langle \sigma, \sigma\rangle= 2\sum_i e_i\langle \n_i^\perp \sigma, \sigma\rangle\\
%            &=2\sum_i \langle \n_i^\perp \n_i^\perp \sigma, \sigma\rangle +\langle \n_i^\perp \sigma,\n_i^\perp \sigma\rangle\\
 %           &=-2\langle \nabla^\ast \nabla \sigma,\sigma\rangle + |\nperp \sigma|^2\\
  %          &=-2\langle \Dslash_{A_0}^2(\sigma),\sigma\rangle+\frac{k}{2}\langle \sigma,\sigma\rangle+\langle\overline{\rho}(F^-)\sigma,\sigma\rangle+\langle \n_i^\perp \sigma,\n_i^\perp \sigma\rangle 
%\end{align*}
%Then taking $\sigma\in \ker \Dslash_{A_0}$ by the Stokes' theorem we have
%\begin{align*}
%0=\int_Y \frac{k}{2}||\sigma||^2+\langle \overline{\rho}(F^-)\sigma,\sigma\rangle+|\nperp \sigma|^2
%\end{align*}
%by the hypothesis $\langle \bar{\rho}(F^-)\sigma,\sigma\rangle\geq-\frac{k}{2}\langle \sigma,\sigma\rangle$ for each section $\sigma$ of $NY$, then $\sigma=0$, so the moduli space of deformation for $Y$ vanish, then $Y$ is rigid as an associative submanifold.
%\end{proof}

In \cite{gayet2014smooth}, Gayet obtains a Weitzenböck-type formula when the $\gt$-structure is torsion-free:
\begin{equation}\label{Gayet_formula}
\dirac^2=\nabla^\ast \nabla+\mathcal{R}-\mathcal{A}.
\end{equation}
The term $\mathcal{R}(\sigma)=\pi^\perp\sum_{i=1}^3 R(e_i,\sigma)e_i$ can be seen as a partial Ricci operator, where $R$ is the curvature tensor of $g$ on $M$ and $\pi^\perp$ is the orthogonal projection to $NY$, and  
$$
\mathcal{A}: \Omega^0(NY)\rightarrow \Omega^0(\Sym(TY)),
$$
defined by $\mathcal{A}(\sigma)=S^t\circ S(\sigma)$, is a symmetric positive \nth{0}--order operator determined by the shape operator $S(\sigma)(X)=-(\nabla_X\sigma)^\top$. With these data, Gayet formulates a vanishing theorem for a compact associative submanifold $Y$ of a $\gt$--manifold and proves that $Y$ is rigid when the spectrum of the operator $\mathcal{R}-\mathcal{A}$ is positive. The advantage of  formula \eqref{Gayet_formula} lies in the relation between the intrinsic and extrinsic geometries of the associative submanifold, because $\mathcal{R}-\mathcal{A}$ is obtained from a curvature term 
\begin{equation}        \label{eq: curvature term}
        -\sum_{i<j}^3(e_i\times e_j)\times R^\perp(e_i,e_j)\sigma.
\end{equation}
While one cannot entirely apply his proof to the general case (because the full torsion tensor is nonzero), we are able to adapt some of its steps.

Given  $u,v\in \Omega^0(TY)$, and their respective local extensions $\bar{u},\bar{v}$ to $TM$, the $TY$-valued $2$--form   $B(u,v):=\nabla_{\bar{u}}\bar{v}-\nabla^\top_u v$ relates to the shape operator by $\langle B(u,v),\sigma\rangle=\langle S_\sigma(u),v\rangle$, for $\sigma\in\Omega^0(TM)$. These data define a natural $\nth{0}$--order operator
\begin{equation}\label{operator_B}
\mathcal{B}(\sigma):=\sum_{i,j=1}^3(e_i\times e_j)\times B(e_j,S_{\sigma}(e_i)).
\end{equation}

\begin{prop}\label{alternative_expression}
The curvature term in (\ref{eq: curvature term}) can be rewritten as
\begin{equation}
      -\sum_{i<j}^3(e_i\times e_j)\times R^\perp(e_i,e_j)\sigma
      =
      \mathcal{R}(\sigma)+\mathcal{B}(\sigma)-\pi^\perp\biggl(\sum_{i\in\mathbb{Z}_3}e_i\times\mathcal{T}(e_{i+1},\sigma,e_i,e_{i+1})\biggr),
\end{equation}
where $\mathcal{R}(\sigma)=\pi^\perp\sum_{i=1}^3 R(e_i,\sigma)e_i$ is the partial Ricci operator defined in \eqref{Gayet_formula} and $\mathcal{T}$ is defined in \eqref{torsion_four_tensor} by
\begin{align*}
            \mathcal{T}(e_{i+1},\sigma,e_i,e_{i+1})
            &:=\sum_{m=1}^7T(\sigma,e_m)(\nabla_{i+1}\psi)(e_m,e_i,e_{i+1},\cdot)^\sharp-T_{i+1 m}             (\nabla_\sigma\psi)(e_m,e_i,e_{i+1},\cdot)^\sharp\\
            &\qquad+\Bigl((\nabla_{i+1}T)(\sigma,e_m)-(\nabla_\sigma T)(e_{i+1},e_m)\Bigr)\chi(e_m,e_i,e_{i+1}).
\end{align*}
\end{prop}

\begin{proof} Expanding the summands in the frame $\{\eta_4,\dots,\eta_7\}$ and using
anti-symmetry of the mixed product and the Ricci equation, we have
\begin{align*}
        -\sum_{i<j}^3
        (e_i\times e_j)\times R^\perp(e_i,e_j)\sigma 
        &=
        -\frac{1}{2}\sum_{i,j=1}^3\sum_{k=4}^7
        \langle (e_i\times e_j)\times R^\perp(e_i,e_j)\sigma, \eta_k\rangle \eta_k\\
        &=\frac{1}{2}\sum_{i,j=1}^3\sum_{k=4}^7
        \langle R^\perp(e_i,e_j)\sigma,(e_i\times e_j)\times  \eta_k\rangle \eta_k\\
        &=
        \frac{1}{2}\sum_{i,j=1}^3\sum_{k=4}^7\langle R(e_i,e_j)\sigma,(e_i\times e_j)\times \eta_k\rangle \eta_k+\langle [S_\sigma, S_{(e_i\times e_j)\times \eta_k}]e_i,e_j\rangle \eta_k\\
        &=
        \underbrace{
                -\frac{1}{2}\pi^\perp{\sum_{i,j=1}^3} 
                (e_i\times e_j)\times R(e_i,e_j)\sigma
                }_{(\star)} 
        +
        \underbrace{
                \frac{1}{2}\sum_{i,j=1}^3\sum_{k=4}^7
                \langle [S_\sigma, S_{(e_i\times e_j)\times \eta_k}]e_i,e_j\rangle
                 \eta_k
                 }_{(\star\star)}.
\end{align*}
Applying the Bianchi identity $R(e_i,e_j)\sigma=-R(\sigma, e_i)e_j-R(e_j,\sigma)e_i$
to the first term, expanding the sum and using  Lemma \ref{Leibniz_rule}, we have:
\begin{align*}
        (\star)
        &=
        \pi^\perp\sum_{i,j=1}^3(e_i\times e_j)\times R(e_j,\sigma)e_i\\
        &=
        \pi^\perp (e_3\times R(e_2,\sigma)e_1-e_2\times R(e_3,\sigma)e_1-e_3\times R(e_1,\sigma)e_2+e_1\times R(e_3,\sigma)e_2\\
        &\quad + e_2\times R(e_1,\sigma)e_3-e_1\times R(e_2,\sigma)e_3)\\
        &=\pi^\perp
        (
        \underbrace{-e_1\times [R(e_2,\sigma)e_1\times e_2+e_1\times R(e_2,\sigma)e_2}_{\mathrm{(I)}}+
        \mathcal{T}  
        (e_2,\sigma,e_1,e_2)]\\
        &\quad\underbrace{-e_2\times [R(e_3,\sigma)e_2\times e_3+e_2\times R(e_3,\sigma)e_3}_{\mathrm{(II)}}+
        \mathcal{T}
        (e_3,\sigma,e_2,e_3)]\\
        &\quad-\underbrace{e_3\times [R(e_1,\sigma)e_3\times e_1+e_3\times R(e_1,\sigma)e_1}_{\mathrm{(III)}}+
        \mathcal{T}
        (e_1,\sigma,e_3,e_1)]\\
        &\quad+e_3\times R(e_2,\sigma)e_1+e_1\times R(e_3,\sigma)e_2+e_2\times R(e_1,\sigma)e_3
        ).
\end{align*}
Using the identity $u\times(v\times w)+v\times(u\times w)=\langle u,w\rangle v+\langle v,w\rangle u-2\langle u,v\rangle w$, we check that 
\begin{align*}
\mathrm{(I)}&=-e_3\times R(e_2,\sigma)e_1-(e_2,\sigma,e_1,e_2)e_1+2(e_2,\sigma,e_1,e_1)e_2+R(e_2,\sigma)e_2\\
\mathrm{(II)}&=-e_1\times R(e_3,\sigma)e_2-(e_3,\sigma,e_2,e_3)e_2+2(e_3,\sigma,e_2,e_2)e_3+R(e_3,\sigma)e_3\\
\mathrm{(III)}&=-e_2\times R(e_1,\sigma)e_3-(e_1,\sigma,e_3,e_1)e_3+2(e_1,\sigma,e_3,e_3)e_1+R(e_1,\sigma)e_1,
\end{align*}
where $(e_1,\sigma,e_3,e_1):=\langle R(e_1,\sigma)e_3,e_1\rangle$. Cancelling
terms and taking the orthogonal projection on $\mathrm{(I)+(II)+(III)}$, we find $(\star)=\mathcal{R}(\sigma)-\pi^\perp\Bigl(\sum e_i\times\mathcal{T}(e_{i+1},\sigma,e_i,e_{i+1})\Bigr)$.

Finally, by the symmetry of $S_\sigma$ and $S_{(e_i\times e_j)\times \eta_k}$, the second term is
\begin{align*}
        (\star\star)
        &=\frac{1}{2}\sum_{i,j=1}^3\sum_{k=4}^7
        \biggl(\langle S_{(e_i\times e_j)\times \eta_k}(e_i),S_\sigma(e_j)\rangle
        -
        \langle S_\sigma(e_i),S_{(e_i\times e_j)\times \eta_k} (e_j)\rangle\biggr)\eta_k\\
        &=\sum_{i,j=1}^3\sum_{k=4}^7
        \biggl(\langle S_{(e_i\times e_j)\times \eta_k}(e_i),S_\sigma(e_j)\rangle\biggr)\eta_k\\
        &=\sum_{i,j=1}^3\sum_{k=4}^7
        \biggl(\langle B(e_i,S_\sigma(e_j)),(e_i\times e_j)\times \eta_k\rangle\biggr)\eta_k\\
        &=\sum_{i,j=1}^3\sum_{k=4}^7
        \varphi(e_i\times e_j ,\eta_k, B(e_i,S_\sigma(e_j)))\eta_k\\
        &=-\sum_{i,j=1}^3(e_i\times e_j)\times B(e_i,S_\sigma(e_j))\\
        &=\mathcal{B}(\sigma). \qedhere
\end{align*}
\end{proof}

%%%%%%%%%%%%%%%%%%%%%%%%%%%%%%%%%%%%%%%%%%%%%%%%%%%%%%%%%%%
\section{The nearly parallel case and applications}
\label{sec: nearly parallel case}

The torsion-free condition for a   $\gt$-structure is highly overdetermined, so examples are difficult to construct and seldom known explicitly. In terms of the Fern\'andez-Gray classification recalled in Section \ref{sec: torsion tensor}, the next natural  `least-torsion' case consists of the so-called nearly parallel structures, for which the torsion forms $\tau_1,\tau_2,\tau_3$  vanish and the remaining torsion is just a constant:
\begin{definition}
Let $(M,\varphi)$ a manifold with a $\gt$--structure, $\varphi$ is called \emph{nearly parallel} if 
$$d\varphi=\tau_0\psi,$$with $\tau_0\neq 0$ constant. 
\end{definition}

Regarding the deformations of associative submanifolds, our approach unifies previously known results by means of a Bochner-type vanishing theorem. This technique requires a certain `positivity' of curvature, which can in practice be found in cases of interest studied by several authors. 

\subsection{Proof of the vanishing theorem}
Following Proposition \ref{prop: full torsion tensor}, the full torsion tensor in the nearly parallel case is given by $T_{ij}=\frac{\tau_0}{4}g_{ij}$,
which drastically simplifies the Weitzenb\"ock formula \eqref{fueter_dirac_formula}:

\begin{prop}
The Weitzenböck formula for the Fueter-Dirac operator \eqref{fueter_dirac_operator} in the nearly parallel case is
\begin{equation}\label{nearly_parallel_case}
\fueter^2(\sigma)=\nabla^\ast\nabla\sigma+\frac{1}{4}k\cdot \sigma+\rho(F^-)\sigma+\tau_0\dirac(\sigma)+\frac{\tau_0^2}{16}\cdot\sigma.
\end{equation}
\end{prop}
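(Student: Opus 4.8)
The plan is to specialise the general Weitzenböck formula \eqref{fueter_dirac_formula} by inserting the nearly parallel torsion identity. By Proposition \ref{prop: full torsion tensor}, once $\tau_1 = \tau_2 = \tau_3 = 0$ the full torsion tensor reduces to $T_{lm} = \tfrac{\tau_0}{4}g_{lm}$ on all of $TM|_Y$; moreover, since $\tau_0$ is constant and $g$ is parallel, every covariant derivative $\nabla_i T$ vanishes and the antisymmetric part $C_{ij}$ of $T_{ij}$ is zero. The curvature terms $\nabla^\ast\nabla\sigma + \tfrac14 k\sigma + \overline{\rho}(F^-)\sigma$ are carried over unchanged from \eqref{fueter_dirac_formula}, so the task is to evaluate the three first-order correction operators $P_1$, $P_2$, $P_3$ under these simplifications and check that they recombine into $\tau_0\dirac\sigma + \tfrac{\tau_0^2}{16}\sigma$.

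Carrying this out: in $P_1$ one has $T_{ii} = \tfrac{\tau_0}{4}$ for the three tangential indices and $C_{ij} = 0$, so the defining expression collapses to $P_1(\sigma) = \tfrac{3\tau_0}{4}\dirac\sigma - \tfrac{\tau_0}{4}\dirac\sigma = \tfrac{\tau_0}{2}\dirac\sigma$, where throughout $\dirac\sigma = \sum_{j=1}^3 e_j\times\n_j^\perp\sigma$. In $P_2$ the term $(\nabla_iT)(\sigma,\eta_l)$ drops out, and resumming the normal frame gives $P_2(\sigma) = \tfrac{\tau_0}{4}\sum_{i=1}^3\sum_{l=4}^7\langle\n_i^\perp\sigma,\eta_l\rangle\, e_i\times\eta_l = \tfrac{\tau_0}{4}\dirac\sigma$. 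In $P_3$ one substitutes $T(\sigma,\eta_l) = \tfrac{\tau_0}{4}\langle\sigma,\eta_l\rangle$ and $T_{lk} = \tfrac{\tau_0}{4}\delta_{lk}$, and uses $\sum_{l=4}^7\varphi(e_i,\n_i^\perp\sigma,\eta_l)\eta_l = e_i\times\n_i^\perp\sigma$ (which holds because the cross product of a tangent vector with a normal vector is normal), obtaining $P_3(\sigma) = \tfrac{\tau_0^2}{16}\sigma + \tfrac{\tau_0}{4}\dirac\sigma$. Summing the three gives $P_1(\sigma) + P_2(\sigma) + P_3(\sigma) = \tau_0\dirac\sigma + \tfrac{\tau_0^2}{16}\sigma$, and substituting into \eqref{fueter_dirac_formula} yields \eqref{nearly_parallel_case}.

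No step presents a real obstacle — this is bookkeeping with cross-product and frame identities — but one must keep careful track of the several $\tfrac{\tau_0}{4}\dirac\sigma$ summands: the factor $\sum_i T_{ii}$ over the tangential indices produces a coefficient $3$ in $P_1$, whereas the $\sum_{i,j}T_{ji}\,e_j\times\n_i^\perp\sigma$ term contributes only a single copy of $\dirac\sigma$, and, crucially, the $\varphi(e_i,\n_i^\perp\sigma,\eta_l)$ factor in $P_3$ reassembles into yet another copy of $\dirac\sigma$ rather than a new term. It is precisely the balance $\tfrac{\tau_0}{2} + \tfrac{\tau_0}{4} + \tfrac{\tau_0}{4} = \tau_0$ that yields the stated coefficient in front of $\dirac$. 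As an independent check, one may observe that in the nearly parallel case \eqref{D_with_torsion} together with Corollary \ref{covariant_psi} gives $\fueter = \dirac + \tfrac{\tau_0}{4}\,\ident$, so that $\fueter^2 = \dirac^2 + \tfrac{\tau_0}{2}\dirac + \tfrac{\tau_0^2}{16}$; combined with the general identity $\dirac^2 = \nabla^\ast\nabla + \tfrac14 k + \overline{\rho}(F^-) + P_1$ coming from Lemmas \ref{lem: calculation of (I)} and \ref{lem: calculation of (II)}, this reproduces \eqref{nearly_parallel_case}.
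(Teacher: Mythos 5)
Your proposal is correct and follows essentially the same route as the paper: substitute $T_{lm}=\tfrac{\tau_0}{4}g_{lm}$ (with $C_{ij}=0$ and $\nabla T=0$) into $P_1,P_2,P_3$ and verify that they sum to $\tau_0\dirac\sigma+\tfrac{\tau_0^2}{16}\sigma$, with the same coefficient bookkeeping $\tfrac{\tau_0}{2}+\tfrac{\tau_0}{4}+\tfrac{\tau_0}{4}=\tau_0$. Your closing cross-check via $\fueter=\dirac+\tfrac{\tau_0}{4}\ident$ and $\dirac^2=\nabla^\ast\nabla+\tfrac{1}{4}k+\overline{\rho}(F^-)+P_1$ is a nice consistency test not present in the paper, but the core argument is the same.
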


\begin{proof}
Given the orthonormal frame $\{e_1,e_2,e_3,\eta_4,...,\eta_7\}$, it suffices to prove that the last three terms in \eqref{fueter_dirac_formula}  satisfy $$
(P_1+P_2+P_3)(\sigma)
=
\tau_0\dirac(\sigma)+\frac{\tau_0^2}{16}\cdot\sigma.
$$
At a point $p\in Y$, for $P_1$, we have $C_{ij}=0$, because $\tau_1$ and $\tau_2$ are zero, then
\begin{align*}
\sum_{i,j=1}^3 T_{ii} e_j\times\n_j^\perp\sigma-T_{ji} e_j\times\n_i^\perp\sigma &=\frac{3}{4}\tau_0\sum_{j=1}^3e_j\times\n_j^\perp\sigma-\frac{1}{4}\tau_0\sum_{j=1}^3e_j\times\n_j^\perp\sigma\\
&=\frac{1}{2}\tau_0\dirac(\sigma).
\end{align*}
For $P_2$,
\begin{align*}
\sum_{i=1}^3\sum_{l=4}^7((\nabla_iT)(\sigma,\eta_l)+T(\n_i^\perp\sigma, \eta_l))e_i\times \eta_l&=\frac{\tau_0}{4}\sum_{i=1}^3\sum_{l=4}^7g(\n_i^\perp\sigma,\eta_l)e_i\times \eta_l\\
&=\frac{\tau_0}{4}\sum_{i=1}^3e_i\times\n_i^\perp\sigma=\frac{\tau_0}{4}\dirac(\sigma).
\end{align*}
And, for $P_3$,
\begin{align*}
\sum_{k,l=4}^7\biggl(T(\sigma,\eta_l)+\sum_{i=1}^3\varphi(e_i,\n_i^\perp\sigma,\eta_l)\biggr)T_{lk}\eta_k &=\frac{\tau_0}{4}\sum_{k,l=4}^7\biggl(\frac{\tau_0}{4}g(\sigma,\eta_l)+\sum_{i=1}^3\varphi(e_i,\nabla_i\sigma,\eta_l)\biggr)g(\eta_l,\eta_k)\eta_k\\
&=\frac{\tau_0}{4}\sum_{l=4}^7\biggl(\frac{\tau_0}{4}g(\sigma,\eta_l)+\sum_{i=1}^3\varphi(e_i,\nabla_i\sigma,\eta_l)\biggr)\eta_l\\
&=\frac{\tau_0^2}{16}\cdot\sigma +\frac{\tau_0}{4}\dirac(\sigma).
\qedhere
\end{align*}
\end{proof} 

\begin{corollary}\label{alternative_weitzenbock_formula}
Equation \eqref{nearly_parallel_case} can be rewritten as
\begin{equation}\label{nearly-weitzenbock}
        \fueter^2(\sigma)
        =
        \nabla^\ast\nabla\sigma+\mathcal{R}(\sigma)+\mathcal{B}(\sigma)+\tau_0\dirac(\sigma)+\frac{\tau_0^2}{4}\cdot\sigma.
\end{equation}
\begin{proof}
For a nearly parallel $\gt$-structure, the full torsion tensor is $T_{ij}=\frac{\tau_0}{4}g_{ij}$, thus $\nabla T=0$ and so:
\begin{align*}
        \sum_{i\in\mathbb{Z}_3}e_i\times\mathcal{T}(e_{i+1},\sigma,e_i,e_{i+1})=&\frac{\tau_0}{4}\sum_{i\in\mathbb{Z}_3}\sum_{m,l=1}^7\Big(g(\sigma,e_m)(\nabla_{i+1}\psi)(e_m,e_i,e_{i+1},e_l)\\
        &-g(e_{i+1},e_m)(\nabla_\sigma\psi)(e_m,e_i,e_{i+1},e_l)\Big)e_i\times e_l\\
        =&\frac{\tau_0}{4}\sum_{i\in\mathbb{Z}_3}\sum_{l=1}^7\Big((\nabla_{i+1}\psi)(\sigma,e_i,e_{i+1},e_l)\\
        &-(\nabla_\sigma\psi)(e_{i+1},e_i,e_{i+1},e_l)\Big)e_i\times e_l\\
         =&\frac{\tau_0}{4}\sum_{i\in\mathbb{Z}_3}\sum_{l=1}^7\Big((\nabla_{i+1}\psi)(\sigma,e_i,e_{i+1},e_l)\Big)e_i\times e_l\\
         =&-\frac{\tau_0}{4}\sum_{i\in\mathbb{Z}_3}\sum_{l=1}^7T(e_{i+1},e_{i+1})\varphi(\sigma,e_i,e_l)e_i\times e_l\\
         =&-\frac{\tau_0^2}{16}\sum_{i\in\mathbb{Z}_3}g(e_{i+1},e_{i+1})e_i\times(\sigma\times e_i)\\
        =&-\frac{3}{16}\tau_0^2\sigma
\end{align*}
Here we used the skew-symmetry of $\n_\sigma\psi$ for the third equality and Corollary \ref{covariant_psi} for the fourth one. Equation \eqref{nearly-weitzenbock} now follows from Proposition \ref{alternative_expression}.
\end{proof}
\end{corollary}

\begin{theorem}\label{rigidity_theorem}
Let $(M,\varphi)$ be a $7$-manifold with a nearly parallel $\gt$--structure. If  $Y\subset M$ is a closed associative submanifold such that the operator $\bar{\rho}(F^-)$ associated to the curvature of the bundle $S^-$ (c.f. (\ref{eq: rho barra}) and Lemma \ref{lem: calculation of (I)}) is bounded below by $-(\frac{k}{4}-\frac{3}{16}\tau_0^2)$, then $Y$ is rigid.  
\begin{proof}
Let $\sigma$ be a section of $NY$, 
\begin{align*}
\Delta|\sigma|^2 &=\sum_i e_ie_i\langle \sigma, \sigma\rangle= 2\sum_i e_i\langle \n_i^\perp \sigma, \sigma\rangle\\
             &=2\sum_i \langle \n_i^\perp \n_i^\perp \sigma, \sigma\rangle +\langle \n_i^\perp \sigma,\n_i^\perp \sigma\rangle\\
             &=-2\langle \nabla^\ast \nabla \sigma,\sigma\rangle + 2|\nperp \sigma|^2\\
             &=-2\langle \fueter^2(\sigma),\sigma\rangle+\frac{k}{2}\langle \sigma,\sigma\rangle+2\langle\overline{\rho}(F^-)\sigma,\sigma\rangle+2\tau_0\langle\dirac(\sigma),\sigma\rangle +\frac{\tau_0^2}{8}|\sigma|^2+2|\nperp \sigma|^2.
\end{align*}
Taking $\sigma\in \ker \fueter$, equation \eqref{D_with_torsion} gives 
 $$\langle \dirac(\sigma),\sigma\rangle =\sum_{k=4}^7(\nabla_{\sigma}\psi)(\eta_k,e_1,e_2,e_3)\langle \eta_k,\sigma\rangle=-\sum_{k=4}^7T(\sigma ,\eta_k)\langle \eta_k,\sigma\rangle=-\frac{\tau_0}{4}\sum_{k=4}^7\langle\sigma ,\eta_k\rangle^2.$$
By Stokes' theorem, it follows
that\begin{align*}
0=&\int_Y \biggl(\frac{k}{4}|\sigma|^2+\langle \overline{\rho}(F^-)\sigma,\sigma\rangle -\frac{\tau_0^2}{4}\sum_{k=4}^7\langle\sigma,\eta_k\rangle^2+\frac{\tau_0^2}{16}|\sigma|^2+|\nperp \sigma|^2\biggr)d\vol_Y\\
 =&\int_Y (\biggl(\frac{k}{4}-\frac{3}{16}\tau_0^2\biggr)|\sigma|^2+\langle \overline{\rho}(F^-)\sigma,\sigma\rangle+|\nperp\sigma|^2)d\vol_Y.
\end{align*}
By assumption, $\langle \bar{\rho}(F^-)\sigma,\sigma\rangle\geq-\biggl(\frac{k}{4}-\frac{3}{16}\tau_0^2\biggr)\langle \sigma,\sigma\rangle$, so $\nperp\sigma=0$ and this implies $\dirac(\sigma)=0$. Notice  from Lemma \ref{torsion_term} that the Fueter-Dirac operator is
$$
\fueter = \dirac +\frac{\tau_0}{4} \qwithq \tau_0\neq 0.
$$
Then, from $\fueter(\sigma)=0$ it follows that $\sigma=0$, i.e. $\ker \fueter=\{0\}$.
\end{proof}
\end{theorem}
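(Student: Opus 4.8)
The plan is to run a Bochner-type vanishing argument off the nearly-parallel Weitzenb\"ock identity \eqref{nearly_parallel_case}. Rigidity will follow once we show $\ker\fueter=\{0\}$, so it suffices to take an arbitrary $\sigma\in\ker\fueter$ and prove it vanishes; since $Y$ is closed we may integrate the pointwise Weitzenb\"ock formula against $\sigma$ and integrate by parts.

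First I would fix a frame $\{e_1,e_2,e_3,\eta_4,\dots,\eta_7\}$ with property \eqref{geodesic_frame} and record the elementary Bochner identity
\begin{equation*}
\tfrac12\Delta|\sigma|^2=-\langle\nabla^\ast\nabla\sigma,\sigma\rangle+|\nperp\sigma|^2 .
\end{equation*}
Substituting $\nabla^\ast\nabla\sigma$ from \eqref{nearly_parallel_case} and using $\fueter^2(\sigma)=0$ turns the right-hand side into
\begin{equation*}
\tfrac{k}{4}|\sigma|^2+\langle\overline{\rho}(F^-)\sigma,\sigma\rangle+\tau_0\langle\dirac(\sigma),\sigma\rangle+\tfrac{\tau_0^2}{16}|\sigma|^2+|\nperp\sigma|^2 ,
\end{equation*}
so everything hinges on evaluating the Dirac cross-term $\langle\dirac(\sigma),\sigma\rangle$.

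This is the one step that needs a genuine input, and I regard it as the main (if mild) obstacle: one must use that $\sigma$ lies in the kernel of the \emph{twisted} operator $\fueter$, not of $\dirac$ itself. By \eqref{D_with_torsion}, $\fueter\sigma=\dirac\sigma-\sum_{k}(\nabla_\sigma\psi)(\eta_k,e_1,e_2,e_3)\eta_k$, so $\fueter\sigma=0$ gives $\dirac\sigma=\sum_k(\nabla_\sigma\psi)(\eta_k,e_1,e_2,e_3)\eta_k$; in the nearly parallel case $T_{ij}=\tfrac{\tau_0}{4}g_{ij}$, and Corollary \ref{covariant_psi} (exactly as in Lemma \ref{torsion_term}) gives $(\nabla_\sigma\psi)(\eta_k,e_1,e_2,e_3)=-T(\sigma,\eta_k)=-\tfrac{\tau_0}{4}\langle\sigma,\eta_k\rangle$. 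Since $\{\eta_4,\dots,\eta_7\}$ is an orthonormal frame of $NY$ and $\sigma\in\Omega^0(NY)$, this yields $\dirac\sigma=-\tfrac{\tau_0}{4}\sigma$, hence $\langle\dirac(\sigma),\sigma\rangle=-\tfrac{\tau_0}{4}|\sigma|^2$. Feeding this back, the two $\tau_0^2$-contributions combine and the coefficient of $|\sigma|^2$ becomes precisely $\tfrac{k}{4}-\tfrac{3}{16}\tau_0^2$, which is what makes the hypothesis on $\overline{\rho}(F^-)$ the natural one.

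Finally I would integrate over $Y$. As $Y$ is closed, $\int_Y\Delta|\sigma|^2\,d\vol_Y=0$ by Stokes' theorem, so
\begin{equation*}
0=\int_Y\Bigl(\bigl(\tfrac{k}{4}-\tfrac{3}{16}\tau_0^2\bigr)|\sigma|^2+\langle\overline{\rho}(F^-)\sigma,\sigma\rangle+|\nperp\sigma|^2\Bigr)\,d\vol_Y .
\end{equation*}
The hypothesis makes the bracket pointwise non-negative, so the integrand vanishes identically; in particular $\nperp\sigma=0$. Combining $\nperp\sigma=0$ with $\fueter\sigma=0$ in \eqref{D_with_torsion} once more forces $\tfrac{\tau_0}{4}\sigma=0$, and since $\tau_0\neq0$ we conclude $\sigma\equiv0$, i.e. $\ker\fueter=\{0\}$. (Equivalently, one could start from Corollary \ref{alternative_weitzenbock_formula} together with the identity $\pi^\perp\bigl(\sum_{i}e_i\times\mathcal{T}(e_{i+1},\sigma,e_i,e_{i+1})\bigr)=-\tfrac{3}{16}\tau_0^2\sigma$; the $\overline{\rho}(F^-)$ formulation above is simply the most direct route to the stated bound.)
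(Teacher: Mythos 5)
Your proposal is correct and follows essentially the same Bochner argument as the paper: the same Weitzenb\"ock identity \eqref{nearly_parallel_case}, the same evaluation $\dirac\sigma=-\tfrac{\tau_0}{4}\sigma$ for $\sigma\in\ker\fueter$ via \eqref{D_with_torsion}, and the same integration by parts yielding the coefficient $\tfrac{k}{4}-\tfrac{3}{16}\tau_0^2$. Your closing step --- deducing $\nperp\sigma=0$ from the vanishing integrand and then forcing $\tfrac{\tau_0}{4}\sigma=0$ since $\tau_0\neq0$ --- is in fact slightly more careful than the paper's terse ``so $\sigma=0$'', which as written would need strict positivity somewhere.
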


%%%%%%%%%%%%%%%%%%%%%%%%%%%%%%%%%%%%%%%%%%%%%%%%%%%%%%%%%%%%%%
%\section{Applications}

\subsection{The associative submanifolds of the $7$-sphere}

In \cite{lotay2012associative}, Lotay defines a $\gt$--structure $\varphi$ on $S^7$, writing $\mathbb{R}^8\setminus\{0\} \cong \mathbb{R}^+\times S^7$, such that 
$$\Phi_0|_{(r,p)}=r^3dr\wedge\varphi|_p+r^4\ast\varphi|_p,$$
where $\Phi_0$ is the $\spin(7)$--structure of $\mathbb{R}^8$, $r$ the radial coordinate on $\mathbb{R}^+$ and $\ast$ the Hodge star on $S^7$ induced by the round metric. Since $\Phi_0$ is closed, it follows that $d\varphi=4\ast\varphi$ i.e. $\varphi$ is a nearly parallel $\gt$--structure.

Consider the $7$--sphere as the homogeneous space $\spin(7)/G_2$, viewing $\spin(7)$ as the $\gt$ frame bundle over $S^7$. From the 
structure equations of $\spin(7)$ \cite[Chapter 4]{lotay2012associative},  the \emph{second fundamental form} $B\in \Omega^0(\Sym^2(TY)^\ast\otimes NY)$ (c.f. \cite[Definition 4.5]{lotay2012associative}) satisfies
\begin{equation}\label{second_fundamental_form_relation}
\sum_{i=1}^3e_i\times B(e_i,e_j)=0.
\end{equation}
Using \eqref{second_fundamental_form_relation}, the operator $\mathcal{B}(\sigma)$ from \eqref{operator_B} is given by
\begin{align*}
\mathcal{B}(\sigma)&=-\sum_{i=1}^3e_i\times\biggl(\sum_{j=1}^3e_j\times B(e_j,S_\sigma(e_i)\biggr)-\sum_{i,j=1}^3\langle e_i,e_j\rangle B(e_j,S_\sigma(e_i))\\
&=-\sum_{i=1}^3B(e_i, S_\sigma(e_i))
\end{align*}
Taking the inner product with the section $\sigma$ itself,
one obtains the non-positivity property\begin{align*}
\langle \mathcal{B}(\sigma),\sigma\rangle&=-\sum_{i=1}^3\langle B(e_i,S_\sigma(e_i)),\sigma\rangle=-\sum_{i=1}^3\langle S_\sigma(e_i),S_\sigma(e_i)\rangle=-\sum_{i=1}^3\left\Vert S_\sigma(e_i)\right\Vert^2.
\end{align*}

Consider the action of SU$(2)$ on $S^7$ given by
\begin{equation}\label{SU_action}
\begin{pmatrix}
z_1 \\ z_2 \\ z_3 \\ z_4 
\end{pmatrix} \mapsto \begin{pmatrix}
az_1+bz_2 \\ -\bar{b}z_1+\bar{a}z_2 \\ az_3+bz_4 \\ -\bar{b}z_3+\bar{a}z_4
\end{pmatrix} \ \ \ \text{for} \ \ \begin{pmatrix}
a        &    b   \\
-\bar{b} & \bar{a}
\end{pmatrix}\in \SU(2).
\end{equation}
By Corollary \ref{alternative_weitzenbock_formula}, we have,
$$
\fueter^2(\sigma)=\nabla^\ast\nabla\sigma+\mathcal{R}(\sigma)+\mathcal{B}(\sigma)+4\fueter(\sigma),
$$
or, in terms of the operator $\dirac$, \begin{equation}\label{Kawai_formula}
\dirac^2=\nabla^\ast\nabla\sigma+\mathcal{R}(\sigma)+\mathcal{B}(\sigma)+2\dirac(\sigma)+3\sigma,
\end{equation}
which coincides with the formula given by Kawai \cite{kawai2014deformations}. Consider the orbit $S^3\subset S^7$ of the  point $(1,0,0,0)$ under action \eqref{SU_action}. The tangent space to $S^3$ at a point $(z_1,z_2,0,0)$ is spanned by the vectors
$$
  X_1=(z_2,-z_1,0,0), \ \ X_2=(iz_2,iz_1,0,0), \ \  X_3=(iz_1,-iz_2,0,0).
$$ 
As the induced metric on $S^3$, from the round metric on $S^7$, coincides with the round metric of constant curvature $1$, the following results of \cite{bar1996dirac} can be adapted to our case.
%In \cite[Theorem 5.1.1.]{simons1968minimal} was proved that the infinitesimal minimal deformation has dimension 16 and \cite[Theorem 5.2.3.]{simons1968minimal} states that any rigid motion of $S^3$ in $S^7$ is a minimal immersion. Therefore, by equation \eqref{Kawai_formula} exist a non trivial vector field $v$ in $NS^3$, which is a minimal deformation and consequently, a (3)-eigensection of $\dirac$, i.e. $\dirac(v)=3v$.
%Observe that the condition $\Dslash_{A_0}(v)=3v$ implies that $v$ is a parallel section of $\nperp$.  Then, using the cross product, $NS^3$ can be trivialized by the frame bundle $e_1\times v,e_2\times v,e_3\times v,v$

\begin{lemma}\label{trivialized_normal_bundle}
The normal bundle $NS^3$ can be trivialized by parallel sections $\sigma_1,\dots,\sigma_4$ of the connection $\nperp$.
\begin{proof}
It suffices to show that the curvature operator $R^\perp$ vanishes (c.f. \eqref{normal_curvature}). Let $u,v$ be tangent vector fields of $S^3$, and $\sigma$ a section of $NS^3$, then the Ricci equation gives
\begin{align*}
    R^\perp(u,v)\sigma=&\sum_{k=4}^7\langle R^\perp(u,v)\sigma,\eta_k\rangle\eta_k\\
                      =&\sum_{k=4}^7(\langle R(u,v)\sigma,\eta_k\rangle + \langle [S_\sigma,S_{\eta_k}]u,v\rangle)\eta_k\\
                      =&\sum_{k=4}^7(\langle u,\sigma\rangle \langle v,\eta_k\rangle-\langle v,\sigma\rangle\langle u,\eta_k\rangle)\eta_k=0.
\end{align*}
At the third equality we used the well-known facts that the metric on $S^7$ has constant sectional curvature equal to $1$ and that $S^3 \subset S^7$ is a totally geodesic immersed submanifold.
\end{proof}
\end{lemma}

The following Weitzenböck formula relates the operator $D=\dirac-\ident$  with the Laplacian of the connection $\nperp$ on $NS^3$.

\begin{lemma}
On the normal bundle $NS^3$, the following formula holds:
\begin{equation}\label{Dirac_tilde}
D^2=\nabla^\ast\nabla+\ident.
\end{equation}
\end{lemma}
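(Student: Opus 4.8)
I would derive \eqref{Dirac_tilde} by specialising the Weitzenböck formula \eqref{Kawai_formula}, already established for associative submanifolds of Lotay's nearly parallel $S^7$, to the totally geodesic $\SU(2)$-orbit $S^3$ and watching the curvature terms collapse. The plan has three short steps: kill the extrinsic term, evaluate the partial Ricci term explicitly, and then expand $D^2$.

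First I would eliminate the extrinsic contribution. Since $S^3\subset S^7$ is totally geodesic (as recorded in the proof of Lemma \ref{trivialized_normal_bundle}), its second fundamental form $B$ vanishes identically, and because the operator $\mathcal{B}$ is assembled entirely out of $B$ through \eqref{operator_B}, this forces $\mathcal{B}\equiv 0$ on $NS^3$. Second, I would compute the partial Ricci term from the fact that the round metric on $S^7$ has constant sectional curvature $1$, so that (in the paper's sign convention) $R(u,v)w=\langle v,w\rangle u-\langle u,w\rangle v$; for a normal section $\sigma$ and an orthonormal tangent frame $\{e_1,e_2,e_3\}$ one has $\langle\sigma,e_i\rangle=0$, hence $R(e_i,\sigma)e_i=-\sigma$, and summing over $i$ and projecting to $NS^3$ gives $\mathcal{R}(\sigma)=\pi^\perp(-3\sigma)=-3\sigma$, i.e. $\mathcal{R}=-3\,\ident$.

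Third, I would substitute $\mathcal{B}=0$ and $\mathcal{R}=-3\,\ident$ into \eqref{Kawai_formula}; the constants $-3$ and $+3$ cancel and the formula collapses to $\dirac^2\sigma=\nabla^\ast\nabla\sigma+2\,\dirac(\sigma)$. Since $\ident$ commutes with $\dirac$, this yields
\[
D^2=(\dirac-\ident)^2=\dirac^2-2\,\dirac+\ident=\nabla^\ast\nabla+\ident,
\]
which is \eqref{Dirac_tilde}.

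The computation is essentially bookkeeping, so I do not anticipate a genuine obstacle; the points needing care are keeping track of the numerical constants already absorbed into \eqref{Kawai_formula} (the torsion constant $\tau_0=4$ of Lotay's structure and the scalar curvature of the round $3$-sphere), checking that it is $\dirac$ and not $\fueter=\dirac+\ident$ which appears there, and pinning down the curvature sign so that $\mathcal{R}=-3\,\ident$ rather than $+3\,\ident$. As a consistency check, restricting \eqref{Dirac_tilde} to the $\nperp$-parallel trivialisation $\sigma_1,\dots,\sigma_4$ of $NS^3$ furnished by Lemma \ref{trivialized_normal_bundle} recovers the corresponding identity for the untwisted operator on the round $S^3$.
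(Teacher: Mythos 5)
Your proposal is correct and follows essentially the same route as the paper: specialise \eqref{Kawai_formula}, drop $\mathcal{B}$ because $S^3$ is totally geodesic, evaluate $\mathcal{R}(\sigma)=\bigl(\sum_i\langle\sigma,e_i\rangle e_i-\langle e_i,e_i\rangle\sigma\bigr)^{\perp}=-3\sigma$ from constant curvature, and expand $D^2=\dirac^2-2\dirac+\ident$. The only (cosmetic) difference is the order of operations, and your explicit flagging of the curvature sign convention is a sensible precaution given that the paper states it with the opposite sign in the proof of Lemma \ref{trivialized_normal_bundle}.
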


\begin{proof}
In a local orthonormal frame  $e_1,e_2,e_3$ around $p\in S^3$, we compute
\begin{align*}
D^2(\sigma) 
        &=\dirac^2(\sigma)-2\dirac(\sigma)+\sigma\\
        &=\nabla^\ast\nabla\sigma+\mathcal{R}(\sigma)+4\sigma\\
        &=\nabla^\ast\nabla\sigma+\Bigl(\sum_{i=1}^3\langle\sigma, e_i\rangle e_i-\langle e_i,e_i\rangle \sigma\Bigr)^\perp+4\sigma\\
        &=\nabla^\ast\nabla\sigma+\sigma.
        \qedhere
\end{align*}
\end{proof}

Consider a  basis $1= f_0, f_1,f_2,\dots$ of $L^2(S^3,\mathbb{R})$, consisting of eigenfunctions of the Laplace operator: 
$$
\Delta f_i=\lambda_if_i.
$$ 
The next lemma describes a natural eigenbasis for the operator $D^2$ on sections of  $NS^3$.

\begin{lemma}\label{eigenvalue_Dirac_square}
$D^2(f_i\sigma_k)=(\lambda_i+1)(f_i\sigma_k)$.
\begin{proof}
This follows directly from Lemma \ref{trivialized_normal_bundle} and 
 \eqref{Dirac_tilde}.
\end{proof}
\end{lemma}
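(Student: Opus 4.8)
The plan is to combine the two preceding lemmas directly: the Weitzenböck identity \eqref{Dirac_tilde}, which gives $D^2=\nabla^\ast\nabla+\ident$ on $NS^3$, together with Lemma \ref{trivialized_normal_bundle}, which produces a global frame $\sigma_1,\dots,\sigma_4$ of $NS^3$ that is parallel for $\nperp$. The only real content is to observe that, on a tensor of the form $f_i\sigma_k$ with $\sigma_k$ parallel, the connection Laplacian $\nabla^\ast\nabla$ collapses to the scalar Laplace--Beltrami operator $\Delta$ acting on the function $f_i$.

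Concretely, I would fix $p\in S^3$ and an orthonormal frame $e_1,e_2,e_3$ of $TS^3$ with the property \eqref{geodesic_frame} at $p$. By the Leibniz rule and $\nperp\sigma_k=0$ one has $\nperp_{e_j}(f_i\sigma_k)=(e_j f_i)\,\sigma_k$, and iterating (using $(\nabla_{e_j}e_j)_p=0$),
\begin{align*}
\nabla^\ast\nabla(f_i\sigma_k)
&=-\sum_{j=1}^3\nperp_{e_j}\nperp_{e_j}(f_i\sigma_k)-\nperp_{\nabla_{e_j}e_j}(f_i\sigma_k)\\
&=-\Bigl(\sum_{j=1}^3 e_je_j f_i\Bigr)\sigma_k
=(\Delta f_i)\,\sigma_k
=\lambda_i\, f_i\sigma_k,
\end{align*}
where the sign convention for $\Delta$ is the geometers' (positive-spectrum) one, consistent with $\Delta f_i=\lambda_i f_i$ and with the definition of $\nabla^\ast\nabla$ recalled in the Introduction. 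Substituting this into \eqref{Dirac_tilde} gives
$$
D^2(f_i\sigma_k)=\nabla^\ast\nabla(f_i\sigma_k)+f_i\sigma_k=(\lambda_i+1)(f_i\sigma_k),
$$
which is the claim.

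There is essentially no obstacle here: the argument is a two-line consequence of the facts already established, and the only point requiring a moment's care is the bookkeeping of sign conventions for the Laplacian (so that $\nabla^\ast\nabla$ acting componentwise on a parallel frame really reproduces $+\Delta$ rather than $-\Delta$), which is fixed once and for all by the conventions of Section \ref{sec: General Fueter-Dirac Weitzenbock}. One should also note in passing that the collection $\{f_i\sigma_k\}_{i\geq 0,\,1\le k\le 4}$ is an $L^2$-orthonormal (after normalising the $f_i$) spanning set of $\Omega^0(NS^3)$, so that Lemma \ref{eigenvalue_Dirac_square} indeed furnishes a complete eigenbasis for $D^2$, not merely a family of eigenvectors.
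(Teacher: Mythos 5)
Your proposal is correct and follows exactly the route the paper intends: the paper's proof is the one-line remark that the claim ``follows directly'' from Lemma \ref{trivialized_normal_bundle} and \eqref{Dirac_tilde}, and your computation simply spells out the omitted step that $\nabla^\ast\nabla(f_i\sigma_k)=(\Delta f_i)\sigma_k$ because the $\sigma_k$ are $\nperp$-parallel. The sign bookkeeping and the closing remark about completeness of the family $\{f_i\sigma_k\}$ are both consistent with the conventions and the subsequent use (Corollary \ref{multiplicity_eigenvalue}) in the paper.
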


Since the metric on $S^3$ has constant curvature 1, the eigenvalues of the Laplace operator on $S^3$ are 
$$
  \lambda_k=k(k+2) \quad k\geq0,
$$
with multiplicities $m_k=(k+1)^2$  \cite[Proposition 22.2 and Corollary 22.1]{shubin1987pseudodifferential}. Together with Lemma \ref{eigenvalue_Dirac_square}, this gives:

\begin{corollary}\label{multiplicity_eigenvalue}
$D^2$ has eigenvalues $(k+1)^2$ with multiplicities $4(k+1)^2$,  $k\geq0$.
\end{corollary}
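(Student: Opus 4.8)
The plan is to assemble Corollary \ref{multiplicity_eigenvalue} directly from the two ingredients already in hand: the eigenvalue description in Lemma \ref{eigenvalue_Dirac_square} and the spectral data of the scalar Laplacian on the round $3$-sphere. First I would recall that, by Lemma \ref{trivialized_normal_bundle}, the normal bundle $NS^3$ is trivialised by the four $\nperp$-parallel sections $\sigma_1,\dots,\sigma_4$, so any $L^2$-section of $NS^3$ decomposes uniquely as $\sum_{k=1}^4 g_k\,\sigma_k$ with $g_k\in L^2(S^3,\mathbb{R})$; expanding each $g_k$ in the eigenbasis $\{f_i\}$ of $\Delta$ shows that $\{f_i\,\sigma_k\}_{i\ge 0,\,1\le k\le 4}$ is a complete orthogonal system for $L^2(S^3,NS^3)$. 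Hence the spectrum of $D^2$ is exhausted by the values $\lambda_i+1$ appearing in Lemma \ref{eigenvalue_Dirac_square}, each occurring with the multiplicity $4\,m_i$ coming from the four choices of $\sigma_k$ together with the multiplicity $m_i$ of $\lambda_i$ as a Laplace eigenvalue.

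Next I would substitute the known spectral data for $(S^3,\text{round})$: the eigenvalues of $\Delta$ are $\lambda_k=k(k+2)$ for $k\ge 0$ with multiplicities $m_k=(k+1)^2$, as quoted from \cite{shubin1987pseudodifferential}. Then Lemma \ref{eigenvalue_Dirac_square} gives the $D^2$-eigenvalue
\begin{equation*}
\lambda_k+1=k(k+2)+1=(k+1)^2,
\end{equation*}
and the corresponding eigenspace, spanned by $\{f_i\,\sigma_j : \Delta f_i=\lambda_k f_i,\ 1\le j\le 4\}$, has dimension $4\,m_k=4(k+1)^2$. This yields exactly the asserted statement: $D^2$ has eigenvalues $(k+1)^2$ with multiplicities $4(k+1)^2$ for $k\ge 0$.

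Since this is a purely bookkeeping argument, I do not anticipate a genuine obstacle; the only point requiring a sentence of care is the completeness claim, i.e. verifying that no eigenvalues of $D^2$ are missed. This follows because $D^2=\nabla^\ast\nabla+\ident$ is a self-adjoint elliptic operator on a closed manifold, so its eigensections form a complete orthonormal basis, and the explicit family $\{f_i\,\sigma_k\}$ already furnishes such a basis by the triviality of $NS^3$ and the compatibility of $\nperp$ with the Laplacian established in \eqref{Dirac_tilde}. One should also note in passing that distinct values of $k$ give distinct eigenvalues $(k+1)^2$, so there is no accidental collapsing of eigenspaces and the stated multiplicities are exact rather than lower bounds.
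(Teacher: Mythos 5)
Your argument is correct and follows essentially the same route as the paper: combine Lemma \ref{eigenvalue_Dirac_square} with the spectral data $\lambda_k=k(k+2)$, $m_k=(k+1)^2$ for the round $S^3$ to get eigenvalues $(k+1)^2$ with multiplicities $4(k+1)^2$. Your extra remarks on completeness of the family $\{f_i\sigma_k\}$ and on the non-collapsing of eigenspaces are sound and merely make explicit what the paper leaves implicit.
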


In general, for an operator $T$ and a vector $u$ such that $T^2u=\mu^2u$,
if
$$
v^\pm:=(T\pm\mu) u \neq0
$$
then $v^\pm$ is an eigenvector of $T$ with eigenvalue $\pm\mu$. Let us apply this principle to $T=D$, with $\mu_k^2=(k+1)^2$ and $u_k=f_k\sigma_j$, for $j=1,\dots,4$. 

Let us first look at the case $k=0$, in which $f_0=1$ and $\lambda_0=0$, so $u_0=\sigma_j$ and $\mu_0^2=1$, i.e.,
$$
v^\pm=(D\pm\mu_0)\sigma_j=D\sigma_j\pm\sigma_j.
$$
Now, $\dirac\sigma_j=0$ by Lemma \ref{trivialized_normal_bundle}, so $D\sigma_j=-\sigma_j$ and therefore $v^+=0$ and $v^-=-2\sigma_j$. Accordingly, $v^-$ is an eigenvector of $D$ with eigenvalue $-\mu_0=-1$. Since $v^-=-2\sigma_j$, for $j=1,\dots,4$, the multiplicity of $-\mu_0=-1$ is at least $4$, but the multiplicity of $(-\mu_0)^2=\mu_0^2=1$ is already $4$, by Corollary \ref{multiplicity_eigenvalue}, therefore the multiplicity of $-\mu_0=-1$ is exactly $4$.% And for $\mu^-_0=-1$, we have
%$$
%v^\pm=(D\pm\mu^-_0)\sigma_j=D\sigma_j\mp\sigma_j.
%$$
%Analogously, $v^+=-2\sigma_j$ and $v^-=0$. Therefore, $v^+$ is an eigenvector of $D$ with eigenvalue $\mu^-_0=-1$ and its multiplicity is exactly $4$.
%Thus $-1$ is an eigenvalue of $D$ of multiplicity at least $4$, hence exactly $4$, since that is the multiplicity of the eigenvalue $1$ of $D^2$.

Now, for $k\geq 1$, we take $u_k=f_k\sigma_j$ and $\mu_k=k+1$, and
use the trivial fact that $e_i\times \sigma_j$ and $\sigma_j$ are linearly independent for all $i,j$:\begin{align*}
v^\pm_k=&(D\pm\mu_k)u_k=\dirac u_k-(1\mp\mu_k)u_k\\
       =&\sum_{i=1}^3e_i(f_k)e_i\times \sigma_j-\underbrace{(1\mp\mu_k)}_{\neq 0}\underbrace{f_k}_{\neq 0}\sigma_j\neq 0.
\end{align*}
Thus $v^\pm_k$ is an eigenvector of $D$ with eigenvalue $\pm\mu_k$, and it follows that $v^\pm$ is an eigenvector of $\dirac$ with eigenvalue $1\pm\mu_k$, such that $m(1+\mu_k)+m(1-\mu_k)=4(k+1)^2$.
 It remains to determine the multiplicities of the eigenvalues $1\pm(k+1)$.
We introduce the following notation:
$$
\mu^+_0:=1-\mu_0=0, \quad 
\mu^+_k:=1+\mu_k=k+2, \qandq 
\mu^+_{-k}:=1-\mu_k=-k, \quad k\geq 1.
$$
From Corollary \ref{multiplicity_eigenvalue}, multiplicities of opposite index add up as $m(\mu^+_k)+m(\mu^+_{-k})=4(k+1)^2$. Alternatively, in the sign convention of Remark \ref{sign-convention2}, we denote the eigenvalues of $\dirac$ by 
$$
\mu^-_0=0, \quad 
\mu^-_{-k}=-k-2, \qandq 
\mu^-_k=k, \quad k\geq 1,
$$
and again we know $m(\mu^-_k)+m(\mu^-_{-k})=4(k+1)^2$.
The multiplicities in both sign conventions satisfy the following relations:

\begin{lemma}\label{multiplicity_formula}
$$m(\mu^+_{-k})=m(\mu^-_k)=2(k+1)(k+2), \quad  k\geq 0.$$
and
$$ m(\mu^+_k)=m(\mu^-_{-k})=2k(k+1), \quad k\geq 1.$$ 
\end{lemma}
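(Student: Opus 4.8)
\emph{Step 1: reduction to one family of multiplicities.} By Corollary \ref{multiplicity_eigenvalue} the operator $D^{2}=\dirac^{2}-2\dirac+\ident$ has on $NS^{3}$ the eigenvalue $(k+1)^{2}$ with multiplicity $4(k+1)^{2}$; write $W_{k}$ for the corresponding eigenspace, equivalently the $\lambda_{k}=k(k+2)$ eigenspace of the $\nperp$--Laplacian spanned by the $f_{i}\sigma_{j}$ with $\lambda_{i}=\lambda_{k}$, of dimension $4(k+1)^{2}$. On $W_{k}$ the operator $\dirac=D+\ident$ takes eigenvalues among $1+(k+1)=k+2$ and $1-(k+1)=-k$, which are $\lambda^{+}_{k}$ and $\lambda^{+}_{-k}$ in the notation above; for $k=0$ only $\lambda^{+}_{0}=0$ occurs, with multiplicity $4$, since $\dirac\sigma_{j}=0$. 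Hence $m(\lambda^{+}_{k})+m(\lambda^{+}_{-k})=4(k+1)^{2}$ for $k\ge 1$, and it suffices to compute one of them, say $m(\lambda^{+}_{-k})$, for all $k\ge 0$.

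\emph{Step 2: the two orientation conventions.} The orientation-reversing automorphism of Remark \ref{sign-convention2} exchanges the model forms and negates the associator $\chi$, hence negates the vector cross product that defines $\fueter$; consequently the Fueter--Dirac operators attached to the two conventions differ by an overall sign, and their spectra are negatives of one another with equal multiplicities. This gives $m(\lambda^{+}_{-k})=m(\lambda^{-}_{k})$ and $m(\lambda^{+}_{k})=m(\lambda^{-}_{-k})$, so the Lemma reduces to the single statement $m(\lambda^{+}_{-k})=2(k+1)(k+2)$ for $k\ge 0$.

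\emph{Step 3: computation by $\SU(2)$--harmonic analysis, adapting \cite{bar1996dirac}.} The orbit map of the action \eqref{SU_action} identifies $S^{3}$ with $\SU(2)$ so that \eqref{SU_action} becomes left translation, while by Lemma \ref{trivialized_normal_bundle} the bundle $NS^{3}$ is, $\SU(2)$--equivariantly, the trivial bundle $\SU(2)\times\mathbb{H}$ with the normal fibre carrying the standard module; over $\mathbb{C}$ this fibre is $V_{1}\oplus V_{1}$, where $V_{n}$ is the irreducible $\SU(2)$--module with $\dim V_{n}=n+1$. Peter--Weyl then gives, as $\SU(2)$--modules,
\[
        L^{2}(NS^{3})\otimes\mathbb{C}\;\cong\;\bigoplus_{n\ge 0}2\,(V_{n}\otimes V_{1})\otimes V_{n}^{\ast}\;=\;\bigoplus_{n\ge 0}2\,(V_{n+1}\oplus V_{n-1})\otimes V_{n}^{\ast},\qquad V_{-1}:=0,
\]
with the group acting only on the $V_{n\pm1}$ factors. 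Since the $\SU(2)$--action \eqref{SU_action} lies in the symmetry group $\spin(7)$ of Lotay's $\varphi$, it preserves $\varphi$ and hence $\dirac$ is $\SU(2)$--equivariant; being a function of $D$ and $D^{2}$, $\dirac$ preserves each $W_{k}$, and for $k\ge 1$ one has $W_{k}\otimes\mathbb{C}\cong(V_{k+1}\otimes M_{k+1})\oplus(V_{k-1}\otimes M_{k-1})$ with $\dim M_{k\pm1}=2(k+1)$. Therefore $\dirac|_{W_{k}\otimes\mathbb{C}}$ acts as $\ident_{V_{k+1}}\otimes A_{k+1}\oplus\ident_{V_{k-1}}\otimes A_{k-1}$, with $A_{k\pm1}\in\End(M_{k\pm1})$ whose eigenvalues lie in $\{-k,\,k+2\}$. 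Writing $\dirac$ in the left-invariant frame on $\SU(2)$ reduces it on each Clebsch--Gordan summand $V_{n\pm1}\subset V_{n}\otimes V_{1}$ to a Casimir-type operator, exactly as in Bär's computation of the Dirac spectrum of the round $S^{3}$; this, together with the normalisation $\dirac\sigma_{j}=0$ (the case $k=0$), forces $A_{k+1}=-k$ and $A_{k-1}=k+2$. Counting dimensions,
\[
        m(\lambda^{+}_{-k})=(\dim V_{k+1})\cdot 2(k+1)=2(k+1)(k+2),\qquad m(\lambda^{+}_{k})=(\dim V_{k-1})\cdot 2(k+1)=2k(k+1),
\]
for $k\ge 1$, and $m(\lambda^{+}_{-0})=4=2\cdot1\cdot2$ by Step 1; combined with Step 2 this proves the Lemma.

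\emph{Main obstacle.} The one non-formal point is the identification $A_{k+1}=-k$, $A_{k-1}=k+2$: $\SU(2)$--equivariance together with preservation of $W_{k}$ only confines these operators' eigenvalues to $\{-k,k+2\}$, and it allows a priori a mixture of both values on either isotypic piece. Pinning down which value attaches to which Clebsch--Gordan summand genuinely requires the explicit Lie-algebraic expression for $\dirac$ on $S^{3}=\SU(2)$ (Bär's method), or equivalently an induction on $k$ using the ladder operators $D\pm(k+1)$ anchored at $k=0,1$. Everything else — the reduction of Step 1, the sign symmetry of Step 2, and the dimension count — is routine.
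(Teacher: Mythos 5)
Your Steps 1 and 2 cannot, even granting them, prove the lemma: the sum rule $m(\lambda^+_k)+m(\lambda^+_{-k})=4(k+1)^2$ together with the sign-flip identities $m(\lambda^+_{-k})=m(\lambda^-_k)$ and $m(\lambda^+_k)=m(\lambda^-_{-k})$ leaves two free parameters for each $k\geq 1$; these relations are symmetric in the pair of eigenvalues $-k$ and $k+2$ and say nothing about how the total multiplicity $4(k+1)^2$ of $W_k$ is split between them. That split is the entire content of the lemma, and in your argument it is carried by the assignment $A_{k+1}=-k$, $A_{k-1}=k+2$ in Step 3 --- which you assert and then, in your closing paragraph, concede is not established. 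Indeed, $\SU(2)$--equivariance only forces $\dirac|_{W_k}$ to act as $\ident_{V_{k\pm1}}\otimes A_{k\pm1}$ with the eigenvalues of $A_{k\pm1}$ contained in $\{-k,k+2\}$; since the multiplicity spaces $M_{k\pm1}$ have dimension $2(k+1)>1$, Schur's lemma does not make $A_{k\pm1}$ scalar, and an arbitrary mixture of the two eigenvalues on each isotypic piece is a priori possible. The base case $\dirac\sigma_j=0$ constrains only $W_0$ and does not propagate to $W_k$ without the explicit Lie-algebraic diagonalisation or the ladder-operator induction that you name but do not carry out. So the proposal has a genuine gap at its only substantive step.

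For comparison, the paper closes precisely this gap by a different device: it passes to the shifted operator $\dirac-\tfrac{3}{2}$, whose eigenvalues in the two orientation conventions are the $\alpha^+_k$ and $\alpha^-_k$, and uses that the value $k+\tfrac{3}{2}$ occurs in both spectra --- as $\alpha^+_{k+1}$ and as $\alpha^-_k$ --- with equal multiplicity, i.e.\ $m(\alpha^+_{k+1})=m(\alpha^-_k)$. Fed into the sum rule this yields the recursion $m(\lambda^+_{-(k+1)})=4(k+2)^2-m(\lambda^-_k)$, which together with the base case $m(\lambda^+_0)=4$ determines every multiplicity by induction. If you wish to keep the Peter--Weyl framework, you must either reproduce B\"ar's explicit computation of $\dirac$ in a left-invariant frame on $\SU(2)$ to see which Clebsch--Gordan summand $V_{k\pm1}\subset V_k\otimes V_1$ carries which eigenvalue, or supply an equivalent extra input such as the isospectrality of $\dirac-\tfrac{3}{2}$ used in the paper.
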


\begin{proof}
From the above, the operator $\dirac-\frac{3}{2}$ has eigenvalues 
$$
\alpha^+_0=-\frac{3}{2}, \quad 
\alpha^+_k=k+\frac{3}{2}-1 \qandq 
\alpha^+_{-k}=-k-\frac{3}{2}.
$$
%In the opposite sign convention from Remark \ref{sign-convention2},  its eigenvalues are
Let $\alpha^-_k:=-\alpha^+_{-k}$. Since $\mu^-_k=-\mu^+_{-k}$, we have
%$$
 % \alpha^-_0=\frac{3}{2}, \\ \alpha^-_k=k+\frac{3}{2} \qandq \alpha^-_{-k}=1-k-\frac{3}{2}.
%$$
$m(\alpha^\pm_k)=m(\mu^\pm_k)$, for all $k\in\mathbb{Z}$, and so 
$$
m(\alpha^\pm_k)+m(\alpha^\pm_{-k})=4(k+1)^2.
$$ 
Now the claim clearly holds  for $k=0$ and, by induction on $k \geq 1$, we have
\begin{align*}
m(\mu^+_{-(k+1)})&= m(\alpha^+_{-(k+1)})=4(k+2)^2-m(\alpha^+_{(k+1)})\\
                     &=4(k+2)^2-m(\alpha^-_k)=4(k^2+4k+4)-2(k+1)(k+2)\\
                     &=2(k+2)(k+3).
\end{align*}
To obtain the second equality we used the relation
$$
\alpha^+_{(k+1)}=(k+1)+\frac{3}{2}-1=\alpha^-_k,
$$
and for the last one we used the induction hypothesis on $\alpha^-_k$.
\end{proof}

The group $\Aut(S^7,\varphi)=\spin(7)$ of automorphisms of $S^7$ which fix the $\gt$--structure induces trivial associative deformations, and the associative $3$--sphere is invariant by the action of the embedded subgroup $K=\SU(2)\times\SU(2)\times\SU(2) /\mathbb{Z}_2\subset\spin(7)$, where $\mathbb{Z}_2$ is generated by $(-1,-1,-1)$ \cite[Theorem IV 1.38]{mclean1996deformations}. Therefore the space of infinitesimal associative deformations of $S^3$ has dimension at least $\dim(\spin(7)/K)=12$. 
\begin{corollary}
The $3$-sphere in $S^7$ is rigid as an associative submanifold.
\end{corollary}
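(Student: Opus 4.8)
The plan is to combine the spectral information assembled above with the lower bound coming from symmetries, and to show that the two coincide. Concretely, I would argue that $\dim\ker\fueter = 12 = \dim(\spin(7)/K)$, so that the $12$--dimensional space of trivial deformations generated by the $\spin(7)$--action already exhausts $\ker\fueter$; since the infinitesimal associative deformations of $S^3$ are exactly $\ker\fueter$, this is precisely rigidity in the homogeneous sense, i.e. \emph{every} such deformation is trivial.

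The key is to express $\ker\fueter$ spectrally. By the nearly parallel Weitzenböck analysis one has $\fueter = \dirac + \ident$ on $NS^3$: this is the shift by $\tau_0/4 = 1$ produced by $P_1+P_2+P_3$ together with the torsion term of \eqref{D_with_torsion}, the same shift that turns our formula into Kawai's \eqref{Kawai_formula}. Hence $\fueter\sigma = 0$ iff $\dirac\sigma = -\sigma$, so $\dim\ker\fueter$ equals the multiplicity of the eigenvalue $-1$ of $\dirac$ on $NS^3$, i.e. of $\lambda^+_{-1}$. Now $NS^3$ is trivialised by $\nperp$--parallel sections (Lemma \ref{trivialized_normal_bundle}); on it $D = \dirac - \ident$ satisfies $D^2 = \nabla^\ast\nabla + \ident$ \eqref{Dirac_tilde}; the round $S^3$ has Laplace spectrum $\lambda_k = k(k+2)$ with multiplicity $(k+1)^2$, giving $D^2$ eigenvalue $(k+1)^2$ with multiplicity $4(k+1)^2$ (Corollary \ref{multiplicity_eigenvalue}); and decomposing each $D^2$--eigenspace into the $\pm(k+1)$--eigenspaces of $D$ yields Lemma \ref{multiplicity_formula}, whence $m(\lambda^+_{-1}) = 2(1+1)(1+2) = 12$. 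Thus $\dim\ker\fueter = 12$.

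It remains to match this with the trivial deformations: the orbit map $\spin(7)\to\{\text{associatives}\}$, $g\mapsto g(S^3)$, has stabiliser $K$ at $S^3$, so its differential at the identity embeds $\mathfrak{spin}(7)/\mathfrak{k}$, of dimension $21-9 = 12$, into $\ker\fueter$ as the space of trivial deformations; since $\dim\ker\fueter = 12$ as well, this inclusion is an equality and $S^3$ is rigid. Most of the work has already been done by the preceding lemmata, so the corollary is essentially an assembly; the delicate point — already resolved above, but worth double-checking — is the identification $\fueter = \dirac + \ident$ and the ensuing claim that $\ker\fueter$ is the $\lambda^+_{-1}$--eigenspace (multiplicity $12$) and not, say, the $\lambda^+_{1}$--eigenspace (multiplicity $4$). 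This depends on keeping the sign of $\tau_0$ in $d\varphi = 4\ast\varphi$, the orientation convention \eqref{usual_G2-structure}, the identification $NY\cong\real(S^+\otimes_{\mathbb{C}}S^-)$ and the $\lambda^\pm$ labelling mutually consistent.
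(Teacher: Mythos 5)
Your proposal is correct and follows essentially the same route as the paper: identify $\ker\fueter$ with the $\lambda^+_{-1}$--eigenspace of $\dirac$ (of multiplicity $12$ by Lemma \ref{multiplicity_formula}) and observe that the $12$--dimensional space of trivial deformations coming from $\spin(7)/K$ already fills it. You actually justify the shift $\fueter=\dirac+\ident$ (via $\tau_0=4$ and the torsion term in \eqref{D_with_torsion}) more explicitly than the paper, which simply asserts that $\lambda^+_{-1}$ is the relevant eigenvalue; the only slight imprecision is attributing that shift to $P_1+P_2+P_3$, which enter the squared formula rather than \eqref{D_with_torsion} itself.
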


\begin{proof}
Since $\mu^+_{-1}$ is the eigenvalue corresponding to the space of infinitesimal associative deformations, then, by Lemma \ref{multiplicity_formula}, $\dim(\ker \fueter)=m(\mu^+_{-1})=12$.
\end{proof}

\subsection{The example of Bryant and  Salamon}

In \cite{bryant1989construction}, Bryant and Salamon constructed an example of a $7$--manifold with constant scalar curvature and  holonomy exactly $\gt$:
\begin{theorem}
Let $(M^3,ds^2)$ be a Riemannian $3$-manifold with constant sectional curvature $K= 1$. Let $\mathbf{S}(M)\rightarrow M$ denote the standard spinor bundle, let $r:\mathbf{S}(M)\rightarrow \mathbb{R}$ be the squared Euclidean norm, and let $d\sigma^2$ denote the quadratic form of rank $4$ on the total space of $\mathbf{S}(M)\cong M^3\times \mathbb{R}^4$ which restricts to the standard flat metric on each  fibre and whose null space at each point is the horizontal space of the standard spin connection. Then the following metric on       $\mathbf{S}(M)$ 
is complete and has holonomy $\gt$:
\begin{equation}\label{metricbs}
g=3(r+1)^{2/3}ds^2+4(r+1)^{-1/3}d\sigma^2.
\end{equation}
Here $r=|a|^2=a\overline{a}$ denotes squared radial distance and $a: P_{\spin(3)}(M)\times \mathbb{H}\rightarrow \mathbb{H}$ is the projection onto the second factor.
\end{theorem}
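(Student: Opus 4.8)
The statement is the Bryant--Salamon construction, so the natural route is to produce an explicit \emph{torsion-free} $G_2$--structure $\varphi$ on the total space of $\mathbf{S}(M)$ by a cohomogeneity-one ansatz, and then read off completeness and the holonomy. First I would fix the frame: on $\mathbf{S}(M)$ the tangent bundle splits as $\mathcal{H}\oplus\mathcal{V}$, with $\mathcal{V}\cong\mathbb{H}$ the vertical bundle and $\mathcal{H}$ the horizontal distribution of the spin connection, canonically isomorphic to the pullback of $TM$. Taking a local $ds^2$--orthonormal coframe $e^1,e^2,e^3$ on $M$ and its horizontal lift, and letting $\omega_1,\omega_2,\omega_3$ be the standard self-dual triple of $2$--forms on the fibres $\mathbb{H}\cong\R^4$ (the triple appearing in \eqref{varphi_2}), extended to $\mathbf{S}(M)$ through the spin connection, the group $\spin(3)\cong\sptc(1)$ acts on $(e^1,e^2,e^3)$ and on $(\omega_1,\omega_2,\omega_3)$ by the same vector representation. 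Comparing with the model on $\R^7=\imag(\mathbb{H})\oplus\mathbb{H}$, for any positive radial profiles $u=u(r)$, $v=v(r)$ the $3$--form
\[
\varphi_{u,v}:=u^3\,e^1\wedge e^2\wedge e^3+u v^2\sum_{i=1}^3 e^i\wedge\omega_i
\]
is a $\sptc(1)$--invariant $G_2$--structure on $\mathbf{S}(M)$ adapted to the splitting $\mathcal{H}\oplus\mathcal{V}$, whose induced metric is $g_{u,v}=u^2\,ds^2+v^2\,d\sigma^2$.

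The core of the argument is to impose $d\varphi_{u,v}=0$ and $d{\ast}\varphi_{u,v}=0$, i.e.\ to kill all four torsion forms of Section \ref{sec: torsion tensor}. Differentiating uses three inputs: the first structure equation for the lifted coframe, $de^i=-\theta^i_{\ j}\wedge e^j$ together with $d\theta^i_{\ j}=-\theta^i_{\ k}\wedge\theta^k_{\ j}+e^i\wedge e^j$, where the curvature term is exactly $e^i\wedge e^j$ because $M$ has constant curvature $1$; the transformation law $d\omega_i=-\sum_j\beta_{ij}\wedge\omega_j$ with $\beta$ the same connection acting on the $\omega$--triple (so again its curvature is controlled by $e^i\wedge e^j$); and $dr=2\langle\nabla a,a\rangle$ for the radial function. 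Substituting into $d\varphi_{u,v}$ and $d{\ast}\varphi_{u,v}$, the pure connection terms cancel by the $\sptc(1)$--invariance of $\sum_i e^i\otimes\omega_i$, and one is left with a pair of first-order ODEs in $r$ coupling $u,v,u',v'$. Solving this system (it integrates in elementary terms, and in fact forces $uv^2$ to be constant) and normalizing at the zero section $r=0$ yields $u^2=3(r+1)^{2/3}$, $v^2=4(r+1)^{-1/3}$, so $g_{u,v}$ is precisely \eqref{metricbs} and its $G_2$--structure is torsion-free. I expect this differentiation-and-integration step, with the careful bookkeeping of the connection and curvature terms arising from differentiating the tautological vertical forms, to be the main obstacle.

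Completeness is then elementary: $g$ depends only on $r\in[0,\infty)$, near $r=0$ it is the smooth bundle metric (the locus $r=0$ being merely the zero section), the base coefficient $3(r+1)^{2/3}$ is bounded below, and the radial direction in the fibres has infinite length since, writing $\rho=|a|$, $\int_0^\infty 2(1+\rho^2)^{-1/6}\,d\rho=\infty$; hence every geodesic extends for all time. For the holonomy, $d\varphi=d{\ast}\varphi=0$ gives $\mathrm{Hol}(g)\subseteq G_2$, so $g$ is Ricci-flat, and (passing to the universal cover if $M\neq S^3$) it remains to show $g$ is irreducible and non-symmetric. Non-symmetry is automatic, since a Ricci-flat symmetric space is flat whereas $R\neq 0$ here (the base curvature pulls back nontrivially). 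For irreducibility one notes that a de Rham factor would be Ricci-flat, hence flat whenever its dimension is $\le 3$, so any reduction of $\R^7$ would produce a parallel vector field; but $g$ has no parallel $1$--form, because $|dr|_g$ is non-constant while the only $\sptc(1)$--invariant directions are radial. Equivalently, and most cleanly, one computes $R$ at the zero section, where the formulae are simplest, and verifies via Ambrose--Singer that the curvature operators already span $\mathfrak{g}_2$ and lie in no proper subalgebra; with Berger's list this forces $\mathrm{Hol}^0(g)=G_2$, hence $\mathrm{Hol}(g)=G_2$. This verification is the second nontrivial ingredient, as it requires explicitly excluding the subalgebras $\mathfrak{su}(3)$, $\mathfrak{so}(4)$ and $\mathfrak{su}(2)$ inside $\mathfrak{g}_2$.
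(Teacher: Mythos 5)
This theorem is not proved in the paper at all: it is quoted verbatim from Bryant--Salamon \cite{bryant1989construction} as background for the rigidity application that follows, so there is no internal argument to compare yours against. Your outline is, in substance, the original Bryant--Salamon proof: the cohomogeneity-one ansatz $\varphi_{u,v}=u^3e^{123}+uv^2\sum e^i\wedge\omega_i$ adapted to the splitting $\mathcal{H}\oplus\mathcal{V}$, the reduction of $d\varphi=d{\ast}\varphi=0$ to an ODE system via the structure equations of the spin connection (where constant curvature $1$ is exactly what makes the curvature terms close up), the explicit profile functions matching \eqref{metricbs}, completeness from the divergence of $\int(1+\rho^2)^{-1/6}d\rho$, and holonomy exactly $G_2$ via Berger plus a curvature computation. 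Two caveats. First, the two computational cores --- deriving and integrating the ODEs, and verifying that the curvature operators span $\mathfrak{g}_2$ --- are only described, not carried out; as you yourself note, these are where all the work lives, so the proposal is a correct plan rather than a proof. Second, your shortcut for irreducibility (``$g$ has no parallel $1$--form because the only $\sptc(1)$--invariant directions are radial'') is not airtight as stated: a parallel $1$--form need not be invariant under the isometry group. What is invariant is the \emph{space} of parallel vector fields, equivalently the flat de Rham factor, and the isotropy representation at a point of the zero section decomposes as $\mathbb{R}^3\oplus\mathbb{H}$ with no trivial summand; ruling out the invariant subspaces $\mathbb{R}^3$ and $\mathbb{H}$ as parallel flat distributions still requires looking at the curvature, so your ``most cleanly'' fallback via Ambrose--Singer is in fact the necessary route, not an optional one.
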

The corresponding associative submanifold is of the form $S^3\times \{0\}$ for  $0\in \mathbb{R}^4$.
Now, a compact spin manifold of positive scalar curvature admits no harmonic spinors (see e.g. \cite{lawson1989spin}) -- in fact, the same conclusion holds if the scalar curvature is just nonnegative and somewhere positive
-- but McLean showed in \cite{mclean1996deformations} that the moduli space of associative deformations at $Y$ is the space of harmonic twisted spinors on $Y$, that is, the kernel of its Dirac operator. \

Observe that the normal bundle of $S^3\times \{ 0\}$ is isomorphic to the spinor bundle of $S^3$. In general, let $Y^3$ be an oriented Riemannian manifold and $\pi: P_{\SO}(Y)\rightarrow Y$ the frame bundle of oriented isometries. Now let $\xi: P_{\spin}(Y)\rightarrow P_{\SO}(Y)$ be the $\spin$ double cover of the bundle $P_{\SO}(Y)$. The normal bundle of $S^3$ (as a submanifold of $S^3\times \mathbb{R}^4$) can be written as an associated bundle $NS^3=P_{SO}(S^3)\times_{\SO(4)}\mathbb{H}$, via the representation $\varrho: \SO(4)\rightarrow \Gl(\mathbb{H})$, $\varrho([p,q])(v)=pv\bar{q}$. Now, the spinor bundle of $S^3$ can be written as $\mathbf{S}(S^3)=P_\spin(S^3)\times_{\spin(3)}\mathbb{H}$ via the inclusion $\iota^-: p\in \spin(3)\hookrightarrow (1,p)\in \spin(4)$ and the representation $\varsigma: \spin(3)\times \spin(3)\rightarrow \Gl(\mathbb{H})$, $\varsigma(p,q)(v)=v\bar{q}$. So the identification $\mathbb{R}^4\cong \mathbb{H}$ gives a bundle map 
$$
\Phi: \mathbf{S}(S^3) \rightarrow NS^3
$$
by $\Phi(\widetilde{p},v)=(\xi(\widetilde{p}),v)$. Observe that $\Phi$ is well-defined:
\begin{align*}
\Phi(pg^{-1},\varsigma(1,g)(v))&=(\xi(p\cdot g),v\bar{g})\\
                                        &=(\xi(p)\xi_0(g),\varrho([1,g])(v))\\
                                        &=(\xi(p),v)\cdot g=\Phi(p,v).
\end{align*}
It is easy to check that $\Phi$ is  a bundle isomorphism.

In those terms, we obtain a trivial alternative for Gayet's proof of rigidity
of
 $S^3$ \cite{gayet2014smooth}, using Theorem \ref{rigidity_theorem} and the fact that $\tau_0=0$.

\begin{prop}
Let $S^3\times \mathbb{R}^4$ be the $\gt$--manifold with the metric \eqref{metricbs}.  Then $S^3\cong S^3\times \{0\}$ is rigid as an associative submanifold. \begin{proof}
Since the normal bundle of $S^3\times \{0\}$ coincides with the Spin bundle $\mathbf{S}(S^3)$, the term $\overline{\rho}(F^-)$ vanishes, and we conclude
immediately from Theorem \ref{rigidity_theorem}.
\end{proof}
\end{prop}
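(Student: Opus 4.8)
The plan is to deduce this from the vanishing criterion of Theorem \ref{rigidity_theorem}, applied in the torsion-free regime $\tau_0=0$. First I would observe that the Bryant--Salamon metric \eqref{metricbs} has holonomy exactly $G_2$, hence $\nabla\varphi=0$, so all torsion forms vanish (in particular $\tau_0=0$), $a=A-A_0=0$, $\fueter=\dirac$, and $P_1=P_2=P_3=0$ in the Weitzenböck formula \eqref{fueter_dirac_formula}. It then remains to check two things: that the curvature term $\overline\rho(F^-)$ vanishes, and that $\tfrac{k}{4}-\tfrac{3}{16}\tau_0^2>0$. The latter is immediate: $S^3$ is closed, $S^3\times\{0\}$ is associative by the Bryant--Salamon theorem, and the metric induced on the zero section $\{r=0\}$ from \eqref{metricbs} is $3\,ds^2$, which has constant sectional curvature $\tfrac13$ and hence scalar curvature $k=2$, so $\tfrac{k}{4}-\tfrac{3}{16}\tau_0^2=\tfrac12>0$.

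The main obstacle is showing $\overline\rho(F^-)=0$. The bundle isomorphism $\Phi\colon\mathbf S(S^3)\to NS^3$ constructed above identifies the normal bundle of $S^3\times\{0\}$, with its induced connection $\nperp$, with the spinor bundle of $S^3$ carrying its spin connection. Under the identification $NY\cong\real(S^+\otimes_{\mathbb C}S^-)$ from Proposition \ref{prop: NY=S+ x S-}, together with $\mathfrak{su}(S^\pm)\cong\Lambda^2_\pm(NY)$ and the tie $\Lambda^2_+(NS^3)\cong TS^3$ from Lemma \ref{lemma: TY=Lambda+}, the spinor bundle $\mathbf S(S^3)=P_\spin(S^3)\times_{\spin(3)}\mathbb H$ is associated to $P_\spin(S^3)$ via the inclusion $\iota^-\colon\spin(3)\hookrightarrow\spin(3)\times\spin(3)=\spin(4)$ and the representation $\varsigma(p,q)(v)=v\bar q$, which is trivial on the factor matched to $S^-$. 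Hence the induced connection on the $S^-$-component of $NS^3$ reduces to the trivial (flat) connection, so $F^-=0$ and $\overline\rho(F^-)=0$ by \eqref{eq: rho barra}. The delicate point here is purely one of bookkeeping with the $\spin(4)=\spin(3)\times\spin(3)$ splitting: one must check that the nonzero curvature of $\mathbf S(S^3)$ is carried entirely by $S^+$ (the factor tied to $TS^3$) and not by $S^-$.

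Granting this, I would finish as in the proof of Theorem \ref{rigidity_theorem} with $\tau_0=0$: the formula \eqref{fueter_dirac_formula} collapses to $\fueter^2\sigma=\nabla^\ast\nabla\sigma+\tfrac14 k\,\sigma$, so for any $\sigma\in\ker\fueter$, integration over $S^3$ and Stokes' theorem give $\int_{S^3}\bigl(\tfrac14 k\,|\sigma|^2+|\nperp\sigma|^2\bigr)\,d\vol_Y=0$; since $k=2>0$ this forces $\sigma=0$, so $\ker\fueter=\{0\}$ and $S^3\cong S^3\times\{0\}$ is rigid.
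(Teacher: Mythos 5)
Your proposal is correct and follows essentially the same route as the paper: identify $NS^3$ with $\mathbf S(S^3)$ via the bundle map $\Phi$, observe that the representation $\varsigma$ only involves one $\spin(3)$ factor so that $F^-=0$, and then invoke the vanishing argument of Theorem \ref{rigidity_theorem} with $\tau_0=0$. You merely fill in details the paper leaves implicit — the explicit value $k=2$ of the scalar curvature of the zero section and the Bochner integration — and you correctly flag the one genuinely delicate point (that the spin curvature lands in the $S^+$ rather than the $S^-$ factor), which the paper also asserts without further checking.
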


\subsection{Locally conformal calibrated case and applications}

As an application of the Fueter-Dirac Weitzenb\"ock formula (\ref{fueter_dirac_formula}) and Proposition \ref{alternative_expression}, we focus on \emph{locally conformal calibrated} $\gt$--structures, whose associated metric is (at least locally) conformal  to a metric induced by a calibrated  $\gt$--structure. We provide a novel  example of a rigid associative submanifold, inside a compact manifold $S$ with a locally conformal calibrated $\gt$--structure, studied by Fernández, Fino and Raffero \cite{fernandez2016locally}. 

\begin{definition}
A $\gt$--structure is  \emph{locally conformal calibrated} if it has vanishing torsion components $\tau_0\equiv 0$ and $\tau_3\equiv 0$, so 
\begin{align*}
  d\varphi &=3\tau_1\wedge\varphi,\\
  d\psi    &=4\tau_1\wedge\psi+\tau_2\wedge\varphi.
\end{align*}
\end{definition}

A $\SU(3)$--\emph{structure} on a $6$-manifold $N$ is a pair  $(\omega,\phi_+)\in \Omega^2(N)\times\Omega^3(N)$ such that $\phi_+=\frac{1}{2}(\Omega+\bar{\Omega})$, where $\Omega\in \Omega^0(\Lambda^3(T^\ast N\otimes \mathbb{C}))$ is a decomposable complex $3$-form and 
$$
\omega\wedge\phi_+=0 \qandq \frac{\omega^3}{6}=\frac{i}{8}\Omega\wedge\bar{\Omega}=\frac{1}{4}\phi_+\wedge\phi_- \qwithq \phi_-:=\frac{1}{2i}(\Omega-\bar{\Omega}). 
$$
The $\SU(3)$--structure $(\omega,\phi_+)$ is said to be \emph{coupled} if $d\omega=c\phi_+$ with $c$ a non-zero real number. So, the product manifold $N\times S^1$ has a natural locally conformal calibrated $\gt$--structure defined by
$$
  \varphi=\omega\wedge dt +\phi_+,
$$
with $\tau_0\equiv 0, \tau_3\equiv 0$ and $\tau_1=-\frac{c}{3}dt$.  

\begin{example}\cite[Example 3.3]{fernandez2016locally}
\label{solv_example} 
Consider the $6$--dimensional Lie algebra $\mathfrak{n}_{28}$, and let $\{e_1,...,e_6\}$ be a $\SU(3)$--basis. With respect to the dual basis $\{e^1,...,e^6\}$, the structure equations of $\mathfrak{n}_{28}$ are 
\begin{equation}\label{Heisenberg_structure_eq}
  (0,0,0,0,e^{13}-e^{24},e^{14}+e^{23}),
\end{equation}
and we denote its components by $de^i:=0$, for $i=1,\dots,4$, $de^5:=e^{13}-e^{24}$ and $de^6:=e^{14}+e^{23}$. The pair 
\begin{equation}\label{h_su3-structure}
  \omega=e^{12}+e^{34}-e^{56} \qandq \phi_+=e^{136}-e^{145}-e^{235}-e^{246}
\end{equation}
defines a coupled $SU(3)$--structure on $\mathfrak{n}_{28}$ with $d\omega=-\phi_+$. Denote by $G$ the $3$-dimensional complex Heisenberg group with Lie algebra $\Lie(G)=\mathfrak{n}_{28}$ given by
$$
G=\Bigg\{\begin{pmatrix}
1 & z_1 & z_3 \\
0 &  1  & z_2 \\
0 &  0  &  1
\end{pmatrix}; \ \ z_1,z_2,z_3\in \mathbb{C} \Bigg\}.
$$
The structure equations \eqref{Heisenberg_structure_eq} can be rewritten as
$$
  dz_1=e^1+ie^2, \ \ \ dz_2=e^3+ie^4 \ \ \ dz_3+z_1dz_2=e^5+ie^6.
$$
By \cite[Theorem 7]{mal1949class}, $G$ admits a uniform discrete subgroup   $\Gamma\subset G$, i.e., a discrete subgroup such that $\Gamma\backslash G$ is compact,  the elements of which have $z_1,z_2,z_3\in\mathbb{Z}[i]$. The left-invariant forms $\omega$ and $\phi_+$ on $G$ are well defined in the quotient $\Gamma\backslash G$. Consider the automorphism $\nu: G\rightarrow G$  defined by
$$
\begin{pmatrix}
1 & z_1 & z_3 \\
0 &  1  & z_2 \\
0 &  0  &  1
\end{pmatrix} \xrightarrow[{  }]{\nu} \begin{pmatrix}
1 & iz_1 & z_3 \\
0 &  1  & -iz_2 \\
0 &  0  &  1
\end{pmatrix},
$$
and denote by $\Diff_\nu:=\left\langle (p,t)\mapsto (\nu(p),t+1)\right\rangle$ the infinite cyclic subgroup of diffeomorphisms of $(\Gamma\backslash G)\times \mathbb{R}$. The manifold 
$$
S=\Big( (\Gamma\backslash G)\times \mathbb{R}\Big)/\Diff_\nu
$$ 
is endowed with a locally conformal calibrated $\gt$--structure as follows: for the left-invariant coframe given in \eqref{Heisenberg_structure_eq}, we have
$$
\nu^\ast(e_1)=-e_2, \ \nu^\ast(e_2)=e_1, \ \nu^\ast(e_3)=e_4, \ \nu^\ast(e_4)=-e_3, \ \nu^\ast(e_5)=e_5, \ \nu^\ast(e_6)=e_6.
$$
Hence $\nu^\ast\omega=\omega$ and $\nu^\ast\phi_+=\phi_+$, for $(\omega,\phi_+)$ defined in \eqref{h_su3-structure}. Denoting by  $p_1: (\Gamma\backslash G)\times\mathbb{R}\rightarrow \Gamma\backslash G$  the projection onto the first factor, the forms $p_1^\ast \omega\in \Omega^2((\Gamma\backslash G)\times \mathbb{R})$ and $p_1^\ast \phi_+\in\Omega^3((\Gamma\backslash G)\times \mathbb{R})$ are invariant under $\sim_\nu$. Therefore, we have differential forms $\widetilde{\omega}\in\Omega^2(S)$ and $\widetilde{\phi}_+\in\Omega^3(S)$ satisfying the same relations as $(\omega,\phi_+)$ from \eqref{h_su3-structure}.
In this setup, the $3$-form
\begin{equation}\label{lcc_g2_structure}
  \widetilde{\varphi}=\widetilde{\omega}\wedge e^7+\widetilde{\phi}_+
\end{equation}
defines a locally conformal calibrated $\gt$-structure on $S$. Here $e^7$ denotes the pullback of the canonical closed $1$-form on $\mathbb{R}$ by the projection $p_2:(\Gamma\backslash G)\times\mathbb{R}\rightarrow \mathbb{R}$.
The torsion forms of $\widetilde{\varphi}$ are
$$
  \tau_1=\frac{1}{3}e^7, \quad \tau_2=\widetilde{\alpha} \qwhereq \alpha=-\frac{4}{3}\bigg(e^{12}+e^{34}+ 2e^{56}\bigg)
$$
and, by Proposition \ref{prop: full torsion tensor}, the full torsion tensor
is$$
T=\widetilde{\beta}, \qwithq \beta=e^{12}+e^{34}+e^{56}.
$$

%Therefore, $S$ is a non--compact manifold endowed with a left--invariant locally conformal calibrated $\gt$--structure.
\end{example}

The $7$-manifold from Example \ref{solv_example} contains an associative submanifold, corresponding to  a particular Lie subalgebra:

\begin{example}\label{lie_associative_subgroup}
Consider the Abelian subalgebra $\mathfrak{n}_{28}^'=\Span(e_5,e_6)\subset \mathfrak{n}_{28}$ and its respective Lie group $G^'=[G,G]= \exp (\mathfrak{n}_{28}^')\subset G$,  which is generated by the commutator $[g,h]=ghg^{-1}h^{-1}$. Since $G'$ is obtained as the maximal integral submanifold of $G$ given by the left-invariant distribution
$$
  \Delta(g)=(dL_g)_1\mathfrak{n}_{28} \qforq g\in G,
$$
i.e. $(L_h)_\ast(\Delta(g))\subset \Delta(hg)$ (c.f. \cite[Theorem 6.5]{sanmartingruposdeLie}), we get an integral distribution $\bar{\Delta}$ on $\Gamma\backslash G$. Representing $G'$ by 
$$
G'=\Bigg\{\begin{pmatrix}
1 &  0  & z_3 \\
0 &  1  &  0  \\
0 &  0  &  1
\end{pmatrix}; \ \ z_3\in \mathbb{C} \Bigg\},
$$
we see that, for each $p=\Gamma g'\in \Gamma\backslash G'$, we have $T_p(\Gamma\backslash G')=\bar{\Delta}(\Gamma g')$, and so $\Gamma\backslash G'$ is a compact embedded submanifold of $\Gamma\backslash G$.
Now $\nu|_{G'}=Id$ and the quotient map $(\Gamma\backslash G)\times \mathbb{R}\rightarrow S$ is a local diffeomorphism, so  
$$
  Y=\bigg( (\Gamma\backslash G')\times \mathbb{R}\bigg)/\Diff_\nu \cong (\Gamma\backslash G')\times S^1
$$
is a compact embedded submanifold of $S$.
Moreover,
$$
  T_{(p,t)}Y=T_p(\Gamma\backslash G')\oplus T_t \mathbb{R} \cong \mathfrak{n}_{28}'\oplus \mathbb{R},
$$ 
and indeed $\widetilde{\varphi}|_{T_pY}\equiv \vol(e_5,e_6,e_7)$. Hence, $Y$ is a closed associative submanifold of $S$.
\end{example}

Now, we  assess formula \eqref{fueter_dirac_operator} of Section \ref{1st_corrections} for  Example \ref{lie_associative_subgroup}. The first correction term is
\begin{align*}
P_1(\sigma)&=-T_{56}e_5\times\nabla_6^\perp\sigma-T_{65}e_6\times\nabla_5^\perp\sigma-2T_{56}\nabla_7^\perp\sigma\\
           &=-(e_7\times e_6)\times\nabla_6^\perp\sigma-(e_7\times e_5)\times\nabla_5^\perp\sigma-2\nabla_7^\perp\sigma\\
           &= e_7\times \dirac (\sigma)-\nabla_7^\perp\sigma.
\end{align*}
Here, to obtain the second equality we used the associative relation $e_5\times e_6=-e_7$ and for the last one we used the identity $(u\times v)\times w=-u\times(v\times w)$, for  mutually orthonormal $u,v,w$.
To calculate $P_2$, we need the covariant derivative of the total torsion tensor $T$
\begin{equation}\label{derivative_torsion_tensor}
  \nabla_i T_{kl}=e_i(T_{kl})-\Gamma_{ik}^mT_{ml}-\Gamma_{il}^mT_{km}=-\Gamma_{ik}^mT_{ml}-\Gamma_{il}^mT_{km}.
\end{equation}
Since $S$ is locally isometric to $G\times \mathbb{R}$,  the Christoffel symbols of the   $\gt$-metric on $S$ are defined by the structure constants of the Lie algebra $\mathfrak{n}_{28}$ (cf. \cite{milnor1976curvatures}):
$$
  \Gamma_{ij}^k=\frac{1}{2}(\alpha_{ijk}-\alpha_{jki}+\alpha_{kij}) \qwithq \alpha_{ijk}=\langle [e_i,e_j],e_k\rangle.
$$
Applying this to Example \ref{solv_example}, we find
\begin{align*}
        \Gamma_{13}^5 = \Gamma_{23}^6 = \Gamma_{36}^2 = \Gamma_{42}^5 =
        \Gamma_{63}^2 = \Gamma_{52}^4 &= 
        -\frac{1}{2}\\
        \Gamma_{14}^6 = \Gamma_{25}^4 = \Gamma_{35}^1 = \Gamma_{46}^1 =
        \Gamma_{64}^1 = \Gamma_{53}^1 
        &=-\frac{1}{2}       
        \\
        \Gamma_{16}^4 = \Gamma_{24}^5 = \Gamma_{31}^5 = \Gamma_{41}^6 =
        \Gamma_{61}^4 = \Gamma_{51}^3  
        &= +\frac{1}{2}\\
        \Gamma_{15}^3 = \Gamma_{26}^3 = \Gamma_{32}^6 = \Gamma_{45}^2 =
        \Gamma_{62}^3 = \Gamma_{54}^2 
        &=+\frac{1}{2}
        \\
        \\\Gamma_{ij}^k&=0, \; \text{otherwise}.
\end{align*}
Using the cross product defined by \eqref{lcc_g2_structure} and the above Christoffel symbols, we have:
\begin{equation}\label{connection_endomorphism}
\nabla_le_{i+5}= \nabla_{i+5} e_l=\frac{(-1)^i}{2}e_{6-i}\times e_l \qforq i=0,1 \qandq l=1,2,3,4.
\end{equation}
Notice that the full torsion tensor of the $\gt$--structure \eqref{lcc_g2_structure} can be written as
\begin{equation}\label{Torsion_endomophism}
T(u,v)=-\langle e_7\times u^\top,v^\top\rangle+\langle e_7\times u^\perp,v^\perp\rangle  \qforq u,v\in \Omega^0(TS|_Y)=\Omega^0(TY)\oplus \Omega^0(NY),
\end{equation}
where $u^\top$ and $u^\perp$ are the tangent and  normal components of $u$, respectively. Combining these facts with Lemma \ref{Leibniz_rule} $(i)$, we have
\begin{align}\label{Leibniz_rule-lcc}
\begin{split}
\nabla_u(v\times w)&=\nabla_uv\times w+v\times\nabla_uw+\sum_{i=1}^7T(u,e_m)\chi(e_m,v,w)\\
                   &=\nabla_uv\times w+v\times\nabla_uw-\chi(e_7\times u^\top,v,w)+\chi(e_7\times u^\perp,v,w).
\end{split}
\end{align}
Now, for $P_2$ we obtain:
\begin{align*}
P_2(\sigma)&=\sum_{i=5}^7\sum_{k=1}^4e_i(T(\sigma,e_k))e_i\times e_k\\
           &=\sum_{i=5}^7\sum_{k=1}^4e_i\times\Big( \nabla_i^\perp(T(\sigma,e_k)e_k)-T(\sigma,e_k)\nabla_i^\perp e_k\Big)\\
           &=\sum_{i=5}^7e_i\times \nabla_i^\perp(e_7\times\sigma)-\sum_{i=0,1}\sum_{k=1}^4\langle e_7\times\sigma,e_k\rangle\frac{(-1)^i}{2}e_{i+5}\times(e_{6-i}\times e_k)\\
           &=\sum_{i=5}^7e_i\times (e_7\times\nabla_i^\perp\sigma)-e_i\times\chi(e_7\times e_i,e_7,\sigma)-\sum_{i=0,1}\frac{(-1)^i}{2}e_{i+5}\times(e_{6-i}\times(e_7\times\sigma) )\\
           &=-2\nabla_7^\perp\sigma+\sum_{i=5}^7-e_7\times (e_i\times\nabla_i^\perp\sigma)\\
            &\quad-\underbrace{\sum_{i=0,1}e_{i+5}\times\chi(e_7\times e_{i+5},e_7,\sigma)+\frac{(-1)^i}{2}(e_{i+5}\times e_{6-i})\times(e_7\times\sigma)}_{(\star)}\\
           &=-e_7\times\dirac(\sigma)-2\nabla_7^\perp\sigma-3\sigma
\end{align*}
For the third equality, we used \eqref{Torsion_endomophism} in the first term and \eqref{connection_endomorphism}  in the second one. The fourth equality follows from \eqref{Leibniz_rule-lcc} and, finally, a short calculation gives:
\begin{align*}
        (\star)&=\sum_{i=0,1}-e_{i+5}\times((e_7\times e_{i+5})\times(e_7\times\sigma))+\frac{(-1)^i}{2}(e_{i+5}\times e_{6-i})\times(e_7\times\sigma)\\
               &=\sum_{i=0,1}-((e_{i+5}\times e_7)\times e_{i+5})\times(e_7\times\sigma)+\frac{(-1)^i}{2}(e_{i+5}\times e_{6-i})\times(e_7\times\sigma)\\
               &=-((e_5\times e_7)\times e_5)\times(e_7\times\sigma)+\frac{1}{2}(e_5\times e_6)\times(e_7\times\sigma)\\
                & \quad-((e_6\times e_7)\times e_6)\times(e_7\times\sigma)-\frac{1}{2}(e_6\times e_5)\times(e_7\times\sigma)\\
               &= \sigma+\frac{1}{2}\sigma +\sigma+\frac{1}{2}\sigma=3\sigma. 
\end{align*}
Finally, for $P_3$, we have
\begin{align*}
  P_3(\sigma)&=\sum_{k,l=1}^4\Big(T(\sigma,e_k)+\sum_{i=5}^7\widetilde{\varphi}(e_i,\nabla_i^\perp\sigma,e_k)\Big)T_{kl}e_l\\
             &=\sum_{k=1}^4\Big(\langle e_7\times \sigma,e_k\rangle+\sum_{i=5}^7\langle e_i\times\nabla_i^\perp\sigma,e_k\rangle\Big)e_7\times e_k\\
             &= e_7\times(e_7\times\sigma)+e_7\times\dirac(\sigma)=-\sigma+e_7\times\dirac(\sigma)
\end{align*}

Now, writing the curvature tensor as
$$
R(e_i,e_j)e_k 
= 
\sum_{l,m=1}^7\Big(\Gamma_{jk}^l\Gamma_{il}^m-\Gamma_{ik}^l\Gamma_{jl}^m-(\Gamma_{ij}^l-\Gamma_{ji}^l)\Gamma_{lk}^m\Bigr)e_m
$$
and using the last expression, we have
\begin{align*}
  R(e_5,\sigma)e_5&=\sum_{l,m=1}^7\sum_{j=1}^4\sigma^j\Big(\Gamma_{j5}^l\Gamma_{5l}^m-\Gamma_{55}^l\Gamma_{jl}^m-(\Gamma_{5j}^l-\Gamma_{j5}^l)\Gamma_{l5}^m\Bigr)e_m\\
                  &=\sum_{l,m=1}^7\sum_{j=1}^4\sigma^j\Big(\Gamma_{j5}^l\Gamma_{5l}^m\Bigr)e_m\\
                  &=\sigma^1\Gamma_{15}^3\Gamma_{53}^1e_1+\sigma^2\Gamma_{25}^4\Gamma_{54}^2e_2+\sigma^3\Gamma_{35}^1\Gamma_{51}^3e_3+\sigma^4\Gamma_{45}^2\Gamma_{52}^4e_4=-\frac{\sigma}{4}.
\end{align*}
And,
\begin{align*}
R(e_6,\sigma)e_6&=\sum_{l,m=1}^7\sum_{j=1}^4\sigma^j\Big(\Gamma_{j6}^l\Gamma_{6l}^m-\Gamma_{66}^l\Gamma_{jl}^m-(\Gamma_{6j}^l-\Gamma_{j6}^l)\Gamma_{l6}^m\Bigr)e_m\\
        &=\sum_{l,m=1}^7\sum_{j=1}^4\sigma^j\Big(\Gamma_{j6}^l\Gamma_{6l}^m\Bigr)e_m\\
        &=\sigma^1\Gamma_{16}^4\Gamma_{64}^1e_1+\sigma^2\Gamma_{26}^3\Gamma_{63}^2e_2+\sigma^3\Gamma_{36}^2\Gamma_{62}^3e_3+\sigma^4\Gamma_{46}^1\Gamma_{61}^4e_4=-\frac{\sigma}{4}.
\end{align*}
Therefore,
\begin{align*}
\mathcal{R}(\sigma)&=\Bigl( R(e_5,\sigma)e_5+R(e_6,\sigma)e_6+R(e_7,\sigma)e_7\Bigr)^\perp
                   =-\frac{1}{4}\sigma-\frac{1}{4}\sigma+0\\
                   &=-\frac{1}{2}\sigma. 
\end{align*}
Now, we assess the operator $\mathcal{T}$ defined in equation \eqref{torsion_four_tensor} for a pair $e_i, e_j\in \Omega^0(TY)$ and $\sigma\in \Omega^0(NY)$:
\begin{align*}
\mathcal{T}(e_j,\sigma,e_i,e_j)=&\sum_{m=1}^7\underbrace{T(\sigma,e_m)\nabla_j\psi(e_m,e_i,e_j,\cdot)^\sharp}_{\mathrm{(I)}}-\underbrace{T(e_j,e_m)\nabla_\sigma\psi(e_m,e_i,e_j,\cdot)^\sharp}_{\mathrm{(II)}}\\
                         &+\underbrace{\Big(\nabla_jT(\sigma,e_m)-\nabla_\sigma T(e_j,e_m)\Big)\chi(e_m,e_i,e_j)}_{\mathrm{(III)}}.
\end{align*}
We will use throughout the proof both the expression of $\nabla\psi$ in terms of $T$ and $\varphi$ from Corollary \ref{covariant_psi} and the expression for $T$ given in \eqref{Torsion_endomophism}. For  the first term, 
\begin{align*}
\mathrm{(I)}
        &=\sum_{m=1}^7\langle e_7\times \sigma, e_m\rangle\nabla_j\psi(e_m,e_i,e_j,\cdot)^\sharp=\nabla_j\psi(e_7\times\sigma,e_i,e_j,\cdot)^\sharp\\
        &=-T(e_j,e_7\times\sigma)\varphi(e_i,e_j,\cdot)^\sharp+T(e_j,e_i)\varphi(e_7\times\sigma,e_j,\cdot)^\sharp-T(e_j,e_j)\varphi(e_7\times\sigma,e_i,\cdot)^\sharp\\
        &\quad+T(e_j,\cdot)^\sharp\varphi(e_7\times\sigma,e_i,e_j)\\
        &=-\langle e_7\times e_j,e_i\rangle(e_7\times \sigma)\times e_j=\langle e_7\times e_j,e_i\rangle(e_7\times e_j)\times \sigma.
\end{align*}
Here we used the vanishings $T(e_j,e_7\times\sigma)=0$, again by  \eqref{Torsion_endomophism}, $T(e_j,e_j)=0$, by skew-symmetry, and $\varphi(e_7\times\sigma,e_i,e_j)=\langle e_i\times e_j, e_7\times\sigma\rangle=0$, by orthogonality. 

For the second term,
\begin{align*}
\mathrm{(II)}=& \sum_{m=1}^7\langle e_7\times e_j, e_m\rangle\nabla_\sigma\psi(e_m,e_i,e_j,\cdot)^\sharp=\nabla_\sigma\psi(e_7\times e_j,e_i,e_j,\cdot)^\sharp\\
   =& -T(\sigma,e_7\times e_j)\varphi(e_i,e_j,\cdot)^\sharp+T(\sigma,e_i)\varphi(e_7\times e_j,e_j,\cdot)^\sharp-T(\sigma,e_j)\varphi(e_7\times e_j,e_i,\cdot)^\sharp\\
    & +T(\sigma,\cdot)^\sharp\varphi(e_7\times e_j,e_i,e_j)\\
   =& -\langle e_7\times\sigma,\cdot\rangle^\sharp\langle(e_7\times e_j)\times e_i,e_j\rangle=-\langle(e_7\times e_j)\times e_i,e_j\rangle e_7\times \sigma.
\end{align*}
Again the vanishings $T(\sigma,e_7\times e_j)=T(\sigma,e_i)=T(\sigma,e_j)=0$ follow from \eqref{Torsion_endomophism}. 

For the third term, we use expression  \eqref{derivative_torsion_tensor} for the derivatives of the torsion tensor:
\begin{align*}
\mathrm{(III)}=&-\sum_{m=1}^7\Big(T(\sigma,\nabla_je_m)-T(e_j, \nabla_\sigma e_m)\Big)\chi(e_m,e_i,e_j)\\
     =&-\sum_{m=1}^7\Big(\langle e_7\times\sigma,\nabla_je_m\rangle+\langle e_7\times e_j, \nabla_\sigma e_m\rangle\Big)\chi(e_m,e_i,e_j).
\end{align*}
We now apply $\mathrm{(I),(II)}$ and $(\mathrm{III)}$ for $i=5$ and $j=6$:
\begin{align*}
\mathcal{T}(e_6,\sigma,e_5,e_6)
        &= \langle e_7\times e_6,e_5\rangle(e_7\times e_6)\times\sigma+\langle (e_7\times e_6)\times e_5, e_6\rangle e_7\times\sigma\\
        &\quad -\sum_{m=1}^7\Big(\langle e_7\times\sigma,\nabla_6e_m\rangle+\langle e_7\times e_6,\nabla_\sigma e_m\rangle\Big)\chi(e_m,e_5,e_6)\\
        &= e_5\times\sigma-\sum_{m=1}^7\Big(-\frac{1}{2}\langle e_7\times\sigma,e_5\times e_m\rangle+\langle e_5,\nabla_\sigma e_m\rangle\Big)\chi(e_m,e_5,e_6)\\ 
        &= e_5\times\sigma-\sum_{m=1}^7\Big(\frac{1}{2}\langle e_5\times (e_7\times\sigma), e_m\rangle+\sigma\langle e_5,e_m\rangle-\langle\nabla_\sigma e_5,e_m\rangle\Big)\chi(e_m,e_5,e_6)\\
        &= e_5\times\sigma-\sum_{m=1}^7\Big(-\frac{1}{2}\langle e_6\times\sigma, e_m\rangle-\frac{1}{2}\langle e_6\times\sigma ,e_m\rangle\Big)\chi(e_m,e_5,e_6)\\
        &= e_5\times\sigma+\chi(e_6\times\sigma,e_5,e_6)=e_5\times\sigma-(e_6\times\sigma)\times(e_5\times e_6)\\
        &= e_5\times\sigma+(e_6\times\sigma)\times e_7\\
        &=2e_5\times \sigma.
\end{align*}
Here we used repeatedly that $e_5\times e_6=-e_7$ and $e_i\times(e_j\times\sigma)=-e_j\times(e_i\times\sigma)$ for $i\neq j$. At the second and fourth lines  we applied again \eqref{connection_endomorphism}, and at the third line we used the compatibility of the Riemannian connection.
 
For $j=7$ and $i=6$, we have trivially
\begin{align*}
\mathcal{T}(e_7,\sigma,e_6,e_7)=0.
\end{align*}
Finally, for $j=5$ and $i=7$, we have
\begin{align*}
\mathcal{T}(e_5,\sigma,e_7,e_5)
        &= \langle e_7\times e_5,e_7\rangle(e_7\times e_5)\times\sigma+\langle (e_7\times e_5)\times e_7, e_5\rangle e_7\times\sigma\\
        &\quad -\sum_{m=1}^7\Big(\langle e_7\times\sigma,\nabla_5e_m\rangle+\langle e_7\times e_5,\nabla_\sigma e_m\rangle\Big)\chi(e_m,e_7,e_5)\\
        &= \langle e_6, e_7\rangle e_6\times\sigma-\langle e_6\times e_7,e_5\rangle e_7\times\sigma-\sum_{m=1}^7\Big(\frac{1}{2}\langle e_7\times\sigma,e_6\times e_m\rangle\\
        &\quad -\langle e_6,\nabla_\sigma e_m\rangle\Big)\chi(e_m,e_7,e_5)\\
        &= e_7\times\sigma-\sum_{m=1}^7\Big(-\frac{1}{2}\langle e_6\times( e_7\times\sigma), e_m\rangle-\sigma\langle e_6, e_m\rangle\\
        &\quad +\langle \nabla_\sigma e_6,e_m\rangle\Big)\chi(e_m,e_7,e_5)\\
        &= e_7\times\sigma-\sum_{m=1}^7\Big(-\frac{1}{2}\langle e_5\times\sigma, e_m\rangle-\frac{1}{2}\langle e_5\times\sigma ,e_m\rangle\Big)\chi(e_m,e_7,e_5)\\
        &= e_7\times\sigma+\chi(e_5\times\sigma,e_7,e_5)= e_7\times\sigma -(e_5\times\sigma)\times(e_7\times e_5)\\
        &= e_7\times\sigma +(e_5\times\sigma)\times e_6=2e_7\times \sigma.
\end{align*}
Therefore, 
\begin{eqnarray*}
\biggl(\sum_{i\in\mathbb{Z}_3}e_{i+5}\times\mathcal{T}(e_{i+6},\sigma,e_{i+5},e_{i+6})\biggr)^\perp
        %&=&\sum_{k,j=1}^4\sum_{l,m=1}^7\sigma^k\biggl(\Bigl( \{ \Gamma_{km}^lT_{6l}-\Gamma_{6m}^lT_{kl} \}\psi_{m56j}\\
        %&&+T_{km}T_{65}\varphi_{m6j}\Bigr)e_5\times e_j+\Gamma_{k7}T_{lm}\psi_{m67j} e_6\times e_j\biggr)\\
        %&&+\Bigl(\{\Gamma_{km}^lT_{5l}-\Gamma_{5m}^lT_{kl}\}\psi_{m75j}-T_{5m}T_{kj}\varphi_{m75}\Bigr)e_7\times e_j\\
        %&=&\frac{9}{4}e_5\times(e_5\times \sigma)-\frac{1}{4}e_6\times(e_6\times\sigma)+\frac{9}{4}e_7\times(e_7\times\sigma)\\
        &=&-4\sigma.
\end{eqnarray*}
Following the notation of \cite[\S$5.3$]{corti1207g2}, we define an operator 
$$ 
  \dirac^c(\sigma):=e_5\times\nabla_5^\perp \sigma+e_6\times\nabla_6^\perp\sigma,
$$
and recall that the cross-product by $e_7$ defines an almost complex structure on $T(\Gamma\backslash G)$ denoted by $J(\sigma):=e_7\times \sigma$. Then \eqref{D_with_torsion} becomes
$$
  \fueter(\sigma)=\dirac^c(\sigma)+J(\dot{\sigma})+J(\sigma),
$$
where $\dot{\sigma}:=\n_7^\perp\sigma$. To simplify notation, let $\left\Vert \cdot \right\Vert$ and $\Langle \cdot ,\cdot\Rangle$ denote the $L^2$-norm and inner product of sections, respectively (the integral of the corresponding pointwise quantity over the associative submanifold). The next Lemma gathers some relations between the operators $\dirac$, $J$ and $\n$; although some of them will not be used in this article, we state them anyway as a curiosity.
\begin{lemma}\label{support_lemma }
With the above notation, we have the following properties: 
\begin{enumerate}
        \item[$\mathrm{(i)}$]
 $\dirac^c \circ J(\sigma)=-J\circ\dirac^c(\sigma)+2\sigma$.

        \item[$\mathrm{(ii)}$]
        $\Langle\dirac^c(\sigma),\eta\Rangle=\Langle\sigma,\dirac^c(\eta)\Rangle+2\Langle\sigma,J(\eta)\Rangle$.

        \item[$\mathrm{(iii)}$]
        $ \Langle\dirac^c(\sigma),J(\dot{\sigma})\Rangle=0$.  
        %-\frac{1}{2}\Langle\dirac^c(\sigma),J(\sigma)\Rangle$. 
        \item[$\mathrm{(iv)}$]
        $\Langle \dot{\sigma}, \sigma\Rangle = 0$ and $\Langle \dirac^c(\sigma), J(\sigma)\Rangle \leq 0$.
\end{enumerate}
\end{lemma}

\begin{proof}
\begin{enumerate}[(i)]
        \item
        Using Lemma \ref{Leibniz_rule} $(i)$, we have, 
\begin{eqnarray*}
\dirac^c\circ J(\sigma)&=&-J\circ \dirac^c(\sigma)-T_{65}e_6\times(e_5\times(e_7\times \sigma))-T_{56}e_5\times(e_6\times(e_7\times\sigma))\\
                       &=&-J\circ \dirac^c(\sigma)+2T_{56}(e_5\times e_6)\times (e_7\times \sigma)\\
                       &=&-J\circ\dirac^c (\sigma)+2\cdot \sigma. 
\end{eqnarray*}

        \item
\begin{eqnarray*}
\langle \dirac^c(\sigma),\eta\rangle_p &=& -\sum_{i=5}^6\langle \nabla_i^\perp \sigma, e_i\times \eta\rangle_p= -\sum_{i=5}^6 \{e_i\langle \sigma,e_i\times \eta\rangle-\langle\sigma,\nabla_i^\perp(e_i\times\eta)\rangle\}_p\\
                                       &=& \dv(\sigma\times \eta)_p +  -\sum_{i=5}^6 \langle\sigma, e_i\times\nabla_i^\perp \eta -\chi(e_7\times e_i,e_i,\eta) \rangle \}_p\\   
                                       &=& \dv(\sigma\times \eta)_p+\langle \sigma,\dirac^c(\eta)\rangle_p+2\langle \sigma, e_7\times \eta\rangle_p.                      
\end{eqnarray*}
Here we used the Leibniz rule  \eqref{Leibniz_rule-lcc}, then the following trivial calculation:
\begin{align*}
\chi(e_7\times e_i, e_i,\eta)=&\chi(\eta,e_7\times e_i,e_i)=-\eta\times\Big( (e_7\times e_i)\times e_i\Big)\\
                             =&-\eta\times\Big(e_i\times(e_i\times e_7)\Big)=-e_7\times\eta.
\end{align*}
        \item
        Using  (i) and (ii), one has $\Langle\dirac^c(\sigma),J(\dot{\sigma})\Rangle=\Langle J(\sigma),\dirac^c(\dot{\sigma})\Rangle$, and, by the vanishing of the normal curvature tensor $R^\perp(e_i,e_7)\sigma=0$ for $i=5,6$, we have $\nabla_i^\perp\nabla_7^\perp\sigma=\nabla_7^\perp\nabla_i^\perp\sigma$. Using  Lemma \ref{Leibniz_rule} (i) and the compatibility of $\nabla^\perp$ with the induced metric in $NY$we have
\begin{eqnarray*}
\langle\dirac^c(\sigma),J(\dot{\sigma})\rangle_p&=& \sum_{i=5}^7\langle J(\sigma),e_i\times\nabla_7^\perp\nabla_i^\perp\sigma\rangle_p\\
                                        &=& \sum_{i=5}^7\langle J(\sigma),\nabla_7^\perp(e_i\times\nabla_i^\perp\sigma)\rangle_p\\
                                        &=& -\langle \nabla_7^\perp(J(\sigma)),\dirac^c(\sigma)\rangle_p +e_7\langle J(\sigma),\dirac^c(\sigma)\rangle_p\\
                                        &=&-\langle J(\dot{\sigma}),\dirac^c(\sigma)\rangle_p +\dv(\langle J(\sigma),\dirac^c(\sigma)\rangle e_7)_p.
\end{eqnarray*}

        \item
       Again by compatibility of $\nperp$ with the metric on $NY$, we have $2\langle \dot{\sigma},\sigma\rangle=2\langle\n_7^\perp\sigma,\sigma\rangle=e_7|\sigma|^2$. Now Stokes' Theorem gives
        \begin{equation}\label{div_vanishes}
         \Langle \dot{\sigma},\sigma\Rangle=\frac{1}{2}\int_Ye_7|\sigma|^2d\vol_Y=\frac{1}{2}\int_Y\dv(|\sigma|^2e_7)d\vol_Y=0. 
        \end{equation}
        Computing the $L^2$-norm for $\fueter(\sigma)$, we have 
\begin{align*}
      \left\Vert\fueter(\sigma)\right\Vert^2 
      =&\left\Vert\dirac^c(\sigma)\right\Vert^2
        +\left\Vert\dot{\sigma}\right\Vert^2
        +\left\Vert\sigma\right\Vert^2
        +2\Langle\dirac^c(\sigma),J(\dot{\sigma})\Rangle+2\Langle\dirac^c(\sigma),J(\sigma)\Rangle
        +2\Langle\dot{\sigma},\sigma\Rangle,
\end{align*}
and from Lemma \ref{support_lemma }(iii) and equation \eqref{div_vanishes} it follows that 
        \begin{align*}
        \left\Vert\fueter(\sigma)\right\Vert^2 
        =&\left\Vert\dirac^c(\sigma)\right\Vert^2
        +\left\Vert\dot{\sigma}\right\Vert^2
        +\left\Vert\sigma\right\Vert^2
        +2\Langle\dirac^c(\sigma),J(\sigma)\Rangle.
        \end{align*}
Therefore, by the triangle inequality,     
\begin{align*}
        \Langle\dirac^c(\sigma),J(\sigma)\Rangle\leq 0. 
\end{align*}
\end{enumerate}
\end{proof}

\begin{corollary}\label{cor: rigid assoc Fino-Raffero}
The submanifold $Y$ of Example \ref{lie_associative_subgroup} is rigid. 
\begin{proof}
Notice that the operator $\mathcal{B}$ vanishes on $Y$, as can be seen from
\begin{eqnarray*}
\mathcal{B}(\sigma)&=&\sum_{i,j=5}^7 (e_i\times e_j)\times B(e_j,S_\sigma(e_i))\\
                   &=&\sum_{k=1}^4\sum_{i,j=5}^7\langle S_{e_k}(e_j),S_\sigma(e_i)\rangle (e_i\times e_j)\times e_k\\
                   &=&-\sum_{k=1}^4\sum_{i,j,l=5}^7 \Gamma_{jk}^l\langle e_l, S_\sigma(e_i) \rangle (e_i\times e_j)\times e_k=0,
\end{eqnarray*}
since, $\Gamma_{jk}^l=0$ for  $j,l=5,6,7$ and $k=1,...,4$. Applying equation \eqref{fueter_dirac_formula}, Lemma \ref{alternative_weitzenbock_formula} and the previous calculation, we obtain the Weitzenböck formula
$$
  \fueter^2(\sigma)=\nabla^\ast\nabla\sigma+e_7\times\dirac(\sigma)-3\nabla_7^\perp\sigma-\frac{1}{2}\sigma.
$$
Taking the inner product with $\sigma$ and integrating over $Y$, 
\begin{align*}
\int_Y\langle\fueter^2(\sigma),\sigma\rangle d\vol_Y=&\int_Y\langle\nabla^\ast\nabla\sigma,\sigma\rangle d\vol_Y+\int_Y\langle e_7\times\dirac(\sigma),\sigma\rangle d\vol_Y-\int_Y3\langle \nabla_7^\perp\sigma,\sigma\rangle d\vol_Y\\
                                                     &-\int_Y\frac{1}{2}\langle\sigma,\sigma\rangle d\vol_Y\\
                                                    \geq& \int_Y\langle e_7\times\dirac(\sigma),\sigma\rangle d\vol_Y-3\int_Y \langle\dot{\sigma},\sigma\rangle d\vol_Y-\int_Y\frac{1}{2}\langle\sigma,\sigma\rangle d\vol_Y.
\end{align*}
From Lemma \ref{support_lemma } (iv), we conclude that 
\begin{align}\label{weitzenbock_example}
        \int_Y\langle\fueter^2(\sigma),\sigma\rangle d\vol_Y\geq& \int_Y\langle e_7\times\dirac(\sigma),\sigma\rangle d\vol_Y-\frac{1}{2}\int_Y\langle\sigma,\sigma\rangle d\vol_Y.
\end{align}
So, for $\sigma\in \ker\fueter$, we have $\dirac(\sigma)=-e_7\times \sigma$ and, replacing that in \eqref{weitzenbock_example}, we get the inequality
$$
  0\geq -\int_Y\langle e_7\times(e_7\times\sigma),\sigma\rangle d\vol_Y-\frac{1}{2}\int_Y\langle\sigma,\sigma\rangle d\vol_Y=\frac{1}{2}\int_Y\langle\sigma,\sigma\rangle d\vol_Y.
$$
Then $\sigma=0$ and therefore $Y$ is rigid. 
%\begin{equation}\label{weitzenbock_example}
%  \Langle\fueter^2(\sigma),\sigma \Rangle\rangle=\Langle\nabla^\ast\nabla \sigma, \sigma\Rangle+\frac{11}{4}\Langle\sigma,\sigma \Rangle+2\Langle e_7\times\dirac(\sigma),\sigma\Rangle-3\Langle\nabla_7^\perp\sigma,\sigma\Rangle.
%\end{equation}
%By Lemma \ref{support_lemma } $(iv)$, we know that $\Langle\nabla_7^\perp\sigma,\sigma \Rangle\leq 0$, so for  $\sigma\in \ker \fueter$ one has $\dirac(\sigma)=-\frac{1}{2}e_7\times\sigma$ and \eqref{weitzenbock_example} gives $0\geq \frac{15}{4}\left\Vert\sigma\right\Vert\geq 0$. Consequently $\sigma=0$.
\end{proof}
\end{corollary}

\section*{Afterword}

In many cases a Weitzenböck formula is a useful tool to rule out parallel spinors, but in full generality equation \eqref{fueter_dirac_formula} has the drawback of first order terms with unpredictable spectrum. In  the nearly parallel case \eqref{nearly_parallel_case}, however, the Weitzenböck formula is very similar to the formula for a parameterized connection with skew-torsion symmetric tensor of  Agricola and Friedrich \cite{friedrich2004holonomy}. In this context and under favourable assumptions, it is possible to control the spectrum of $\dirac$. Using the Weitzenböck formula of \cite{gayet2014smooth}, we have 
$$
  \fueter^2=\nabla^\ast\nabla+\mathcal{R}+\mathcal{B}+\tau_0\dirac+\frac{\tau_0^2}{4}
$$
so, when a normal section lies in $\ker \fueter$, it corresponds to the eigenvalue $\frac{\tau_0}{4}$ of $\dirac$. Therefore, nontrivial deformations for an associative submanifold are in direct correspondence with elements of that eigenspace.   

\begin{bibdiv}

\begin{biblist}

\bibselect{preprint}

\end{biblist}

\end{bibdiv}

%\bibliography{preprint.bib}
%\printbibliography[title=Bibliography]

%\addtocontents{toc}{\protect\end{multicols}}
\end{document}